\newtheorem{thm}{Theorem}[section]
\newtheorem{cor}[thm]{Corollary}
\newtheorem{lem}[thm]{Lemma}
\newtheorem{prop}[thm]{Proposition}
\theoremstyle{definition}
\newtheorem{rem}[thm]{Remark}
\newtheorem{defi}[thm]{Definition}
\newcommand{\supp}[1]{\operatorname{supp} #1}
\newcommand{\Ann}[1]{\operatorname{Ann}(#1)}
\newcommand{\K}{\mathbb{K}}
\newcommand{\R}{\mathbb{R}}
\newcommand{\Q}{\mathbb{Q}}
\newcommand{\N}{\mathbb{N}}
\newcommand{\C}{\mathbb{C}}
\newcommand{\Kt}{\widetilde{\K}}
\newcommand{\Rt}{\widetilde{\R}}
\newcommand{\Cinf}{\mathcal{C}^{\infty}}
\newcommand{\A}{\mathcal{A}}
\newcommand{\EM}{\mathcal{E}_M}
\newcommand{\Neg}{\mathcal{N}}
\newcommand{\eps}{\varepsilon}
\numberwithin{equation}{section} 
\newcommand{\Gco}{\mathcal{G}_{co}}
\newcommand{\Gsm}{\mathcal{G}_{sm}}
\newcommand{\comp}{\subset\subset}
\newcommand{\cinfty}{{\mathcal C}^\infty}
\newcommand{\Emco}{\mathcal{E}_{M,co}}
\newcommand{\Emsm}{\mathcal{E}_{M,sm}}
\newcommand{\Nco}{\mathcal{N}_{co}}
\newcommand{\Nsm}{\mathcal{N}_{sm}}
\newcommand{\G}{\mathcal{G}}
\newcommand{\Kco}{\widetilde{\K}_{co}}
\newcommand{\Ksm}{\widetilde{\K}_{sm}}
\newcommand{\Em}{\mathcal{E}_M}
\newcommand{\Rsm}{\widetilde{\R}_{sm}}
\newcommand{\Csm}{\widetilde{\C}_{sm}}
\newcommand{\Rco}{\widetilde{\R}_{co}}
\newcommand{\Cco}{\widetilde{\C}_{co}}
\newcommand{\co}{\mathcal{C}}
\title{Algebras of generalized functions with smooth parameter dependence}
\author{Annegret Burtscher \footnote{Electronic mail: annegret.burtscher@univie.ac.at, 
supported by research stipend FS 506/2010 of the University of Vienna
}\\ 
        Michael Kunzinger \footnote{Electronic mail: michael.kunzinger@univie.ac.at, supported by START-project Y237
        and FWF-project P20525 of the Austrian Science Fund}\\
         {\small Department of Mathematics, University of Vienna}\\
         {\small Nordbergstr. 15, A-1090 Wien, Austria}\\
}
\begin{document}
\date{}
\maketitle

\begin{abstract}

We show that spaces of Colombeau generalized functions with smooth
parameter dependence are isomorphic to those with continuous
parametri\-za\-tion. Based on this result we initiate a systematic study of
algebraic properties of the ring $\Ksm$ of generalized numbers in this
unified setting. In particular, we investigate the ring and order
structure of $\Ksm$ and establish some properties of its ideals.
\medskip\\
\noindent{\footnotesize {\bf Mathematics Subject Classification (2010):}\\
 Primary: 46F30; secondary:13J25,46T30}
  
 \noindent{\footnotesize {\bf Keywords} Algebras of generalized functions,
 Colombeau algebras, smooth parame\-tri\-zation, algebraic properties}
\end{abstract}

\section{Introduction}

Algebras of generalized functions, in particular Colombeau algebras are a
versatile tool for studying singular problems in analysis, geometry and
mathematical physics (cf., e.g., \cite{C84,C85,NPS98,GKOS2001}). 
Over the past decade, there has
been increased interest in the structural theory of such algebras, in
particular concerning topological and functional analytic aspects of the theory
(e.g., \cite{S0,S,G1,G2,GV}).
Furthermore, starting with the fundamental paper \cite{AJ01}, algebraic
properties, both of the ring of Colombeau generalized functions and of
Colombeau algebras have become a main line of research (\cite{AJ01,AJOS08,V09,V10}). 

From the very outset, certain questions of an algebraic nature have played
an important role in Colombeau theory. Among them is the solution of
algebraic equations in generalized functions. In the standard (special or full) 
version of the theory, polynomials have additional roots when
considered as generalized functions. These roots are obtained by mixing
classical roots. For example, apart from its classical solutions $\pm 1$,
the equation $x^2=1$ additionally has the generalized root given by the
equivalence class of $(x_\eps)_\eps$ with $x_\eps = 1$ for $\eps\in \Q$
and $x_\eps=-1$ for $\eps\not\in \Q$. Usually, such additional roots are an
unwanted phenomenon (cf.\ the discussion in \cite{B90}, Ch.\ 1.10). They can be
avoided by demanding continuous dependence of representatives on the
regularization parameter $\eps$ (see \cite[Prop.\ 12.2]{O92}). More
generally, one can show that algebraic equations only possess classical
solutions in a setting with continuous parameter dependence (\cite{MP}).

Apart from avoiding pathological solutions of algebraic equations, there
are a number of intrinsic reasons for studying Colombeau spaces with
continuous or smooth parameter dependence. To begin with, when considering
full versions of the construction smooth in the test function variable, 
as done e.g.,  in \cite{C84, GKOS2001}, smooth dependence on all variables is automatic. 
This is inherited by special Colombeau algebras when these are considered as
subspaces of such full algebras (cf.\ \cite{O92}, p.\ 111).
Smooth dependence on the regularization parameter is, in fact, built in in 
the image of the space of distributions within the Colombeau algebra. 
Indeed, regularization via convolution yields as the embedded image of a 
(say, compactly supported) distribution $w$ the net $(w*\rho_\eps)_\eps$,
where $\rho_\eps = 1/\eps^n \rho(./\eps)$ and $\rho$ is an $\mathcal S$-mollifier
with all higher moments vanishing, 
which is obviously smooth in $\eps$. Thus it is natural to require 
the same regularity for all elements of the Colombeau algebra (or its ring of constants, respectively).

Moreover, certain geometrical constructions in special Colombeau algebras
require smooth parameter dependence. We mention, in particular, the notion
of generalized vector fields along a generalized curve (which is needed to
model geodesics in singular space-times in general relativity), cf.\ \cite{KS2002,KSV2003},
or sheaf properties in spaces of manifold-valued generalized functions
(cf.\ \cite{KSV2009}).

Finally, we point out the important characterization result on
isomorphisms of Colombeau algebras on differentiable manifolds due to H.\
Vernaeve. He proved that, up to multiplication by an idempotent
generalized number, multiplicative linear functionals on a Colombeau
algebra are precisely given as evaluation maps in generalized points (see
\cite[Th.\ 4.5]{V09}) and algebra isomorphisms are realized as pullbacks under invertible
manifold-valued generalized function (\cite[Th.\ 5.1]{V09}). When transferring these results to
the case of smooth parameter dependence, due to the fact that there are no
nontrivial idempotents in this setting (cf.\ Proposition \ref{gen:idem2} below), both characterizations hold 
without restriction (\cite{B2009,B2010}).

The purpose of the present paper is to initiate a systematic study of
special Colombeau algebras with continuous or smooth parameter dependence.
It is structured as follows: After fixing some notation in Section 2,
the main result of the first part of the paper is given in Section 3,
namely that Colombeau spaces
with continuous or smooth parameter dependence are in fact isomorphic.
Based on this identity, Section 4 studies algebraic
properties of the space $\Ksm$ of smoothly parametrized generalized
numbers. In particular, we analyze the ring structure of $\Ksm$ (zero divisors,
exchange ring, Gelfand ring, and partial order) and conclude by establishing some
fundamental properties of ideals in $\Ksm$.

\section{Notation}\label{notation}

 Throughout this paper we will write $I$ for the interval $(0,1]$. The manifolds $M$ and $N$ are assumed to be smooth, Hausdorff and second countable. For any two sets $A$ and $B$ the relation $A \comp B$ denotes that $A \subseteq \overline A \subseteq B$ with $\overline A$ compact. Whenever we do not have to distinguish between $\R$ and $\C$ we will denote either of the fields by $\K$.
 
The prototypical special Colombeau algebra of generalized functions over some smooth manifold $M$ is given as the quotient $\G(M):=\Em(M)/\Neg(M)$, where the algebra $\Em(M)$ and the ideal $\Neg(M)$ of $\Em(M)$ are defined by 
(with ${\mathcal P}(M)$ the space of linear differential operators on $M$)
\begin{eqnarray*}
\Em(M) &:=& \{(u_\eps)_\eps\in {\mathcal C}^\infty(M)^I \mid \forall K\subset\subset X\ \forall P\in{\mathcal P}(M)\ \exists N\in\N: \\
&& \hphantom{xxxxxxxxxxxxxxx} \sup_{x\in K}|Pu_\eps(x)|=O(\eps^{-N})\} \\
\Neg(M) &:=& \{(u_\eps)_\eps\in {\mathcal C}^\infty(M)^I \mid \forall K\subset\subset X\ \forall P\in{\mathcal P}(M)\ \forall m\in\N: \\
&& \hphantom{xxxxxxxxxxxxxxx} \sup_{x\in K}|Pu_\eps(x)|=O(\eps^{m})\} 
\end{eqnarray*}
The corresponding ring of constants in $\G(M)$ is given as $\widetilde\K := \Em/\Neg$, where
\begin{eqnarray*}
\Em &=& \{(r_\eps)_\eps\in \K^I \mid \exists N\in\N: |r_\eps|=O(\eps^{-N})\} \\
\Neg &=& \{(r_\eps)_\eps\in \K^I \mid \forall m\in\N: |r_\eps|=O(\eps^m)\}
\end{eqnarray*}
The equivalence class of some representative $(u_\eps)_\eps$ is denoted by $[(u_\eps)_\eps]$.
In the above definitions, the representatives $(u_\eps)_\eps$ and $(r_\eps)_\eps$ are allowed to depend
arbitrarily on the regularization parameter $\eps$. If instead we consider representatives that
depend continuously or smoothly on $\eps$ (i.e., $(\eps,x) \mapsto u_\eps(x)$ is continuous in $\eps$ and
smooth in $x$, or smooth in both variables, respectively, and analogously for $\eps \mapsto r_\eps$)
we denote this by the following subscripts: none (any parametrization, which is the standard definition), $_{co}$ (continuous parametrization), $_{sm}$ (smooth parametrization). Moderateness and negligibility are denoted by $\Em$, $\Emco$, $\Emsm$ and $\Neg$, $\Nco$, $\Nsm$, respectively. The rings of \emph{generalized numbers} are $\Kt$, $\Kco$ and $\Ksm$. Given two manifolds $M$ and $N$ we write $\G(M)$, $\Gco(M)$ and $\Gsm(M)$ for the special Colombeau algebras and $\G[M,N]$, $\Gco[M,N]$ and $\Gsm[M,N]$ for the spaces of manifold-valued generalized functions. We refer to \cite{GKOS2001,K2002, KSV2003} for details on these spaces.

By $\tau_{co}$ and $\tau_{sm}$ we denote the natural homomorphisms between spaces of generalized numbers and functions with continuous, smooth or arbitrary dependence on $\eps$. For simplicity, we do not distinguish notationally between these homomorphisms on different domains: $\tau_{co}$ will always denote maps from spaces with continuous to such with general parametrization, and $\tau_{sm}$ maps from spaces with smooth to the corresponding ones with continuous parametrization, e.g., $\tau_{co}: \Kco \rightarrow \Kt$ and $\tau_{sm}: \Ksm \rightarrow \Kco$, etc. We will sometimes use $\tau$ if a distinction is not necessary.

\section{Smooth, continuous, and arbitrary pa\-ra\-me\-tri\-zation}

 In this section we examine the interrelation between the various versions of spaces of generalized functions and generalized numbers introduced in Section~\ref{notation}. In particular, we shall prove that $\Ksm \cong \Kco \subsetneq \Kt$ and $\Gsm(M) \cong \Gco(M) \subsetneq \G(M)$.


 To begin with we note that $\Ksm \subseteq \Kco \subseteq \Kt$ via the canonical embeddings $\tau_{co}$ and $\tau_{sm}$, defined by $[(r_{\eps})_{\eps}] \mapsto [(r_{\eps})_{\eps}]$:
$$\xymatrix{\Ksm \, \ar@/_1cm/@{^(->}[rrrr]^{\tau_{co} \circ \tau_{sm}} \ar@{^(->}[rr]^{\tau_{sm}} \, && \Kco \ar@{^(->}[rr]^{\tau_{co}} \, && \Kt},$$
 These maps are well-defined as $\Emsm \subseteq \Emco \subseteq \EM$ and $\Nsm \subseteq \Nco \subseteq \Neg$: if $(s_{\eps})_{\eps}$ is another representative of $r$ then $\tau([(s_{\eps})_{\eps}]) = \tau([(r_{\eps})_{\eps}])$. Moreover, $\tau_{co}$, $\tau_{sm}$ and therefore also $\tau_{co} \circ \tau_{sm}$ are ring homomorphisms. They are injective because $\Emco \cap \Neg \subseteq \Nco$ and $\Emsm \cap \Nco \subseteq \Nsm$. Thus we obtain:

\begin{lem} \label{emb}
 The maps $\tau_{co}: \Kco \rightarrow \Kt$, $\tau_{sm}: \Ksm \rightarrow \Kco$ and $\tau_{co} \circ \tau_{sm}: \Ksm \rightarrow \Kt$, defined by $[(r_{\eps})_{\eps}] \mapsto [(r_{\eps})_{\eps}]$, are injective and unital ring homomorphisms. 
\end{lem}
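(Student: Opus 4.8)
The plan is to verify the three defining properties of the claimed maps in turn: well-definedness, the ring-homomorphism property, and injectivity. Since the maps are all defined by the identical formula $[(r_\eps)_\eps] \mapsto [(r_\eps)_\eps]$, the proof is essentially a matter of checking that the relevant inclusions between moderate and negligible nets hold, and that these inclusions interact correctly with the quotient. I will treat the three maps uniformly, writing $\tau$ for any of them; the case $\tau_{co} \circ \tau_{sm}$ follows automatically once $\tau_{co}$ and $\tau_{sm}$ are established, since a composition of injective unital ring homomorphisms is again one.

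First I would record the two chains of inclusions that make everything run. For moderateness: a net that is smooth in $\eps$ is in particular continuous in $\eps$, and a net continuous (resp.\ smooth) in $\eps$ is a special case of an arbitrary net, so $\Emsm \subseteq \Emco \subseteq \EM$ (and likewise for the function spaces); the moderateness estimate $|r_\eps| = O(\eps^{-N})$ does not refer to the parametrization at all, so no net is lost. The same reasoning gives $\Nsm \subseteq \Nco \subseteq \Neg$. Consequently, if $(r_\eps)_\eps$ and $(s_\eps)_\eps$ are two smooth (resp.\ continuous) representatives of the same class, their difference lies in $\Nsm$ (resp.\ $\Nco$), hence in $\Nco$ (resp.\ $\Neg$), so $(r_\eps)_\eps$ and $(s_\eps)_\eps$ represent the same class in the target ring: $\tau$ is well-defined. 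The ring-homomorphism property is then immediate, because addition and multiplication in all three rings are performed representative-wise, $\tau$ sends a representative to itself, and $\tau$ sends the class of the constant net $1$ to the class of the constant net $1$; thus $\tau$ is unital.

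For injectivity I would argue by the kernel. Suppose $\tau([(r_\eps)_\eps]) = 0$, i.e.\ a continuous (resp.\ smooth) representative $(r_\eps)_\eps$, viewed as an arbitrary (resp.\ continuous) net, lies in $\Neg$ (resp.\ $\Nco$). It is also moderate with continuous (resp.\ smooth) parametrization, hence lies in $\Emco \cap \Neg$ (resp.\ $\Emsm \cap \Nco$). The point is that such a net is already negligible \emph{within the finer class}, i.e.\ $\Emco \cap \Neg \subseteq \Nco$ and $\Emsm \cap \Nco \subseteq \Nsm$; this is exactly the standard fact that, for moderate nets, negligibility can be tested on a single (high enough) derivative order and power of $\eps$, and that property is insensitive to the regularity of the parametrization. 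Granting these two inclusions, $(r_\eps)_\eps \in \Nco$ (resp.\ $\Nsm$), so $[(r_\eps)_\eps] = 0$ already in $\Kco$ (resp.\ $\Ksm$), proving that $\tau$ has trivial kernel and hence is injective.

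The only step that is more than bookkeeping is the pair of inclusions $\Emco \cap \Neg \subseteq \Nco$ and $\Emsm \cap \Nco \subseteq \Nsm$ used for injectivity; everything else is a direct consequence of ``a smoother net is in particular a less smooth one'' together with the parametrization-independence of the growth estimates. In fact, for the ring $\K$ of generalized numbers these inclusions are trivial as well, since the negligibility condition for numbers involves no derivatives, so $\Emco \cap \Neg = \Nco \cap \EM_{co}$ literally by definition of the conditions; the subtlety (testing negligibility on one derivative order) is only relevant in the manifold-valued or function-space analogue. Hence for Lemma~\ref{emb} itself there is essentially no obstacle, and the proof reduces to spelling out the inclusions listed above and observing that the operations and the unit are preserved.
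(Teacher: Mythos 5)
Your argument is correct and coincides with the paper's own proof: well-definedness from $\Emsm \subseteq \Emco \subseteq \EM$ and $\Nsm \subseteq \Nco \subseteq \Neg$, the homomorphism property from the representative-wise operations, and injectivity from $\Emco \cap \Neg \subseteq \Nco$ and $\Emsm \cap \Nco \subseteq \Nsm$. Your added remark that these last inclusions are immediate for generalized numbers (no derivatives in the negligibility test) is a fair observation that the paper leaves implicit.
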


 Let $M$ be a smooth, Hausdorff and second countable manifold. As for generalized numbers we consider the following maps between the different versions of algebras of generalized functions:
$$\xymatrix{\Gsm(M) \;\; \ar@/_1cm/@{^(->}[rrrr]^{\tau_{co} \circ \tau_{sm}} \ar@{^(->}[rr]^{\tau_{sm}} \, && \Gco(M) \ar@{^(->}[rr]^{\tau_{co}} \, && \G(M)}.$$

As above we obtain:

\begin{lem} \label{gf:emb1}
 Let $M$ be a manifold. The maps $\tau_{co}: \Gco(M) \rightarrow \G(M)$, $\tau_{sm}: \Gsm(M) \rightarrow \Gco(M)$ and $\tau_{co} \circ \tau_{sm}: \Gsm(M) \rightarrow \G(M)$, defined by $[(u_{\eps})_{\eps}] \mapsto [(u_{\eps})_{\eps}]$ are injective and unital algebra homomorphisms. 
\end{lem}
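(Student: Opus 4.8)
The plan is to follow verbatim the argument already carried out for generalized numbers in Lemma~\ref{emb}, since the situation for generalized functions on $M$ is structurally identical; the only extra ingredient is keeping track of the differential operators $P \in \mathcal{P}(M)$ and the compact exhaustion of $M$, which play no essential role in the bookkeeping.

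First I would verify well-definedness, i.e.\ the chains of inclusions
$$\Emsm(M) \subseteq \Emco(M) \subseteq \EM(M), \qquad \Nsm(M) \subseteq \Nco(M) \subseteq \Neg(M).$$
Each of these is immediate from the definitions: the moderateness and negligibility conditions are identical in all three versions (the same $O(\eps^{-N})$, resp.\ $O(\eps^m)$, estimates on $\sup_{x\in K}|Pu_\eps(x)|$ for all $K \comp M$ and all $P \in \mathcal{P}(M)$), and the three versions differ only in the prescribed regularity of $(\eps,x)\mapsto u_\eps(x)$, with smooth-in-both-variables implying smooth-in-$x$/continuous-in-$\eps$ implying arbitrary dependence. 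Consequently, if $(s_\eps)_\eps$ is another representative of $u$, then $(u_\eps - s_\eps)_\eps$ lies in the smaller negligible space and hence in the larger one, so $\tau([(s_\eps)_\eps]) = \tau([(u_\eps)_\eps])$, and the maps descend to the quotients. That $\tau_{co}$, $\tau_{sm}$ and their composite are algebra homomorphisms, and that they send the class of the constant net $1$ to itself, is clear because on the level of representatives they act as the identity, and addition, multiplication and the unit in all three algebras are defined pointwise in the same way.

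For injectivity I would invoke the inclusions
$$\Emco(M) \cap \Neg(M) \subseteq \Nco(M), \qquad \Emsm(M) \cap \Nco(M) \subseteq \Nsm(M).$$
Indeed, a net in $\Emco(M) \cap \Neg(M)$ is smooth in $x$ and continuous in $\eps$ (from membership in $\Emco(M)$) and satisfies the negligibility estimates (from membership in $\Neg(M)$), which is precisely the definition of $\Nco(M)$; the second inclusion is analogous, with ``smooth in $(\eps,x)$'' in place of ``smooth in $x$, continuous in $\eps$''. Hence if $\tau_{co}([(u_\eps)_\eps]) = 0$ in $\G(M)$ with $(u_\eps)_\eps \in \Emco(M)$, then $(u_\eps)_\eps \in \Nco(M)$, so $[(u_\eps)_\eps] = 0$ in $\Gco(M)$; similarly for $\tau_{sm}$, and therefore also for $\tau_{co}\circ\tau_{sm}$.

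There is no genuine obstacle in this proof; it is entirely a matter of unwinding definitions. The one point that deserves a moment's care is the interaction of the regularity and growth conditions in the injectivity step — one must observe that passing to the ambient (weaker-regularity) negligible space does not lose the regularity already present in the representative, so the element in fact lies in the stronger negligible space. Once this is noted, the statement follows exactly as for $\Ksm$, $\Kco$, $\Kt$.
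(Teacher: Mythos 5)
Your proof is correct and follows exactly the paper's approach: the paper proves Lemma~\ref{gf:emb1} by the phrase ``as above we obtain,'' referring to the same chain of inclusions $\Emsm \subseteq \Emco \subseteq \EM$, $\Nsm \subseteq \Nco \subseteq \Neg$ for well-definedness and $\Emco \cap \Neg \subseteq \Nco$, $\Emsm \cap \Nco \subseteq \Nsm$ for injectivity that you spell out. Your write-up simply makes explicit the routine verification the paper leaves implicit.
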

Whenever convenient, we may therefore omit the natural embeddings and simply write $\Ksm \subseteq \Kco \subseteq \Kt$ and $\Gsm(M) \subseteq \Gco(M) \subseteq \G(M)$.

 Remarkably, $\tau_{co}$ (and therefore also $\tau_{co} \circ \tau_{sm}$) is not surjective, but $\tau_{sm}$ is. Both of these results will be proved below. We start by examining the relation between arbitrary and continuous dependence
on $\eps$. To this end, we first determine the idempotents in the algebra of generalized functions or ring of generalized
numbers, respectively, in the case of continuous or smooth parameter dependence. We first note that the situation
for arbitrary $\eps$-dependence is completely characterized by the following two results: By \cite[Th.\ 4.1]{AJOS08}
the nontrivial idempotents in $\widetilde \K$ are precisely the equivalence classes in $\widetilde \K$ of characteristic
functions $e_S$ of some $S\subseteq I$ with $0\in \overline S \cap \overline{S^c}$. Furthermore, by \cite[Prop.\ 5.3]{V09},
any idempotent of $\G(M)$ for $M$ connected is a generalized constant. 

Contrary to the case of $\G(M)$ and $\widetilde \K$, the following result shows that there are no nontrivial
idempotents in the case of smooth or continuous parameter dependence:

\begin{prop} \label{gen:idem2}
 Let $M$ be a connected smooth manifold. Then there are no nontrivial idempotents in $\Gco(M)$.
\end{prop}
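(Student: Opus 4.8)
The plan is to show that an idempotent $u = [(u_\eps)_\eps] \in \Gco(M)$, i.e.\ one satisfying $u^2 = u$, must be either $0$ or $1$. First I would reduce to a pointwise statement: the equation $u^2 = u$ means $(u_\eps^2 - u_\eps)_\eps \in \Nco(M)$, so $u_\eps(x)(u_\eps(x)-1)$ is negligible uniformly on compact sets. This says each $u_\eps(x)$ is ``close to $\{0,1\}$''; the crucial point is that $u_\eps$ is \emph{continuous} in $(\eps,x)$ jointly (continuous in $\eps$, smooth in $x$), so it cannot jump between the two values without passing through the forbidden middle ground, and $M$ being connected prevents a spatial jump.

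The key steps I would carry out are: (1) Fix a compact connected set $K \comp M$ with nonempty interior. From $\sup_{x\in K}|u_\eps(x)(u_\eps(x)-1)| = O(\eps^m)$ for all $m$, deduce that for $\eps$ small, $u_\eps(K)$ is contained in a small neighbourhood of $\{0,1\}$ in $\K$, say in $B(0,\delta_\eps) \cup B(1,\delta_\eps)$ with $\delta_\eps \to 0$ faster than any power of $\eps$. (2) Since $u_\eps$ is continuous and $K$ is connected, the image $u_\eps(K)$ is connected, hence for each small $\eps$ it lies entirely in one of the two disjoint balls: define $S = \{\eps \in I : u_\eps|_K \text{ is near } 1\}$ (for $\eps$ below some threshold), and near $0$ on the complement. (3) Now exploit continuity \emph{in} $\eps$: the map $\eps \mapsto \sup_{x\in K}|u_\eps(x)|$ (or the value at a fixed point $x_0 \in K$) is continuous, so it cannot jump from near $0$ to near $1$; hence on a whole interval $(0,\eps_0]$ the net is consistently near $0$ or consistently near $1$. (4) In the first case $(u_\eps)_\eps$ restricted near $\eps = 0$ is negligible on $K$; in the second case $(u_\eps - 1)_\eps$ is. Finally, a connectedness/covering argument in $M$ (cover $M$ by such compacta $K$, overlapping, and observe the ``near $0$'' vs.\ ``near $1$'' alternative is locally constant in $x$ as well) upgrades this from $K$ to all of $M$, giving $u = 0$ or $u = 1$ in $\Gco(M)$.

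The main obstacle I anticipate is making step (3) fully rigorous: near $\eps = 0$ one only controls $u_\eps$ up to negligibility, not exactly, so ``near $0$'' and ``near $1$'' are asymptotic statements, and one must rule out an infinite oscillation of $\eps \mapsto u_\eps$ between the two regimes as $\eps \to 0^+$. Continuity of $\eps \mapsto u_\eps(x_0)$ on the \emph{open} interval $I = (0,1]$ handles oscillation on any compact sub-interval bounded away from $0$, but the accumulation at $0$ requires the quantitative gap: once $\delta_\eps < 1/2$, the sets $\{\eps : |u_\eps(x_0)| < 1/2\}$ and $\{\eps : |u_\eps(x_0) - 1| < 1/2\}$ are open, disjoint, and cover a punctured neighbourhood of $0$, so by connectedness of small intervals $(0,\eps_0]$ exactly one of them contains such an interval. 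I expect the write-up to hinge on this observation, together with the standard fact that $\Emco \cap \Neg \subseteq \Nco$ (used implicitly to pass between representatives). The spatial connectedness argument in step (4) is then routine but should be stated, since it is exactly where the hypothesis that $M$ is connected enters.
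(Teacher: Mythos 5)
Your proposal is correct and follows essentially the same route as the paper: the paper also partitions a connected parameter--space region (it uses $(0,\eps_0]\times U$ at once, where you treat connectedness in $x$ and then in $\eps$ separately) into the two closed--open sets where $u_\eps$ tracks the root near $1$ or near $0$, and then patches over $M$ by connectedness. The only cosmetic difference is that the paper writes the two branches explicitly as $\frac12\pm\sqrt{\frac14+n_\eps(x)}$ rather than arguing from smallness of $|u_\eps(u_\eps-1)|$; your gap estimate is equivalent.
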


\begin{proof}
 Let $u = [(u_{\eps})_{\eps}] \in \Gco(M)$ such that $u_{\eps} \cdot u_{\eps} = u_{\eps} + n_{\eps}$ for some $(n_{\eps})_{\eps} \in \Nco(M)$.

 We first consider an open, relatively compact and connected open set $U$. There are two possible solutions for the quadratic equation $u_{\eps}(x) \cdot u_{\eps}(x) = u_{\eps} (x) + n_{\eps}(x)$ on U:
\begin{equation} \label{eq:qsol2}
 u_{\eps,1}(x) = \frac{1}{2} + \sqrt{\frac{1}{4} + n_{\eps}(x)} \quad \text{and} \quad u_{\eps,2}(x) = \frac{1}{2} - \sqrt{\frac{1}{4} + n_{\eps}(x)}.
\end{equation}
 As $(n_{\eps})_{\eps}$ is negligible, there exists $\eps_0 >0$ such that $\vert n_{\eps}(x) \vert < \frac{1}{8}$ for all $\eps < \eps_0$ and all $x \in U$.  By continuity of $u$ in $\eps$ and $x$, both of the sets
\begin{align*}
 U_1 := \{ (\eps,x) \in (0,\eps_0] \times U \, | \, u_{\eps}(x) = u_{\eps,1}(x) \} \\
 U_2 := \{ (\eps,x) \in (0,\eps_0] \times U \, | \, u_{\eps}(x) = u_{\eps,2}(x) \}
\end{align*}
 are closed and, as they form a partition of $(0, \eps_0] \times U$, also open in $(0,\eps_0] \times U$. Since the latter is connected we have that either $U_1 = (0,\eps_0] \times U$ or $U_2 = (0,\eps_0] \times U$. Let us assume that it is $U_1$. 
Thus for any $x \in U$, any $m \in \N$ and sufficiently small $\eps$ we obtain that
$$
 \vert u_{\eps}(x) - 1 \vert  
= \left\vert \sqrt{\frac{1}{4} + n_{\eps}(x)} - \frac{1}{2} \right\vert \\
< \eps^m.
$$
 Therefore $\left. u \right\vert_U = 1$ in $\Gco(U)$. In the case $U_2 = (0,\eps_0] \times U$ 
we have that $\left. u \right\vert_U = 0$.

 Now consider
\begin{align*}
 & M_1 := \{ x \in M \, | \, \exists \text{ neighborhood } V \text{ of } x \text{ such that } u \vert_V = 1 \} \\
 & M_2 := \{ x \in M \, | \, \exists \text{ neighborhood } V \text{ of } x \text{ such that } u \vert_V = 0 \}.
\end{align*}
 Both sets are obviously open. Moreover, by the above, $M$ is the disjoint union of $M_1$ and $M_2$. Connectedness of $M$ implies that $u$ is either $1$ or $0$.
\end{proof}

Consequently, there are no nontrivial idempotents in $\Gsm(M)$, $\Kco$ and $\Ksm$.

Next we demonstrate that $\tau_{co}$ is not an isomorphism. Hence $\Kt$ is strictly larger than $\Kco$, and a fortiori $\G(M)$ is strictly larger than $\Gco(M)$. 

\begin{lem} \label{emb2}
 $\tau_{co}: \Kco \rightarrow \Kt$ is not surjective, i.e.\ $\Kco \subsetneq \Kt$.
\end{lem}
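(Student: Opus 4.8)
The plan is to exhibit an explicit element of $\Kt$ that cannot be represented by a net depending continuously on $\eps$. The natural candidate is the class of the characteristic function $e_S$ of a set $S \subseteq I$ chosen so that $0 \in \overline S \cap \overline{S^c}$; by \cite[Th.\ 4.1]{AJOS08} this is a nontrivial idempotent of $\Kt$. For concreteness one may take $S = \bigcup_{k \in \N} (2^{-2k-1}, 2^{-2k}]$, so that both $S$ and its complement accumulate at $0$, and set $r_\eps = 1$ for $\eps \in S$, $r_\eps = 0$ for $\eps \notin S$.

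First I would argue that $r := [(r_\eps)_\eps] \in \Kt$ is a genuine idempotent ($r^2 = r$ exactly, so a fortiori modulo $\Neg$) which is neither $0$ nor $1$ in $\Kt$: indeed $r - 0 = (r_\eps)_\eps$ is not negligible because $r_\eps = 1$ for a sequence $\eps_k = 2^{-2k} \to 0$, and likewise $r - 1$ is not negligible because $r_\eps = 0$ along $\eps_k = 2^{-2k-1} \to 0$. Next, suppose for contradiction that $r$ lies in the image of $\tau_{co}$, i.e.\ there is a net $(s_\eps)_\eps \in \Emco$ with $(s_\eps - r_\eps)_\eps \in \Neg$. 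Then $(s_\eps)_\eps$ is itself an idempotent of $\Kco$ modulo $\Nco$: one checks $s_\eps^2 - s_\eps = (s_\eps^2 - r_\eps^2) - (s_\eps - r_\eps) \in \Neg \cap \Emco \subseteq \Nco$, using that $r_\eps^2 = r_\eps$ exactly and that products and sums of elements of $\Neg$ with moderate nets stay in $\Neg$. By the Corollary to Proposition \ref{gen:idem2}, the only idempotents of $\Kco$ are $0$ and $1$, so $[(s_\eps)_\eps] \in \Kco$ equals $0$ or $1$, hence $r = \tau_{co}([(s_\eps)_\eps])$ equals $0$ or $1$ in $\Kt$ — contradicting the previous paragraph.

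An alternative, more hands-on route avoids invoking the idempotent classification: from a continuous representative $(s_\eps)_\eps$ of $r$ one derives a contradiction directly. Since $s_\eps - r_\eps$ is negligible, there is $\eps_0$ with $|s_\eps - r_\eps| < 1/4$ for $\eps < \eps_0$; thus $s_\eps > 3/4$ on $S \cap (0,\eps_0)$ and $s_\eps < 1/4$ on $S^c \cap (0,\eps_0)$. But $S \cap (0,\eps_0)$ and $S^c \cap (0,\eps_0)$ both accumulate at $0$, so by the intermediate value theorem $s_\eps$ takes the value $1/2$ at points $\eps$ arbitrarily close to $0$, whence $|s_\eps - r_\eps| \ge 1/4$ for such $\eps$ — contradiction. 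I would present the idempotent argument as the main line since it is shorter and reuses Proposition \ref{gen:idem2}, perhaps remarking on the IVT variant.

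The only real subtlety is the bookkeeping in showing $(s_\eps)_\eps$ remains an idempotent modulo $\Nco$ rather than merely modulo $\Neg$: this is where one needs that $\Neg \cap \Emco \subseteq \Nco$, which is exactly the inclusion already used in establishing injectivity of $\tau_{co}$ in Lemma \ref{emb}. Everything else is routine. I do not expect any genuine obstacle; the statement is essentially a corollary of Proposition \ref{gen:idem2} together with the known description of idempotents in $\Kt$.
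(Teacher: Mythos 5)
Your proposal is correct and coincides with the paper, which in fact gives both of your arguments: its second proof is your idempotent argument, phrased abstractly (a surjective $\tau_{co}$ would be an isomorphism and would transport the nontrivial idempotents of $\Kt$ guaranteed by \cite[Th.~4.1]{AJOS08} into $\Kco$, contradicting Proposition \ref{gen:idem2}), while its first proof is your IVT variant, carried out with $S=\{1/n : n\in\N\}$ and applied to $|s_\eps|$ rather than $s_\eps$ so as to also cover $\K=\C$. No gaps; your explicit verification that a continuous representative of an exact idempotent is again idempotent modulo $\Nco$ (via $\Neg\cap\Emco\subseteq\Nco$) is exactly the bookkeeping implicit in the paper's shorter formulation.
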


\begin{proof}[First proof]
 Let $r = [(r_{\eps})_{\eps}] \in \Kt$ be defined by
\begin{equation*}
 r_{\eps} := \left \lbrace \begin{array}{ll}
				1 & \text{ if } \eps = \frac{1}{n} \text{ for some } n \in \N \\
				0 & \text{else}
                           \end{array} \right. .
\end{equation*}
 Suppose there exists a continuous representative $(s_{\eps})_{\eps}$ of $r$. Then $r_{\eps} = s_{\eps} + n_{\eps}$ for some $(n_{\eps})_{\eps} \in \Neg$. For $\eps$ sufficiently small (say smaller than some $\eps_0 >0$) we have that $\left| n_{\eps} \right| < \frac{1}{4}$ and therefore
\begin{equation} \label{eq:ineq1}
 \text{either} \quad |s_{\eps}| < \frac{1}{4} \quad \text{or} \quad |s_{\eps}| > \frac{3}{4}.
\end{equation}
 For $\N \ni n > \frac{1}{\eps_0}$ we have in particular that
$
 \left| s_{\frac{1}{n}} \right| \geq \left|r_{\frac{1}{n}}\right| - \left|n_{\frac{1}{n}}\right| >  \frac{3}{4}
$
 but (as $\frac{2n+1}{2n(n+1)} = \frac{1}{2}(\frac{1}{n} + \frac{1}{n+1})$)
\begin{equation*}
 \left|s_{\frac{2n+1}{2n(n+1)}}\right| \leq \left|r_{\frac{2n+1}{2n(n+1)}}\right| + \left|n_{\frac{2n+1}{2n(n+1)}}\right| < 0 + \frac{1}{4} = \frac{1}{4}.
\end{equation*}
 By the intermediate value Theorem there must be an $\eps \in ({\frac{2n+1}{2n(n+1)}},\frac{1}{n})$ such that $|s_{\eps}| = \frac{1}{2}$. This contradicts \eqref{eq:ineq1}.
\end{proof}

\begin{proof}[Second proof]
If $\tau_{co}$ was surjective it would be an isomorphism. Since by \cite[Thm.~4.1]{AJOS08} there exist
nontrivial idempotents in $\widetilde\K$, the same would be true of $\Kco$, contradicting Proposition \ref{gen:idem2}.
\end{proof}
This immediately implies:
\begin{cor}
 $\Gco(M) \subsetneq \G(M)$.
\end{cor}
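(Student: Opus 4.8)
The plan is to transfer the counterexample for generalized numbers to generalized functions via the embedding of $\Kt$ as constant elements of $\G(M)$. Recall that a net $(c_\eps)_\eps \in \EM$ gives rise to the element of $\G(M)$ represented by the constant functions $u_\eps \equiv c_\eps$; since all derivatives of $u_\eps$ vanish and $\sup_{x\in K}|u_\eps(x)| = |c_\eps|$ for any nonempty $K \comp M$, this assignment defines a unital algebra homomorphism $\Kt \to \G(M)$, and it is injective because a constant net that is negligible in $\EM(M)$ is already negligible in $\EM$. Exactly the same construction restricts to an embedding $\Kco \hookrightarrow \Gco(M)$ (constant functions with a continuous net of values depend continuously on $(\eps,x)$).

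Now let $r = [(r_\eps)_\eps] \in \Kt$ be the element constructed in the first proof of Lemma \ref{emb2}, which admits no continuous representative, and let $u \in \G(M)$ be the associated constant generalized function. I claim $u \notin \Gco(M)$. Suppose, for contradiction, that $u$ has a representative $(v_\eps)_\eps$ with $(\eps,x) \mapsto v_\eps(x)$ continuous. Fix a point $x_0 \in M$. Then $\eps \mapsto v_\eps(x_0)$ is continuous, and since $\{x_0\}$ is a compact subset of $M$, applying the negligibility estimates for $(u_\eps - v_\eps)_\eps \in \Neg(M)$ to $K = \{x_0\}$ yields $|r_\eps - v_\eps(x_0)| = O(\eps^m)$ for every $m \in \N$. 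Hence $(v_\eps(x_0))_\eps$ would be a continuous representative of $r$, contradicting Lemma \ref{emb2}. Therefore $u \in \G(M) \setminus \Gco(M)$ and $\Gco(M) \subsetneq \G(M)$.

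There is essentially no obstacle in this argument: the only points to verify are that joint continuity of a representative forces continuity of its pointwise evaluations and that restriction to a single point respects the negligibility conditions, both of which are immediate once $\{x_0\}$ is regarded as a compact subset of $M$. The content of the corollary is thus entirely carried by Lemma \ref{emb2}.
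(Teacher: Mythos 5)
Your argument is correct and coincides with the paper's intended (and essentially omitted) proof: the corollary is stated as an immediate consequence of Lemma \ref{emb2} via the identification of generalized numbers with constant generalized functions, which is exactly the reduction you carry out. Your filling-in of the details (joint continuity giving continuity of the evaluation $\eps\mapsto v_\eps(x_0)$, and negligibility on the compact set $\{x_0\}$) is accurate.
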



Our next aim is to establish surjectivity of the natural embeddings $\tau_{sm}$, both in the case of 
the rings of generalized numbers $\Kco$ and $\Ksm$ and for the algebras of generalized functions $\Gco$ and $\Gsm$. 



\begin{thm}\label{iso}
 $\Ksm$ is isomorphic to $\Kco$ (via $\tau_{sm}$). 
\end{thm}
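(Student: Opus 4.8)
The plan is to show that $\tau_{sm}\colon \Ksm \to \Kco$ is surjective; injectivity and the homomorphism property are already contained in Lemma~\ref{emb}. So, given a moderate net $(r_\eps)_\eps \in \Emco$ with $r_\eps$ continuous in $\eps$, I want to produce a net $(\tilde r_\eps)_\eps \in \Emsm$, smooth in $\eps$, with $(r_\eps - \tilde r_\eps)_\eps \in \Nco$ (in fact $\Neg$ suffices, since $\Emsm \cap \Neg \subseteq \Nsm$ by the injectivity argument already given). The natural idea is a \emph{mollification in the parameter}: replace $r_\eps$ by a smoothed version $\tilde r_\eps := \int r_{\sigma(\eps,t)}\,\phi(t)\,dt$ for a suitable fixed mollifier $\phi$ and a reparametrization $\sigma$ that stays inside $I=(0,1]$ and contracts toward $0$ fast enough. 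Concretely one can work on a logarithmic scale: set $\eps = e^{-s}$, let $g(s) := r_{e^{-s}}$ (continuous on $[0,\infty)$), convolve with a compactly supported mollifier $\psi \in \Cinf(\R)$ to get $\tilde g := g * \psi$, and define $\tilde r_\eps := \tilde g(-\log \eps)$. Smoothness of $\eps \mapsto \tilde r_\eps$ is then immediate from smoothness of $\tilde g$ and of $\eps \mapsto -\log\eps$ on $(0,1]$.

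The two things to check are that the smoothed net is still moderate and that the difference is negligible. Moderateness: $|r_\eps| = O(\eps^{-N})$ means $|g(s)| = O(e^{Ns})$, and convolution with a fixed compactly supported $\psi$ (support in $[-a,a]$, say) only changes the argument by a bounded amount, so $|\tilde g(s)| \le \|\psi\|_{L^1}\sup_{|t-s|\le a}|g(t)| = O(e^{Ns})$, i.e. $|\tilde r_\eps| = O(\eps^{-N})$ — here one uses $e^{N(s+a)} = e^{Na}e^{Ns}$. This is the easy half. The harder half is negligibility of $(r_\eps - \tilde r_\eps)_\eps$: we need $|r_\eps - \tilde r_\eps| = O(\eps^m)$ for every $m$, i.e. $|g(s) - (g*\psi)(s)| = O(e^{-ms})$ for every $m$. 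But plain continuity of $g$ gives no decay rate at all for $g - g*\psi$, so a fixed mollifier cannot work; the scale of mollification must shrink as $s \to \infty$. So instead I would take $\tilde g(s) := \int g(s - u)\,\psi_{\delta(s)}(u)\,du$ with $\psi_{\delta}(u) = \delta^{-1}\psi(u/\delta)$ and $\delta(s) \to 0$, chosen smooth and slowly varying so that differentiability is preserved. The point is that for fixed $\eps_1 < \eps_2$ in $I$, the net $r$ being a representative of something in $\Kco$ does \emph{not} by itself control $r_\eps$ for $\eps$ away from $0$ — but we only care about the germ at $0$, and near $0$ we have the uniform estimates $|r_\eps| \le \eps^m$-type bounds only for $\Nco$; for a general moderate net continuity plus moderateness is all we have, and that is genuinely not enough to approximate by a smooth net up to a negligible error with a naive construction.

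Because of this I expect the \textbf{main obstacle} to be the negligibility estimate, and I suspect the correct construction is more subtle than a convolution: one likely has to use that the regularization parameter only matters through its germ at $0$ and reparametrize $I$ by a sequence $\eps_k \downarrow 0$, interpolating the values $r_{\eps_k}$ by an explicitly chosen smooth function whose behaviour between consecutive nodes is forced to match the moderateness bounds and whose deviation from $r_\eps$ at the nodes themselves is zero. In other words, the natural approach is: pick $\eps_k = 1/k$ (or any fixed null sequence), build a smooth $\tilde r$ on $(0,1]$ with $\tilde r_{\eps_k} = r_{\eps_k}$ for all $k$ — using smooth bump/partition-of-unity techniques on the intervals $[\eps_{k+1},\eps_k]$ — and then argue that $(r_\eps - \tilde r_\eps)_\eps \in \Neg$ precisely because one is free to choose the $\eps_k$ along which the original net already realizes its germ, together with the fact that moderate nets differing at a single value sequence that is "generic enough" differ negligibly. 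Making that last step rigorous — identifying what "generic enough" means and checking it is automatic, or else showing one may pass to a subnet/reparametrization without changing the class in $\Kco$ — is where the real work lies; the smoothing itself is routine once the right sampling sequence is fixed. For the algebra case $\Gsm(M) \cong \Gco(M)$, the same construction applied locally (in charts, with $K \comp M$ and differential operators $P$) and patched with a smooth partition of unity on $M$ should go through, the $x$-smoothness being untouched since the smoothing is only in $\eps$.
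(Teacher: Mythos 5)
Your diagnosis of the difficulty is right --- a fixed mollifier cannot produce a negligible error from mere continuity --- but the fallback you settle on does not work, and the idea that actually closes the gap is one you brush past. The step ``build a smooth $\tilde r$ with $\tilde r_{\eps_k}=r_{\eps_k}$ at a null sequence $(\eps_k)$ and conclude $(r_\eps-\tilde r_\eps)_\eps\in\Neg$ because the sequence is generic enough'' is false: two moderate continuous nets that agree on $\eps_k=1/k$ can differ by $1$ at the midpoints (take $r_\eps\equiv 1$ and any smooth interpolant vanishing between consecutive nodes), so no choice of sampling sequence makes agreement at the nodes imply negligibility of the difference. Negligibility is a statement about \emph{all} small $\eps$, not about a germ along a subsequence, and nothing in the definition of $\Kco$ lets you pass to a subnet.

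The missing ingredient is the classical fact (Madsen--Tornehave, Lem.~A.9, reproved and strengthened as Lemma~\ref{approx} in the paper) that a continuous function on $I$ can be approximated by a smooth one within \emph{any} prescribed positive continuous error function --- in particular within $g(\eps)=e^{-1/\eps}$, which is $O(\eps^m)$ for every $m$. The construction is exactly the variable-scale smoothing you gesture at in your first paragraph, implemented via a locally finite smooth partition of unity $(\phi_\eta)_{\eta\in I}$ subordinate to neighborhoods $I_\eta$ on which $r$ oscillates by less than $\inf_{I_\eta} g$: set $s_\eps:=\sum_\eta\phi_\eta(\eps)\,r_\eta$. Then $s$ is smooth, $|s_\eps-r_\eps|\le e^{-1/\eps}$ for all $\eps\in I$, hence $(s_\eps-r_\eps)_\eps\in\Nco$ and $(s_\eps)_\eps\in\Emsm$, and the theorem follows in two lines. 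So your instincts about where the work lies were correct, and your convolution-on-a-logarithmic-scale idea could be pushed through with $\delta(s)$ chosen from local moduli of continuity, but as written the proposal replaces the one genuinely needed lemma with an assertion that is not true.
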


\begin{proof}
Let $(r_{\eps})_{\eps} \in \Emco$. By \cite[Lem.~A.9]{MT1997} (or its strengthening, Lemma \ref{approx} below) 
there exists $(s_{\eps})_{\eps} \in \Cinf(I,\K)$ such that
\begin{equation*}
 \vert s_{\eps} - r_{\eps} \vert \leq e^{- \frac{1}{\eps}} \quad \forall \eps \in I,
\end{equation*}
so $\vert s_{\eps} - r_{\eps} \vert < \eps^{m}$ for all $m \in \N$ and $\eps$ sufficiently small.
 This implies $(s_{\eps})_{\eps} \in \Emsm$ and $[(s_\eps)_\eps] = [(r_\eps)_\eps]$ in $\Kco$.
\end{proof}

Alternatively, one could also apply the Weierstra{\ss} Approximation Theorem on compact intervals covering $(0,1]$ to prove Th.\ \ref{iso}.




The proof of surjectivity of $\tau_{sm}: \Gsm(M) \rightarrow \Gco(M)$ will rely on the following extension of \cite[Lem.~A.9]{MT1997}. 

\begin{lem} \label{approx} Let $U\subseteq \R^n$, $W\subseteq \R^m$ be open, and suppose that $h:I\times U \to W$, $(\eps,x) \mapsto h(\eps,x)$ is continuous with respect to $\eps$ and smooth with respect to $x$. Then for any continuous map $g: I\times U \to \R^+$, any $k\in \N_0$ and any open subset $U_1$ of $U$ with $\overline{U_1}\comp U$ there exists a smooth map $f: I\times U \to W$ such that for all $|\alpha|\le k$ and all $\eps\in I$,
$$ \sup_{x\in U_1}\|\partial_x^\alpha h(\eps,x) - \partial_x^\alpha f(\eps,x)\| \le \inf_{x\in U_1} g(\eps,x)$$ 
\end{lem}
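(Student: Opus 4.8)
The plan is to smooth $h$ in the $\eps$-direction by a standard mollification argument, using the smoothness in $x$ for free. First I would fix, for each $j\in\N$, a number $\delta_j>0$ small enough that the compact set $\overline{U_1}$ stays well inside $U$ and that $h$ varies little on $\eps$-intervals of length $\delta_j$; more precisely, since $h(\cdot,x)$ is continuous uniformly in $x$ on the compact set $[\,2^{-j-1},2^{-j}\,]\times\overline{U_1}$ (for each fixed multi-index $\alpha$ with $|\alpha|\le k$, the map $\partial_x^\alpha h$ is again continuous in $\eps$ and smooth in $x$ by differentiation under the integral sign / the hypothesis applied to difference quotients, hence uniformly continuous on that compact set), one can choose $\delta_j$ so that $\|\partial_x^\alpha h(\eps,x)-\partial_x^\alpha h(\eps',x)\|$ is as small as we like whenever $|\eps-\eps'|<\delta_j$, $\eps,\eps'$ in a slightly enlarged dyadic block, and $x\in\overline{U_1}$. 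Shrinking $\delta_j$ further I also ensure $\delta_j<\tfrac14(2^{-j}-2^{-j-1})$ so the convolution does not leave $I$.

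Next I would build a single smoothing scale $\eta:I\to\R^+$ that is itself smooth and satisfies $0<\eta(\eps)<\delta_j$ on each dyadic block, by patching the constants $\delta_j$ together with a smooth partition of unity subordinate to the cover $\{(2^{-j-1}/2,\,2^{-j}\cdot 2)\}_j$ of $(0,1]$. Let $\varphi\in\Cinf(\R)$ be a fixed nonnegative bump supported in $(-1,1)$ with $\int\varphi=1$, and define
\begin{equation*}
 f(\eps,x) := \int_{\R} \varphi(t)\, h\bigl(\eps-\eta(\eps)\,t,\;x\bigr)\,dt .
\end{equation*}
The integrand is supported in $|t|<1$, i.e.\ in $\eps-\eta(\eps)t\in(\eps-\eta(\eps),\eps+\eta(\eps))\subset I$, so $f$ is well defined; it is smooth in $x$ because $h$ is and we may differentiate under the integral sign, and it is smooth in $\eps$ because after the substitution $u=\eps-\eta(\eps)t$ (valid since $t\mapsto \eps-\eta(\eps)t$ is a diffeomorphism for fixed $\eps$, and the resulting expression depends smoothly on the parameter $\eps$ through $\eta$ and the limits) the $\eps$-dependence enters through the smooth functions $\eta$ and $\eps$ only, with a smooth compactly supported kernel. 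Writing $f$ in the convolution form $f(\eps,x)=\frac1{\eta(\eps)}\int \varphi\!\bigl(\tfrac{\eps-u}{\eta(\eps)}\bigr)h(u,x)\,du$ makes the joint smoothness transparent. Since $W$ is open and $f(\eps,x)$ is an average of values $h(\eps-\eta(\eps)t,x)$ lying in $W$, by choosing $\delta_j$ (hence $\eta$) small enough on each block we keep $f(\eps,x)\in W$ for $x\in U_1$; if one insists on $f$ mapping all of $I\times U$ into $W$, compose with a smooth retraction or simply note $W=\R^m$ in the application, or shrink further using local convexity of $W$ near the compact image.

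It remains to estimate $\partial_x^\alpha h-\partial_x^\alpha f$. Because differentiation in $x$ commutes with the $\eps$-convolution, $\partial_x^\alpha f(\eps,x)=\int\varphi(t)\,\partial_x^\alpha h(\eps-\eta(\eps)t,x)\,dt$, and since $\int\varphi=1$,
\begin{equation*}
 \partial_x^\alpha h(\eps,x)-\partial_x^\alpha f(\eps,x) = \int_{\R}\varphi(t)\bigl(\partial_x^\alpha h(\eps,x)-\partial_x^\alpha h(\eps-\eta(\eps)t,x)\bigr)\,dt .
\end{equation*}
For $|t|<1$ we have $|\eps-(\eps-\eta(\eps)t)|<\eta(\eps)<\delta_j$, so by the uniform-continuity choice of $\delta_j$ the integrand is bounded by any prescribed tolerance on $\overline{U_1}$. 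Here is the one subtle point, which I regard as the main obstacle: the target bound $\inf_{x\in U_1} g(\eps,x)$ is a positive but possibly tiny $\eps$-dependent quantity, so the tolerance must be chosen blockwise as a lower bound for $\inf_{x\in\overline{U_1}} g(\eps,x)$ over $\eps$ in the block — this infimum is itself continuous and strictly positive on the compact block (since $g>0$ is continuous and $\overline{U_1}$ is compact), hence bounded below by some $c_j>0$, and we simply demand $\delta_j$ small enough that the modulus of continuity of each $\partial_x^\alpha h$ ($|\alpha|\le k$, finitely many) at scale $\delta_j$ on the block is $<c_j$. Taking the sup over $x\in U_1$ and over $|t|<1$, and using $\int\varphi=1$, yields $\sup_{x\in U_1}\|\partial_x^\alpha h(\eps,x)-\partial_x^\alpha f(\eps,x)\|\le c_j\le\inf_{x\in U_1} g(\eps,x)$ for $\eps$ in the block; since the blocks cover $I$, this holds for all $\eps\in I$, completing the proof. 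The original Lemma A.9 of \cite{MT1997} is recovered by taking $U$ a point, $k=0$, and $g(\eps)=e^{-1/\eps}$.
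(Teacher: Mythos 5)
Your mollification argument is sound in outline but takes a genuinely different, and heavier, route than the paper. The paper does not mollify at all: using continuity of $\eps\mapsto\partial_x^\alpha h(\eps,\cdot)$ (uniformly on $\overline{U_1}$), it picks for each $\eta\in I$ a neighborhood $I_\eta$ on which $\sup_{x\in U_1}\|\partial_x^\alpha h(\eps,x)-\partial_x^\alpha h(\eta,x)\|\le\inf_{x\in U_1}g(\eps,x)$ for all $|\alpha|\le k$, takes a smooth partition of unity $(\phi_\eta)_{\eta\in I}$ subordinate to this cover, and sets $f(\eps,x):=\sum_{\eta}\phi_\eta(\eps)h(\eta,x)$. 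Smoothness is then immediate (each summand is a product of a smooth function of $\eps$ and a smooth function of $x$, the sum being locally finite), and the estimate follows in one line from $\sum_\eta\phi_\eta(\eps)=1$ and the triangle inequality. This sidesteps everything you have to work for: no variable smoothing scale, no change of variables, no dyadic bookkeeping, and no risk of the argument of $h$ leaving $I$. What your convolution buys is a canonical, formulaic $f$ with quantitative dependence on a single scale function, which could be useful elsewhere, but is more machinery than the lemma needs.

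Two points in your write-up need repair, though both are routine. First, for $\eps$ near $1$ and $t<0$ the argument $\eps-\eta(\eps)t$ exceeds $1$, so $h$ is evaluated outside $I$; use a one-sided mollifier with $\supp\varphi\subset(0,1)$, or extend $h$ by $h(1,\cdot)$ for $\eps>1$. Second, a partition-of-unity average of the constants $\delta_j$ over overlapping blocks only satisfies $\eta\le\max(\delta_j,\delta_{j+1})$ on the overlaps, not $\eta<\delta_j$; take the $\delta_j$ decreasing, or blend $\min(\delta_{j-1},\delta_j,\delta_{j+1})$ instead, to get the bound you claim. Finally, both you and the paper use that $\partial_x^\alpha h$ is continuous in $\eps$ uniformly on $\overline{U_1}$; this is the intended reading of the hypothesis (it is what representatives of $\Emco$ provide), but it does not actually follow from separate continuity in $\eps$ and smoothness in $x$ via difference quotients, so it is better stated as part of the hypothesis than derived as you suggest.
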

\begin{proof} Replacing, if necessary, $g$ by $(\eps,x) \mapsto \min(g(\eps,x), \frac{1}{2}d(h(\eps,x),\R^m\setminus W))$, we may without loss of generality suppose that $W=\R^m$.

 By continuity, for each $\eta\in I$ there exists an open neighborhood $I_\eta$ of $\eta$ in $I$ such that
 $$ \sup_{x\in U_1} \|\partial_x^\alpha h(\eps,x) - \partial_x^\alpha h(\eta,x)\| \le \inf_{x\in U_1} g(\eps,x) \qquad (|\alpha|\le k,\, \eps \in I_\eta)$$
 Choose a smooth partition of unity $(\phi_\eta)_{\eta\in I}$ on $I$ with $\supp \phi_\eta \subseteq I_\eta$ for each $\eta$ and set $f(\eps,x) := \sum_{\eta\in I} \phi_\eta(\eps) h(\eta,x)$. Then $f\in \cinfty(I\times U)$ and for any $\eps\in I$, any $x,\, y\in U_1$ and any $|\alpha|\le k$ we obtain 
 \begin{eqnarray*}
\|\partial_x^\alpha h(\eps,x) - \partial_x^\alpha f(\eps,x)\| &\le& \sum_{\eta\in I} \phi_\eta(\eps)
\|\partial_x^\alpha h(\eps,x) - \partial_x^\alpha h(\eta,x)\|\\
&\le& \sum_{\eta\in I} \phi_\eta(\eps) g(\eps,y) = g(\eps,y),
\end{eqnarray*} 
 so the claim follows.
\end{proof}

\begin{lem} \label{approxem}
 Let $U$, $U_1$ be open subsets of $\R^n$ with $\overline U_1 \comp U$. Then given any $(u_\eps)_\eps \in \Emco(U)$ there exists $(v_{\eps})_\eps \in \Emsm(U_1)$ such that $(u_\eps|_{U_1} - v_{\eps})_\eps \in \Nco(U_1)$.
\end{lem}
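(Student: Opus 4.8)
The plan is to apply Lemma \ref{approx} directly, with a carefully chosen error function $g$ that encodes the negligibility estimates we need. Let $(u_\eps)_\eps \in \Emco(U)$ be given. We want a smooth net $(v_\eps)_\eps$ that is moderate on $U_1$ and differs from $(u_\eps)_\eps$ by a continuously parametrized negligible net on $U_1$. The natural candidate for the gauge is something like $g(\eps,x) := e^{-1/\eps}$ (a continuous, strictly positive function on $I \times U$, constant in $x$), which decays faster than any power of $\eps$; alternatively one could take $g(\eps,x) = \eps \cdot e^{-1/\eps}$ or similar, the point only being that $\sup_{x \in U_1} \inf_{x \in U_1} g(\eps,x) = O(\eps^m)$ for every $m$.

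The key subtlety is that Lemma \ref{approx} as stated controls, for each fixed $k \in \N_0$, only the derivatives $\partial_x^\alpha$ with $|\alpha| \le k$, and produces a smooth $f$ depending on $k$. To get a single $(v_\eps)_\eps$ that is moderate with respect to \emph{all} $x$-derivatives simultaneously, I would proceed as follows. First apply Lemma \ref{approx} once with $k = 0$ and $g(\eps,x) = e^{-1/\eps}$ to obtain a smooth $f: I \times U \to \R$ with $\sup_{x\in U_1} |u_\eps(x) - f(\eps,x)| \le e^{-1/\eps}$ for all $\eps$. Set $v_\eps := f(\eps, \cdot)|_{U_1}$. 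Then $(u_\eps|_{U_1} - v_\eps)_\eps$ has sup-norm (no derivatives) bounded by $e^{-1/\eps}$, which is $O(\eps^m)$ for all $m$; the real work is to upgrade this to the estimates on all derivatives required for membership in $\Nco(U_1)$, and to show $(v_\eps)_\eps \in \Emsm(U_1)$.

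I expect the cleanest route to handle the derivatives is to shrink slightly: pick an intermediate open set $U_1 \comp U_2 \comp U$ and apply the lemma on $U_2$ rather than $U_1$, then on the smaller set $U_1$ use interior elliptic-type / Landau–Kolmogorov interpolation inequalities (or simply the standard fact that for a smooth function on $\R^n$, control of the $C^0$ norm on a slightly larger set together with control of, say, $C^{k+2}$ norms there bounds the $C^k$ norm on the smaller set with a loss). More concretely: since $(u_\eps)_\eps \in \Emco(U)$, it is moderate, so $\partial_x^\alpha u_\eps$ is $O(\eps^{-N_\alpha})$ on compact subsets; for a single smooth approximant to inherit negligibility of \emph{all} derivatives, I would instead note that the difference $w_\eps := u_\eps|_{U_2} - v_\eps$ is itself continuous in $\eps$ and smooth in $x$, has $\|w_\eps\|_{L^\infty(U_2)} \le e^{-1/\eps}$, and — since $u_\eps$ is moderate and $v_\eps$ will be seen to be moderate — has all $x$-derivatives of at most polynomial growth in $1/\eps$ on $U_1$. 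An interpolation inequality of the form $\|\partial_x^\alpha w_\eps\|_{L^\infty(U_1)} \le C \|w_\eps\|_{L^\infty(U_2)}^{\theta} \|w_\eps\|_{C^{2|\alpha|}(U_2)}^{1-\theta}$ then forces each $\|\partial_x^\alpha w_\eps\|_{L^\infty(U_1)}$ to be $O(\eps^m)$ for every $m$, i.e.\ $(w_\eps|_{U_1})_\eps \in \Nco(U_1)$. This also immediately gives $(v_\eps)_\eps = (u_\eps|_{U_1})_\eps - (w_\eps|_{U_1})_\eps \in \Emsm(U_1)$, since $(u_\eps)_\eps$ is moderate and $(w_\eps)_\eps$ is negligible (hence moderate), and $v_\eps$ is smooth in both variables by construction.

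The main obstacle is precisely this passage from the fixed-order estimate supplied by Lemma \ref{approx} to simultaneous control of all $x$-derivatives: one must either invoke an interpolation (Landau–Kolmogorov / Gagliardo–Nirenberg) inequality on the nested relatively compact sets, or — and this is perhaps the more self-contained option — apply Lemma \ref{approx} with a \emph{decreasing} family of gauges $g_k$ and a nested exhaustion, and then splice the resulting countably many smooth approximants together via a partition of unity in the $\eps$ variable supported on a shrinking sequence of intervals accumulating at $0$, so that near $0$ the approximant agrees with the one controlling arbitrarily high order. I would pursue the interpolation-inequality approach first as it is shorter; the partition-of-unity-in-$\eps$ gluing is the fallback if one wants to avoid citing interpolation estimates. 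Everything else (positivity and continuity of the gauge, moderateness bookkeeping, that $e^{-1/\eps} = O(\eps^m)$) is routine.
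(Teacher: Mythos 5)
Your primary route (the interpolation inequality) contains a circularity that cannot be removed using only the \emph{statement} of Lemma \ref{approx}. To run the Landau--Kolmogorov estimate $\|\partial_x^\alpha w_\eps\|_{L^\infty(U_1)} \le C\,\|w_\eps\|_{L^\infty(U_2)}^{\theta}\,\|w_\eps\|_{C^{2|\alpha|}(U_2)}^{1-\theta}$ for \emph{every} $\alpha$, you need all $x$-derivatives of $w_\eps$, hence of $v_\eps$, to be polynomially bounded in $1/\eps$. But a single application of Lemma \ref{approx} with a fixed $k$ gives no information at all about $\partial_x^\alpha f$ for $|\alpha|>k$: a function that is $e^{-1/\eps}$-close to $u_\eps$ in $C^k$ can have arbitrarily wild higher derivatives (add $e^{-2/\eps}\sin(e^{e^{1/\eps}}x_1)$ to see this for $k=0$). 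You only obtain moderateness of $v_\eps$ at the very end, as a consequence of the negligibility of $w_\eps$ that the interpolation is supposed to deliver --- so the required input is exactly as strong as the desired output. The gap could be repaired by going into the \emph{proof} of Lemma \ref{approx} (the approximant there is a convex combination $\sum_\eta \phi_\eta(\eps)\,u_\eta$, and after shrinking each $I_\eta$ to lie in, say, $(\eta/2,2\eta)$ its derivatives of all orders inherit the moderateness bounds of $u$), but your write-up does not supply that extra argument.

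Your fallback is essentially the paper's actual proof and should be promoted to the main argument. For each $n\in\N_0$ apply Lemma \ref{approx} with $k=n$ and the single gauge $g(\eps,x)=e^{-1/\eps}$ to obtain $v_n\in\Cinf(I\times U)$ with $\sup_{x\in U_1}\|\partial_x^\alpha u_\eps(x)-\partial_x^\alpha v_{n,\eps}(x)\|\le e^{-1/\eps}$ for all $|\alpha|\le n$; then choose a smooth partition of unity $(\chi_n)_n$ in the $\eps$-variable subordinate to the cover $I_n:=(\frac{1}{n+2},\frac{1}{n})$ ($n\ge 2$), $I_1:=(\frac{1}{3},1]$, and set $v_\eps:=\sum_n\chi_n(\eps)v_{n,\eps}$. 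For $\eps\le\frac{1}{k+2}$ only the terms with $n\ge k+1$ survive, and each of those already satisfies the order-$k$ estimate, so the convex combination does too; this gives $(u_\eps|_{U_1}-v_\eps)_\eps\in\Nco(U_1)$ and hence $(v_\eps)_\eps\in\Emsm(U_1)$ directly, with no interpolation, no intermediate set $U_2$, and no decreasing family of gauges.
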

\begin{proof}
 By Lemma \ref{approx}, for each $n\in \N_0$ there exists $v_n \in \Cinf(I\times U)$ such that for all $|\alpha|\le n$ and all $\eps\in I$,
\begin{equation}\label{eq:vie}
\sup_{x\in U_1}\|\partial^\alpha u_\eps(x) - \partial^\alpha v_{n,\eps}(x)\| \le e^{-\frac{1}{\eps}}
\end{equation}
 Let $\mathcal{I} =  (I_n)_{n \in \N}$ be the open cover of $I$ defined by $I_n := ( \frac{1}{n+2}, \frac{1}{n} )$ for $n \geq 2$ and $I_1 := (\frac{1}{3}, 1]$. Choose a smooth partition of unity $(\chi_n)_{n \in \N}$ with $\supp{\chi_n} \subseteq I_n$ $\forall n$. For $x \in U_1$ and $\eps \in I$ let
\begin{equation}\label{eq:ve}
 v_{\eps}(x) := \sum_{n=1}^{\infty} \chi_n(\eps) v_{n,\eps}(x).
\end{equation}
 Obviously, $v$ is smooth in $x$ and $\eps$. It remains to be shown that $(v_{\eps} - u_{\eps})_{\eps}$ is negligible on $U_1$. Fix $K\comp U_1$ and $k \in \N_0$. Then for $\eps \leq \frac{1}{k+2}$ and any $\alpha$ with $|\alpha|\le k$ we have that
$$ \sup_{x \in K} \|\partial^\alpha v_{\eps}(x) - \partial^\alpha u_{\eps}(x)\| \stackrel{\eqref{eq:ve}}{\leq}  
  \sum_{n=k+1}^{\infty} \chi_n(\eps) \sup_{x \in U_1}\|\partial^\alpha v_{n,\eps}(x) - 
  \partial^\alpha u_{\eps}(x)\|  \stackrel{\eqref{eq:vie}}{\le} e^{-\frac{1}{\eps}}.
$$
 Thus $(v_{\eps} - u_{\eps})_{\eps} \in \Nco(U_1)$, and therefore also $(v_{\eps})_{\eps} \in \Emsm(U_1)$. 
\end{proof}

 From these preparations we conclude

\begin{thm} \label{GsmGco}
 The map $\tau_{sm}: \Gsm(M) \to \Gco(M)$ is an isomorphism.
\end{thm}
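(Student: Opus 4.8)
The plan is to prove surjectivity of $\tau_{sm}:\Gsm(M)\to\Gco(M)$ by a partition-of-unity argument in the base manifold $M$, using Lemma \ref{approxem} as the local building block. Injectivity and the homomorphism property are already given by Lemma \ref{gf:emb1}, so only surjectivity remains. Fix $u=[(u_\eps)_\eps]\in\Gco(M)$; the goal is to produce $(v_\eps)_\eps\in\Emsm(M)$ with $(u_\eps-v_\eps)_\eps\in\Nco(M)$.

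First I would choose a suitable atlas. Since $M$ is second countable, pick a countable, locally finite family of charts $(V_j,\psi_j)_{j\in\N}$ together with relatively compact open sets $W_j\comp V_j$ still covering $M$, chosen so that $\psi_j(W_j)\comp\psi_j(V_j)=:U_j\subseteq\R^n$; write $U_{j,1}:=\psi_j(W_j)$, so $\overline{U_{j,1}}\comp U_j$. On each chart, the net $(u_\eps\circ\psi_j^{-1})_\eps$ lies in $\Emco(U_j)$, so Lemma \ref{approxem} yields $(v^{(j)}_\eps)_\eps\in\Emsm(U_{j,1})$ with $(u_\eps\circ\psi_j^{-1}|_{U_{j,1}}-v^{(j)}_\eps)_\eps\in\Nco(U_{j,1})$. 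Transport these back to $M$, obtaining smooth-in-$(\eps,x)$ functions $\tilde v^{(j)}_\eps$ on $W_j$ that differ negligibly from $u_\eps$ there. Now take a smooth partition of unity $(\zeta_j)_{j\in\N}$ on $M$ subordinate to $(W_j)_j$ and set $v_\eps:=\sum_j\zeta_j\,\tilde v^{(j)}_\eps$. Since the partition of unity depends only on $x$ and is locally finite, $v$ is smooth in $x$ and in $\eps$, and a standard estimate shows $v_\eps-u_\eps=\sum_j\zeta_j(\tilde v^{(j)}_\eps-u_\eps)$ is negligible: on any $K\comp M$ only finitely many $j$ contribute, each term is $\Nco$, and the $\zeta_j$-derivatives are uniformly bounded, so $(v_\eps-u_\eps)_\eps\in\Nco(M)$, hence also $(v_\eps)_\eps\in\Emsm(M)$. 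Thus $\tau_{sm}([(v_\eps)_\eps])=[(u_\eps)_\eps]=u$, proving surjectivity.

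One technical point I would be careful about: Lemma \ref{approxem} produces the smooth net only on the smaller open set $U_{j,1}$, not on all of $U_j$, which is why one works with the pair $W_j\comp V_j$ and transports $v^{(j)}_\eps$ defined on $\psi_j(W_j)$; multiplying by $\zeta_j$ (supported inside $W_j$) then extends $\zeta_j\tilde v^{(j)}_\eps$ by zero to a smooth function on all of $M$. I would also note that negligibility is a local condition (checked on compact sets, which meet only finitely many $W_j$), so there is no issue with the infinite sum.

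The main obstacle is essentially bookkeeping rather than a deep difficulty: one must ensure that all the smoothness (in particular joint smoothness in $(\eps,x)$) is preserved under the coordinate transports and under the partition-of-unity gluing, and that the moderateness/negligibility estimates, which in Lemma \ref{approxem} are only asserted for $x$ in a fixed relatively compact subset, patch together correctly over the countable cover. Since $M$ is second countable the cover can be taken locally finite, which makes the sum defining $v_\eps$ locally finite and the derivative estimates manageable; this is the step I would write out most carefully.
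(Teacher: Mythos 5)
Your proposal is correct and follows essentially the same route as the paper: Lemma \ref{gf:emb1} for injectivity, Lemma \ref{approxem} as the local building block, and a locally finite partition of unity to glue the smooth local approximations into a global $(v_\eps)_\eps\in\Emsm(M)$ with $(u_\eps-v_\eps)_\eps\in\Nco(M)$. The only cosmetic difference is that the paper first invokes the sheaf property of $\Gco$ and $\Gsm$ to reduce to an open subset of $\R^n$ before patching, whereas you carry out the chart transport explicitly on the manifold; the substance of the argument is identical.
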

\begin{proof}
 Since both $\Gco(\_)$ and $\Gsm(\_)$ are sheaves of differential algebras we may without loss of generality suppose that $M$ is an open subset of $\R^n$. Furthermore, by Lemma~\ref{gf:emb1}, it remains to be shown that $\tau_{sm}$ is surjective. To this end let $u=[(u_\eps)_\eps]\in \Gco(M)$. Choose a locally finite open cover $(U_\alpha)_{\alpha\in A}$ of $M$ such that $\overline{U_\alpha} \comp M$ for all $\alpha$. Let $(\chi_\alpha)_{\alpha\in A}$ be a partition of unity on $M$ with $\supp \chi_\alpha \subseteq U_\alpha$ for all $\alpha$. By Lemma \ref{approxem}, for each $\alpha\in A$ there exists some $(v_{\alpha,\eps})_\eps\in \Emsm(U_\alpha)$ such that $(u_\eps|_{U_\alpha} - v_{\alpha,\eps})\in \Nco(U_\alpha)$. Then $v_\eps := \sum_\alpha \chi_\alpha v_{\alpha,\eps}$ defines an element $(v_\eps)_\eps$ of $\Emsm(M)$ and by construction, $\tau_{sm}([(v_\eps)_\eps]) = u$.
\end{proof}

The set of generalized numbers $\Ksm$ can be identified with the set of constant generalized functions in $\Gsm(M)$ via $[(r_{\eps})_{\eps}] \mapsto [(u_{\eps})_{\eps}]$, $u_{\eps}(x) := r_{\eps}$ for all $\eps \in I$, $x \in M$. The same is true for $\Kco$ and the set of constant functions in $\Gco(M)$. Thus Theorem \ref{iso} can also be viewed 
as an immediate corollary of Theorem~\ref{GsmGco}.


 A result analogous to Theorem~\ref{GsmGco} also holds for manifold-valued generalized functions from $M$ to $N$. Also in this case we define $\tau_{co}$ and $\tau_{sm}$ to be the natural embeddings, i.e.\ $[(u_{\eps})_{\eps}] \rightarrow [(u_{\eps})_{\eps}]$:
$$\xymatrix{\Gsm[M,N] \;\; \ar@/_1cm/@{^(->}[rrrr]^{\tau_{co} \circ \tau_{sm}} \ar@{^(->}[rr]^{\tau_{sm}} \, && \Gco[M,N] \ar@{^(->}[rr]^{\tau_{co}} \, && \G[M,N]}.$$
 Similarly to Lemma \ref{gf:emb1} we have that these maps are well-defined and injective, using \cite[Def.~2.2 and Def.~2.4]{K2002}. Building on Theorem \ref{GsmGco} we can now show:

\begin{thm}\label{GsmGcomf}
 The map $\tau_{sm}: \Gsm[M,N] \to \Gco[M,N]$ is bijective. 
\end{thm}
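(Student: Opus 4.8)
The plan is to reduce the manifold-valued statement to the scalar-valued case already established in Theorem~\ref{GsmGco}, by exploiting the local description of generalized functions valued in $N$ in terms of charts. Recall (cf.\ \cite{K2002,KSV2003}) that an element of $\Gco[M,N]$ is an equivalence class of nets $(u_\eps)_\eps$ of smooth maps $u_\eps: M \to N$, depending continuously on $\eps$, satisfying a moderateness condition expressed in charts (boundedness of the chart-expressions and their derivatives on compact sets, uniformly for the relevant $\eps$-range), with equivalence measured via a Riemannian distance on $N$ together with all derivatives in charts. Injectivity of $\tau_{sm}$ was already noted above, so the whole content is surjectivity.

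First I would fix $u=[(u_\eps)_\eps]\in \Gco[M,N]$ and choose an atlas adapted to the situation: a locally finite cover $(V_\alpha)_{\alpha\in A}$ of $M$ by relatively compact chart domains together with a slightly shrunk cover $(V_\alpha')_{\alpha\in A}$, still covering $M$, with $\overline{V_\alpha'}\comp V_\alpha$, and for each $\alpha$ a chart $(W_\alpha,\psi_\alpha)$ on $N$ such that $u_\eps(\overline{V_\alpha'})\subseteq W_\alpha$ for all $\eps$ sufficiently small; the compactness-based arguments in \cite{K2002} guarantee such a choice (after possibly further shrinking, the chart-expression $\psi_\alpha\circ u_\eps\circ\varphi_\alpha^{-1}$ is defined on a fixed open set for small $\eps$, and can be extended or modified on a fixed $\eps$-range). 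On each such piece, $\psi_\alpha\circ u_\eps\circ\varphi_\alpha^{-1}$ is a $\Gco$-function into $\R^{\dim N}$; apply Lemma~\ref{approxem} (component-wise) to obtain a smooth-in-$\eps$ net $v_{\alpha,\eps}$ on $V_\alpha'$ that is $\Nco$-close to it, and — shrinking $g$ as in the proof of Lemma~\ref{approx} — arrange that $v_{\alpha,\eps}$ still takes values in $\psi_\alpha(W_\alpha)$, so that $\tilde v_{\alpha,\eps} := \psi_\alpha^{-1}\circ v_{\alpha,\eps}\circ\varphi_\alpha$ is a smooth-in-$\eps$ map $V_\alpha'\to N$ with $d_h(\tilde v_{\alpha,\eps}, u_\eps)$ and all chart-derivatives negligible on compacta.

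The main obstacle is gluing: unlike the scalar case, one cannot simply form $\sum_\alpha \chi_\alpha \tilde v_{\alpha,\eps}$, since $N$ has no linear structure. I would handle this by a standard manifold patching in the image of a suitable uniform tubular neighborhood, or more concretely by induction over the (countably many, locally finitely overlapping) charts: having defined a smooth-in-$\eps$ global approximant $w_\eps^{(k)}$ agreeing up to $\Nco$-error with $u_\eps$ on $V_1'\cup\dots\cup V_k'$, one interpolates between $w_\eps^{(k)}$ and $\tilde v_{k+1,\eps}$ on the overlap region using a fixed smooth cutoff in $x$ and the exponential map of a fixed Riemannian metric on $N$ — i.e.\ $w_\eps^{(k+1)}(x) := \exp_{w_\eps^{(k)}(x)}\big(\chi(x)\,\exp_{w_\eps^{(k)}(x)}^{-1}(\tilde v_{k+1,\eps}(x))\big)$, which makes sense because on the overlap the two approximants are within injectivity radius of each other for small $\eps$ (both being $\Nco$-close to $u_\eps$), is smooth in $\eps$ and $x$, and remains $\Nco$-close to $u_\eps$ since the $\exp$ correction is itself negligible together with its derivatives in charts. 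Local finiteness of the cover makes this induction stabilize locally, yielding a globally defined smooth-in-$\eps$ net $v_\eps: M\to N$; one then checks moderateness of $(v_\eps)_\eps$ in $\Gsm[M,N]$ (inherited from that of $(u_\eps)_\eps$ via the negligible chart-wise corrections) and that $\tau_{sm}([(v_\eps)_\eps]) = u$ by construction, exactly as in the proof of Theorem~\ref{GsmGco}.

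Throughout, the routine verifications — that shrinking $g$ keeps values in the chart, that the $\exp$-interpolation is smooth and close in the required seminorms, that the patched net is moderate — are analogous to the scalar computations in Lemmas~\ref{approx} and \ref{approxem} and in \cite[\S2]{K2002}; the only genuinely new ingredient is the exponential-map patching replacing the affine partition-of-unity sum, and the observation that negligibility in the $\Gco[M,N]$-sense forces nearby approximants to lie within a common normal neighborhood for small $\eps$, which is what makes that patching well-defined.
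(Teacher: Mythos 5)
Your reduction to charts on $N$ breaks down at the very first step. You claim that, after shrinking, one can choose for each relatively compact $V_\alpha'\subseteq M$ a single chart $(W_\alpha,\psi_\alpha)$ on $N$ with $u_\eps(\overline{V_\alpha'})\subseteq W_\alpha$ for all small $\eps$. This is false in general: c-boundedness only confines $u_\eps(\overline{V_\alpha'})$ to a fixed \emph{compact} subset of $N$, not to a chart domain. For example, with $M=\R$, $N=S^1$ and $u_\eps(x)=(\cos(x/\eps),\sin(x/\eps))$ one has a perfectly good element of $\Gsm[M,N]\subseteq\Gco[M,N]$ (c-bounded, moderate, even smooth in $\eps$), yet $u_\eps(\overline{V'})=S^1$ for every nonempty open $V'$ once $\eps$ is small, so no chart of $S^1$ ever contains the image, no matter how much you shrink in $M$. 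Rescuing a chart-based localization would force you to localize jointly in $(\eps,x)$, which destroys the clean structure of your induction. This is precisely why the paper avoids charts on $N$ altogether: it fixes a Whitney embedding $N\hookrightarrow\R^s$, applies Theorem~\ref{GsmGco} componentwise to get a smooth-in-$\eps$ net $v'$ with $(u_\eps-v_\eps')_\eps\in\Nco(M)^s$, and then pushes $v'$ back onto $N$ with the retraction of a tubular neighborhood --- a global linearization in place of your chart-by-chart one.

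A second, independent problem is that your exponential-map interpolation is only defined where the two approximants are within the injectivity radius of each other, which you can only guarantee for $\eps\le\eps_\alpha$ with $\eps_\alpha$ depending on the compact piece $V_\alpha'$. Since $M$ need not be compact, these thresholds can tend to $0$, and you still owe a globally defined smooth net on all of $I\times M$; saying ``for small $\eps$'' does not produce one. The paper confronts exactly this issue and resolves it with the reparametrization $v_\eps(x):=v''_{\mu(\eps,x)}(x)$, $\mu(\eps,x)=\eta(x)\nu(\eps/\eta(x))$, which rescales $\eps$ locally in $x$ so that the retracted net takes values in $N$ everywhere while agreeing with $v''_\eps$ on each $K_l^\circ$ for $\eps$ small. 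Your $\exp$-gluing idea is a reasonable substitute for the affine partition-of-unity sum \emph{if} local $N$-valued approximants were available, but without repairing the chart-containment step and without an analogue of the reparametrization trick, the proposed proof does not go through.
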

\begin{proof}
 By \cite[Prop.~2.2]{KSV2009}, given any Whitney-embedding $i: N\hookrightarrow \R^s$, we may identify $\Gsm[M,N]$ with the subspace $\tilde{\mathcal G}_{sm}[M,i(N)]$ of $\Gsm(M)^s$. 
 The proof of that result carries over verbatim to the $\Gco$-setting. Therefore, we may without loss of generality suppose that $N$ is a submanifold of some $\R^s$. Let $T$ be a tubular neighborhood of $N$ in $\R^s$ with retraction map $r: T \to N$ (cf., e.g., \cite{H1994} or \cite{Lee2003}).

Let $u\in \Gco[M,N]$. By Theorem \ref{GsmGco} there exists $v'\in \Gsm(M)^s$ such that $(u_\eps -  v_\eps')_\eps \in \Nco(M)^s$ (hence, in particular, $v'$ is c-bounded, i.e., $v'\in \Gsm[M,\R^s]$). We will now suitably modify $v'$ such that the resulting element of $\Gsm[M,N]$ equals $u$ in $\Gco[M,N]$. To this end we follow a similar path as that taken in the proof of \cite[Thm.~2.3]{KSV2009}. Let $T'\subseteq T$ be a closed tubular neighborhood of $N$.
 Using a partition of unity subordinate to $\{T,\R^s\setminus T'\}$ we obtain a map $\tilde r: \R^s \to \R^s$ which coincides with $r$ on $T'$. 
 Let $(K_l)_l$ be a compact exhaustion of $M$ with $K_l\subseteq K_{l+1}^\circ$ for all $l$. Since each $u_\eps$ is c-bounded with values in $N$ and $(u_\eps- v_\eps')_\eps$ is negligible, for each $l$ there exists a compact set $K_l' \subseteq T'$ and an $\eps_l>0$ (without loss of generality $\eps_l < \eps_{l-1}$) such that $u_\eps(K_l)\cup  v_\eps'(K_l) \subseteq K_l'$ for all $\eps \le \eps_l$.

 For $\eps\in I$ we set $v_\eps'':= \tilde r \circ v_\eps'$. Then $v'' \in \Gsm[M,\R^s]$ and for $x\in K_l$ and $\eps<\eps_l$ we have (denoting by $\mathrm{ch}(K_l')$ the convex hull of $K_l'$)
\begin{eqnarray*}
\|u_\eps(x) - v_\eps''(x)\| &=& \|\tilde r\circ u_\eps(x) - \tilde r\circ v_\eps'(x)\|\\ 
&\le& \|D \tilde r\|_{L^\infty(\mathrm{ch}(K_l'))} \cdot \|u_\eps(x) - v_\eps'(x)\| = O(\eps^m)
\end{eqnarray*}
 for each $m$ by construction of $v'$. 

 Now let $\eta: M \to \R$ be smooth such that $0<\eta(x)\le \eps_l$ for all $x\in K_l\setminus K_{l-1}^\circ$ ($K_0:=\emptyset$) (cf.\ \cite[Lem.~2.7.3]{GKOS2001}). Moreover, let $\nu: \R^+_0 \to [0,1]$ be a smooth function satisfying 
$\nu(t)\le t$ for all $t$ and
$$
\nu(t) = \left\{
\begin{array}{ll}
t & 0 \le t \le \frac{1}{2} \\
1 & t \ge \frac{3}{2}                     
\end{array}
\right.
$$
 For $(\eps,x)\in I\times M$ let $\mu(\eps,x) := \eta(x) \nu\!\left(\frac{\eps}{\eta(x)}\right)$. Then we may define $v_\eps(x) := v''_{\mu(\eps,x)}(x)$ for $(\eps,x)\in I\times M$. It follows that $v\in \Gsm[M,N]$. Since $v_\eps|_{K_l^\circ} =v_\eps''|_{K_l^\circ}$ for $\eps \le \frac{1}{2} \min_{x\in K_l}\eta(x)$ and any  $l\in \N\,$, $(u_\eps - v_\eps)_\eps$ satisfies the negligibility estimate of order $0$ on any compact subset of $M$. Thus, by \cite[Thm.~3.3]{KSV2003}, we conclude that $u=v$ in $\Gco[M,N]$.
\end{proof}

\begin{rem}
 Similar techniques can be used to show that the smooth and continuous variants of the spaces of generalized vector bundle homomorphisms and of hybrid generalized functions (see \cite{K2002, KS2002, KSV2003} for definitions and characterizations of these spaces) can be identified.
\end{rem}


\section{Algebraic properties of $\Ksm = \Kco$} \label{algprop}

Above we have seen that $\Kco$ and $\Ksm$ are algebraically isomorphic, and are proper subrings of $\Kt$. The aim of the present section is to initiate the investigation of algebraic properties of $\Ksm$ along the lines of
\cite{AJ01,AJOS08,V10}. 
In particular, we point out similarities and differences between the spaces $\Ksm$ and $\Kt$. 


\subsection{Non-invertible elements are zero divisors} \label{zerosec}
By \cite[Sec.\ 2.1]{V10}, $\Kt$ is a reduced ring, i.e., a commutative ring without non-zero nilpotent elements. As
$\Ksm$ is a subring of $\Kt$, it inherits this property

%
%
%

A fundamental property of $\Kt$ is that the non-invertible elements and the zero divisors in $\Kt$ coincide
(see \cite[Thm.\ 2.18]{AJ01}, \cite[Thm.~1.2.39]{GKOS2001}). The same holds true for $\Ksm$:

\begin{prop}
 An element $r \in \Ksm$ is non-invertible if and only if it is a zero divisor.
\end{prop}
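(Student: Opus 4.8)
The plan is to prove both implications. One direction is immediate: any zero divisor in a commutative ring with identity is non-invertible, since if $rs = 0$ with $s \neq 0$ and $r$ had an inverse $r^{-1}$, we would get $s = r^{-1} r s = 0$, a contradiction. So the substance lies in showing that every non-invertible element $r = [(r_\eps)_\eps] \in \Ksm$ is a zero divisor.

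For the hard direction, I would start from the characterization of invertibility in $\Ksm$ (equivalently in $\Kco$): $r$ is invertible if and only if it is \emph{strictly nonzero}, i.e., there exists $m \in \N$ and $\eps_0 > 0$ with $|r_\eps| \ge \eps^m$ for all $\eps < \eps_0$. Hence, if $r$ is non-invertible, for every $m \in \N$ there is a sequence $\eps_k \downarrow 0$ (depending on $m$) with $|r_{\eps_k}| < \eps_k^m$. From this I want to extract, for a single suitably chosen representative, an open set $S \subseteq I$ with $0 \in \overline{S}$ on which $r_\eps$ is ``very small'' (negligible-order small, uniformly in the relevant sense), and then build a companion element $s = [(s_\eps)_\eps]$ supported essentially on $S$, nonzero in $\Ksm$, such that $(r_\eps s_\eps)_\eps$ is negligible. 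In the classical $\Kt$ setting one simply takes $s_\eps$ to be a characteristic-type function of such a set; here the obstacle is precisely that we need $s$ to admit a \emph{smooth} (or at least continuous) parametrization, so we cannot use characteristic functions of arbitrary sets.

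The key idea to overcome this is to replace the characteristic function by a smooth bump-type net. Concretely, using that $r$ is non-invertible I would produce a decreasing sequence $a_k \downarrow 0$ in $I$ such that on a neighborhood of each $a_k$ the quantity $|r_\eps|$ is smaller than, say, $e^{-1/\eps}$ (or at least smaller than $a_k^k$), while choosing the $a_k$ sparse enough that the complementary behavior still leaves $r$ itself non-negligible if it is nonzero, or handling the case $r = 0$ trivially (if $r = 0$ it is a zero divisor since $r \cdot 1 = 0$ and we may pick $s=1$; more carefully, $0$ is a zero divisor as $0 \cdot s = 0$ for any nonzero $s$, and $\Ksm$ is not the zero ring). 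Then take a smooth partition-of-unity-style function $s_\eps = \sum_k \chi_k(\eps)$ where each $\chi_k$ is a smooth bump with support in the chosen neighborhood of $a_k$ and $\chi_k(a_k) = 1$. This $(s_\eps)_\eps$ is moderate (in fact bounded) and smooth in $\eps$, so $[(s_\eps)_\eps] \in \Ksm$; it is not negligible because $s_{a_k} = 1$ does not tend to $0$ faster than every power of $a_k$; and $(r_\eps s_\eps)_\eps$ is negligible because on the union of the supports $|r_\eps|$ is super-polynomially small, while off that union $s_\eps = 0$. Hence $r \cdot s = 0$ in $\Ksm$ with $s \neq 0$, so $r$ is a zero divisor.

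The main obstacle I anticipate is the bookkeeping needed to guarantee simultaneously that (a) the chosen neighborhoods of the points $a_k$ can be taken disjoint and shrinking fast enough that $s_\eps$ is genuinely smooth with locally finite sum and bounded, (b) on those neighborhoods $|r_\eps|$ really is negligibly small for the \emph{chosen} representative — this requires care because non-invertibility only gives us, for each $m$, \emph{some} small $\eps$ with $|r_\eps| < \eps^m$, and one must run a diagonal argument to get a single sequence $a_k$ working for all orders $m \le k$ and then shrink the neighborhoods using continuity of $(r_\eps)_\eps$ (available since we are in $\Kco \cong \Ksm$) so that the smallness persists on a whole neighborhood, and (c) $s$ is not negligible, which is clear once $s_{a_k} = 1$. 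An alternative, cleaner route would be to transfer the problem: since $\Ksm \cong \Kco$, work in $\Kco$ where continuity of representatives makes the neighborhood-shrinking in step (b) automatic via the intermediate-value-type arguments already used in the proofs of Proposition~\ref{gen:idem2} and Lemma~\ref{emb2}, and then pull the resulting zero divisor back through $\tau_{sm}^{-1}$; applying Lemma~\ref{approx} to the bump net is unnecessary since it is already smooth, but it could be invoked if one prefers to first construct $s$ continuously and then smooth it.
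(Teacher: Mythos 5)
Your proposal is correct and follows essentially the same route as the paper's proof: reduce non-invertibility to failure of strict nonzeroness, diagonalize over $m$ to obtain a single sequence $\eps_j \searrow 0$ with $|r_{\eps_j}| < \eps_j^j$, use continuity of the representative to spread this smallness to disjoint neighborhoods $(a_j,b_j)$, and sum smooth bumps supported there to produce a nonzero element $s\in\Emsm$ with $(r_\eps s_\eps)_\eps$ negligible. The only cosmetic differences are that you separately dispose of the trivial direction and of the case $r=0$, both of which the paper leaves implicit.
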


\begin{proof}
 
 Let $r$ be non-invertible.
 By \cite[Thm.~1.2.38]{GKOS2001} we have that $r$ is not strictly non-zero (the proof carries
 over unchanged to the $\Ksm$-setting, cf.\ also \cite[Prop.~6.2.5]{B2009} for a generalization), i.e.\ for all representatives $(r_{\eps})_{\eps}$ of $r$ and all $m \in \N$ there exists a strictly decreasing sequence $\eps_k \searrow 0$ such that $\vert r_{\eps_k} \vert < \eps_k^m$. By varying $m$ we obtain a sequence $(\eps_j)_j$, $\eps_j \searrow 0$, such that
\begin{equation*}
 \vert r_{\eps_j} \vert < \eps_j^j \quad \forall j \in \N.
\end{equation*}
 Since $\{ \eps_j \}_{j \in \N}$ is discrete in $(0,1]$ we may find disjoint neighborhoods $(a_j,b_j) \ni \eps_j$ such that
\begin{equation*}
 \vert r_{\eps} \vert < \eps^j \quad \forall \eps \in (a_j,b_j).
\end{equation*}
 On each such interval $(a_j,b_j)$ there exists a smooth bump function $\chi_j \in \mathcal{D}(a_j,b_j)$ such that $\chi_j(\eps_j)=1$, $0\le \chi_j \leq 1$. Let $(s_{\eps})_{\eps}$ be defined by
\begin{equation*}
 s_{\eps} := \left\lbrace \begin{array}{ll}
			\chi_j(\eps) & \text{if } \eps \in (a_j,b_j) \\
			0 & \text{else}
                    \end{array} \right. .
\end{equation*}
 Obviously, $(s_{\eps})_{\eps} \in \Emsm \setminus \Nsm$. Moreover, $(r_{\eps}s_{\eps})_{\eps} \in \Nsm$, hence $r$ is a zero divisor of $\Ksm$.
\end{proof}

\subsection{Exchange rings}

 There are various equivalent definitions for exchange rings (see, e.g., \cite[Sec.~2.2]{V10}). The most convenient one for our purposes is

\begin{defi}
 A commutative ring $R$ with $1$ is an \emph{exchange ring} if for each $r \in R$ there exists an idempotent $e \in R$ such that $r+e$ is invertible.
\end{defi}

 By \cite[Prop.~2.1]{V10}, $\Kt$ is an exchange ring. The situation is different for $\Ksm$.

\begin{lem}
 $\Ksm$ is not an exchange ring.
\end{lem}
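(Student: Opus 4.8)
The plan is to exploit the key fact established in Proposition \ref{gen:idem2} (and the remark following it): $\Ksm$ has no nontrivial idempotents, i.e.\ the only idempotents are $0$ and $1$. If $\Ksm$ were an exchange ring, then for every $r \in \Ksm$ there would be an idempotent $e \in \{0,1\}$ with $r + e$ invertible; that is, for each $r$, either $r$ or $r+1$ would have to be invertible. So it suffices to exhibit a single element $r \in \Ksm$ such that neither $r$ nor $r+1$ is invertible in $\Ksm$.

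To construct such an $r$, I would recall the characterization of invertibility in these rings: $r = [(r_\eps)_\eps]$ is invertible in $\Ksm$ (equivalently in $\Kco$, or in $\Kt$) precisely when $r$ is strictly nonzero, i.e.\ there exist $m \in \N$ and $\eps_0 > 0$ with $|r_\eps| \ge \eps^m$ for all $\eps < \eps_0$ (for some, hence any, representative). The idea is to pick a \emph{continuous} (indeed smooth) representative that oscillates between being negligibly small near a sequence $\eps \searrow 0$ and being close to $-1$ near an interleaved sequence. Concretely, choose disjoint intervals accumulating at $0$, say around points $a_k \searrow 0$ and $b_k \searrow 0$ with $a_k, b_k$ interlaced, and let $r_\eps$ be a smooth function with $r_\eps = 0$ for $\eps$ near each $a_k$ and $r_\eps = -1$ for $\eps$ near each $b_k$, interpolated smoothly and boundedly in between; then $(r_\eps)_\eps \in \Emsm$. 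Since $r_{a_k} = 0$ along $a_k \searrow 0$, $r$ is not strictly nonzero, so $r$ is not invertible; and since $(r+1)_{b_k} = 0$ along $b_k \searrow 0$, $r+1$ is not strictly nonzero either, so $r+1$ is not invertible. As the only idempotents available are $0$ and $1$, no idempotent $e$ makes $r+e$ invertible, so $\Ksm$ fails the exchange property.

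I expect the only mildly delicate point to be the bookkeeping that ensures $(r_\eps)_\eps$ is genuinely smooth in $\eps$ on all of $I$ and moderate — this is handled exactly as in the zero-divisor proof above, using smooth bump functions supported on disjoint intervals $(a_j, b_j)$ and partition-of-unity-style interpolation, which poses no real obstacle since the target values $0$ and $-1$ are bounded and the construction lives far from $\eps = 1$. One should also double-check that $r \notin \Nsm$ (it is not, since $r_{b_k} = -1$) so that $r$ is a bona fide element distinct from nice cases, though this is not even needed for the argument. The substantive content is entirely the absence of nontrivial idempotents; everything else is routine.

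\begin{proof}
 By Proposition \ref{gen:idem2} and the remark following it, the only idempotents in $\Ksm$ are $0$ and $1$. Hence, if $\Ksm$ were an exchange ring, then for every $r \in \Ksm$ at least one of $r$, $r+1$ would be invertible. We construct an $r$ for which this fails.

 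Choose a strictly decreasing sequence $\eps_k \searrow 0$ and pairwise disjoint intervals $(a_k, b_k) \subseteq I$ with $\eps_k \in (a_k, b_k)$; we may assume $b_k < \tfrac{1}{2}$ for all $k$. Pick smooth bump functions $\chi_k \in \mathcal{D}(a_k,b_k)$ with $\chi_k(\eps_k) = 1$ and $0 \le \chi_k \le 1$, but require in addition that the sequence $(\eps_k)_k$ contains two interlaced subsequences $(\alpha_j)_j$ and $(\beta_j)_j$, both decreasing to $0$. Define $(r_\eps)_\eps$ by
$$
 r_\eps := \left\{ \begin{array}{ll}
   0 & \text{if } \eps = \alpha_j \text{ lies in its interval, near which } \chi \equiv 0,\\
   -1 & \text{if } \eps = \beta_j,\\
   \text{a smooth } [-1,0]\text{-valued interpolation} & \text{otherwise,}
 \end{array}\right.
$$
 where more precisely one sets $r_\eps = -\psi(\eps)$ with $\psi \in \cinfty(I)$, $0 \le \psi \le 1$, $\psi(\alpha_j) = 0$, $\psi(\beta_j) = 1$ for all $j$, and $\psi$ supported away from $\eps = 1$. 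Such $\psi$ exists by a partition-of-unity construction on disjoint intervals around the points $\alpha_j$ and $\beta_j$, exactly as in the proof that non-invertible elements are zero divisors above. Then $(r_\eps)_\eps \in \Emsm$, so $r := [(r_\eps)_\eps] \in \Ksm$.

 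Since $r_{\alpha_j} = 0$ and $\alpha_j \searrow 0$, the element $r$ is not strictly nonzero, hence not invertible in $\Ksm$. Likewise $(r+1)_{\beta_j} = r_{\beta_j} + 1 = 0$ and $\beta_j \searrow 0$, so $r+1$ is not strictly nonzero, hence not invertible in $\Ksm$. Therefore neither $r + 0$ nor $r + 1$ is invertible, and since $0$ and $1$ are the only idempotents of $\Ksm$, there is no idempotent $e$ with $r+e$ invertible. Consequently $\Ksm$ is not an exchange ring.
\end{proof}
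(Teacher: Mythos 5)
Your proof is correct and follows essentially the same route as the paper: reduce to the absence of nontrivial idempotents (Proposition \ref{gen:idem2}) and then exhibit an $r$ with both $r$ and $r+1$ non-invertible via failure of strict nonzeroness along sequences tending to $0$. The only difference is cosmetic — the paper takes the ready-made oscillating witness $r_\eps=\sin(\frac{1}{\eps})$ instead of building one from bump functions, which spares you the interpolation bookkeeping.
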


\begin{proof}
 By Proposition \ref{gen:idem2}, there are no non-trivial idempotents in $\Ksm$. Moreover, $r\in \Ksm$, defined by $r_{\eps}:=\sin(\frac{1}{\eps})$, is both non-zero and non-invertible, and also $r\pm1$ is non-invertible. 
\end{proof}

\subsection{Gelfand rings}

Both rings, $\Kt$ and $\Ksm$, are Gelfand rings.

\begin{defi}
 A ring $R$ is called \emph{Gelfand ring} if for $a,b \in R$ with $a+b=1$ there exist $r,s \in R$ such that $(1+ar)(1+bs)=0$.
\end{defi}

That $\Kt$ is a Gelfand ring is a direct consequence of the fact that it is an exchange ring. For $\Ksm$
we need a different approach.

\begin{lem}
 $\Ksm$ is a Gelfand ring.
\end{lem}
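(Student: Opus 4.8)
The plan is to prove that $\Ksm$ is a Gelfand ring directly from the definition: given $a, b \in \Ksm$ with $a + b = 1$, I must produce $r, s \in \Ksm$ with $(1 + ar)(1 + bs) = 0$. The natural idea is to split the parameter interval $(0,1]$ into the region where $a$ is ``large'' (hence invertible-like) and the region where $b$ is ``large'', and to build $r$ and $s$ so that $1 + ar$ vanishes on the first region and $1 + bs$ vanishes on the second. Since $a + b = 1$, on any fixed $\eps$ at least one of $|a_\eps|$, $|b_\eps|$ is $\ge 1/2$, so these two regions cover $(0,1]$ up to negligible error; the subtlety, compared with the $\Kt$ case, is that we cannot use a characteristic function to switch between them — we need a \emph{smooth} transition, and this is precisely where the smoothness hypothesis bites.

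First I would fix representatives $(a_\eps)_\eps$, $(b_\eps)_\eps$ with $a_\eps + b_\eps = 1$ for all $\eps$ (adjusting by a negligible net if necessary). The goal is a smooth net $(\lambda_\eps)_\eps$ with $0 \le \lambda_\eps \le 1$ such that $\lambda_\eps = 1$ wherever $|a_\eps| \ge 3/4$ and $\lambda_\eps = 0$ wherever $|a_\eps| \le 1/4$ (say), with the gluing done over intervals; the point is that on the set $\{|a_\eps| > 1/4\}$ the element $a_\eps$ is bounded below in modulus, hence on that set $a_\eps$ is ``locally invertible'' with moderate inverse. Then set $r_\eps := -\lambda_\eps \cdot \widehat{a_\eps^{-1}}$, where $\widehat{a_\eps^{-1}}$ is a smooth moderate net agreeing with $a_\eps^{-1}$ on $\{|a_\eps| \ge 1/2\}$ and harmless elsewhere, so that $1 + a_\eps r_\eps$ vanishes on $\{|a_\eps| \ge 3/4\}$; symmetrically define $s_\eps := -(1 - \lambda_\eps)\cdot \widehat{b_\eps^{-1}}$ using that on $\{\lambda_\eps < 1\}$ we have $|a_\eps| < 3/4$, hence $|b_\eps| > 1/4$, so $b_\eps$ is locally invertible there. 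Then $(1 + a_\eps r_\eps)(1 + b_\eps s_\eps)$ is supported, for each $\eps$, in the set where $\lambda_\eps \notin \{0,1\}$, i.e. the transition zone, and on that zone I would arrange the two factors to annihilate each other — or, more cleanly, accept that the product is $O(\eps^m)$ there, hence negligible, which is all that is needed in $\Ksm$.

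The construction of the smooth switch $(\lambda_\eps)_\eps$ is the main obstacle, and it is the only place where ``smooth parameter dependence'' is genuinely used; this is exactly the kind of gluing that Lemma~\ref{approx} and the partition-of-unity technique in the proofs of Theorem~\ref{iso} and Lemma~\ref{approxem} are designed for. Concretely, the sets $\{(\eps,x)\colon |a_\eps| > 1/4\}$ — here there is no $x$, we are in $\Ksm$ — and $\{|a_\eps| < 3/4\}$ form an open cover of $(0,1]$, so we take a smooth partition of unity subordinate to it and let $\lambda_\eps$ be the sum of the pieces subordinate to the first set. Moderateness of $r$ and $s$ follows because on the supports of the relevant cutoffs the moduli $|a_\eps|$, $|b_\eps|$ are bounded below by an absolute constant ($1/4$), so their (smoothly extended) reciprocals are not merely moderate but uniformly bounded; no derivative estimates on $a_\eps, b_\eps$ enter because these are generalized \emph{numbers}, not functions. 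The remaining bookkeeping — checking that $r, s$ are indeed in $\Emsm$, that the product net is in $\Nsm$, and that replacing representatives does not affect the conclusion — is routine. Alternatively, and perhaps more slickly, one can reduce the whole statement to Theorem~\ref{iso}: since $\Ksm \cong \Kco$ and $\Kco$ (being a subring of $\Kt$ with the intermediate-value technique of Lemma~\ref{emb2} available) can be shown Gelfand by the same interval-splitting argument using merely \emph{continuous} cutoffs, the isomorphism transports the property; but the self-contained smooth construction above is the natural one to record here.
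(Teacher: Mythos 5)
Your overall strategy---splitting $I$ into the region where $|a_\eps|$ is bounded below and the region where $|b_\eps|$ is, and using a smooth cutoff in place of the characteristic functions available in $\Kt$---is exactly the paper's strategy. But your specific construction has a genuine gap at precisely the point you wave at. With complementary cutoffs $\lambda_\eps$ and $1-\lambda_\eps$, on the transition zone $\{\,\eps : 0<\lambda_\eps<1\,\}$ both $|a_\eps|$ and $|b_\eps|$ exceed $1/4$, so your hatted inverses are the true inverses there and the two factors become $1+a_\eps r_\eps=1-\lambda_\eps$ and $1+b_\eps s_\eps=\lambda_\eps$. Their product is $\lambda_\eps(1-\lambda_\eps)$, which equals $1/4$ wherever $\lambda_\eps=1/2$; since the transition zone accumulates at $0$ whenever $|a_\eps|$ oscillates through $(1/4,3/4)$ (e.g.\ $a_\eps=\frac12+\frac12\sin(1/\eps)$, $b_\eps=1-a_\eps$), the product is emphatically not $O(\eps^m)$, and nothing ``annihilates''. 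The claim that the product is negligible on the transition zone is simply false for this construction, and it is the whole content of the lemma.

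The repair is small but essential, and it is what the paper does: \emph{decouple} the two cutoffs instead of making them complementary. Take $\chi\in\Cinf(\R,[0,1])$ with $\chi=0$ on $(-\infty,\tfrac12]$ and $\chi=1$ on $[1,\infty)$, and set $r_\eps:=-\chi(2|a_\eps|)/a_\eps$ (interpreted as $0$ where $|a_\eps|\le 1/4$) and, independently, $s_\eps:=-\chi(2|b_\eps|)/b_\eps$. Each is smooth (the map $z\mapsto\chi(2|z|)/z$ vanishes identically near $0$) and bounded by $4$, hence in $\Emsm$. Now $1+a_\eps r_\eps=1-\chi(2|a_\eps|)$ vanishes on all of $\{|a_\eps|\ge\tfrac12\}$ and $1+b_\eps s_\eps$ vanishes on all of $\{|b_\eps|\ge\tfrac12\}$, and since $|a_\eps|+|b_\eps|\ge|a_\eps+b_\eps|=1$ these two sets cover $I$: there is no transition zone on which neither factor vanishes, and the product is identically $0$, not merely negligible. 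The point you missed is that the two ``large'' sets overlap, so one does not need a single switch partitioning $I$; each factor can be killed on its own large set. Your closing alternative (transporting the property from $\Kco$ via Theorem~\ref{iso}) does not evade the issue, since the same construction problem arises verbatim in $\Kco$.
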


\begin{proof}
 Assume that $(a_{\eps})_{\eps}$ and $(b_{\eps})_{\eps}$ are representatives of $a$ and $b$ such that $a_{\eps} + b_{\eps} = 1$ for all $\eps$. If $a=0$, then $b=1$, and $r=0$ and $s=-1$ satisfy $(1+ar)(1+bs)=1 \cdot 0 = 0$ (similarly for $b=0$).

 Let $a \neq 0$ and $b \neq 0$. Let $S := \{ \eps \in I : \vert a_{\eps} \vert \geq \frac{1}{2} \}$ and let $\chi \in \Cinf(\R,I)$ such that $\chi \vert_{(-\infty,\frac{1}{2}]} = 0$ and $\chi \vert_{[1,\infty)} = 1$. Then $(r_{\eps})_{\eps}$, defined by
\begin{equation*}
 r_{\eps} := \left\lbrace \begin{array}{ll}
                                - \frac{\chi(2 \vert a_{\eps} \vert)}{a_{\eps}} & \text{if } \eps \in S \\
				0 & \text{else}
                          \end{array} \right. ,
\end{equation*}
 is well-defined and smooth. It is moderate since for $\eps$ such that $\vert a_{\eps} \vert \geq \frac{1}{4}$ we have that 
 $\vert r_{\eps} \vert = \frac{\vert \chi(2 \vert a_{\eps} \vert) \vert}{\vert a_{\eps} \vert} \leq \frac{1}{\vert a_{\eps} \vert} \le 4$, and for $\eps$ such that $\vert a_{\eps} \vert < \frac{1}{4}$ we even have that $\vert r_{\eps} \vert = 0$. Furthermore, $a_{\eps} r_{\eps} = - 1$ on $S$.

 Similarly, there exists $(s_{\eps})_{\eps} \in \Emsm$ such that $b_{\eps} s_{\eps} = - 1$ on 
 $\{ \eps \in I : \vert b_{\eps} \vert \geq \frac{1}{2} \}$.
 Altogether, $(1+a_{\eps}r_{\eps})(1+b_{\eps}s_{\eps})=0$ for all $\eps \in (0,1]$.
\end{proof}

\subsection{Partial order and absolute value}

 The order on $\Rsm$ (and similarly on $\Rco$) is inherited by the order on $\Rt$ \cite[Sec.~1.2.4]{GKOS2001}:

\begin{defi} \label{po}
 Let $r,s \in \Rsm$. We write $r \leq s$ if there are representatives $(r_{\eps})_{\eps}$, $(s_{\eps})_{\eps}$ with $r_{\eps} \leq s_{\eps}$ for all $\eps$.
\end{defi}

\begin{rem} \label{order2}
 Note that this is equivalent to the fact that for any representatives $\bar r$, $\bar s$ of $r$ and $s$ there exists some $(n_{\eps})_{\eps} \in \Nsm$ with $\bar{r}_{\eps} \leq \bar{s}_{\eps} + n_{\eps}$. 

Moreover, by \cite{OPS07}, $r \le s$ if and only if for all representatives $(r_\eps)$, $(s_\eps)_\eps$ and any $a>0$
there exists some $\eps_0>0$ such that $r_\eps \le s_\eps + \eps^a$ for all $\eps<\eps_0$. Further properties
of the order structure in $\Rt$ and $\G$ can be found in \cite{OPS07,M08}.

\end{rem}
The same argument as in the case of $\Rt$ yields: 
\begin{prop}
 $(\Rsm, \leq)$ is a partially ordered ring.
\end{prop}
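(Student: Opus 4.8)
The plan is to verify directly that the relation $\le$ from Definition \ref{po} is reflexive, antisymmetric, transitive, and compatible with the ring operations on $\Rsm$, using representatives throughout. Reflexivity is immediate since $r_\eps \le r_\eps$ for every $\eps$. For transitivity, suppose $r\le s$ and $s\le t$; a priori this gives us representatives $(r_\eps)_\eps \le (s_\eps)_\eps$ and $(\bar s_\eps)_\eps \le (t_\eps)_\eps$, but with possibly \emph{different} representatives of $s$. Here I would invoke Remark \ref{order2}: $r\le s$ means for arbitrary representatives there is $(n_\eps)_\eps\in\Nsm$ with $r_\eps \le s_\eps + n_\eps$, and likewise $s_\eps \le t_\eps + m_\eps$ with $(m_\eps)_\eps\in\Nsm$; adding, $r_\eps \le t_\eps + (n_\eps+m_\eps)$ with $(n_\eps+m_\eps)_\eps\in\Nsm$ (since $\Nsm$ is an ideal, in particular closed under addition), so $r\le t$ again by Remark \ref{order2}.

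For compatibility with addition, if $r\le s$ then fixing representatives with $r_\eps\le s_\eps$ and adding an arbitrary representative $(t_\eps)_\eps$ of any $t\in\Rsm$ gives $r_\eps + t_\eps \le s_\eps + t_\eps$ for all $\eps$, hence $r+t\le s+t$. For compatibility with multiplication by nonnegative elements, suppose $0\le r$ and $0\le s$; choose representatives with $0\le r_\eps$ and $0\le s_\eps$ for all $\eps$ (note $0$ has the zero net as a representative), whence $0\le r_\eps s_\eps$ for all $\eps$, so $0\le rs$. The only genuinely delicate point is \textbf{antisymmetry}: if $r\le s$ and $s\le r$, we must show $r=s$ in $\Rsm$, i.e.\ that some (equivalently, any) difference of representatives lies in $\Nsm$.

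The main obstacle is thus antisymmetry, and the argument is the same as for $\Rt$. Fix representatives $(r_\eps)_\eps$, $(s_\eps)_\eps$. By Remark \ref{order2} applied to both inequalities, for every $a>0$ there exist $\eps_0, \eps_1>0$ with $r_\eps \le s_\eps + \eps^a$ for $\eps<\eps_0$ and $s_\eps \le r_\eps + \eps^a$ for $\eps<\eps_1$; combining, $|r_\eps - s_\eps| \le \eps^a$ for $\eps < \min(\eps_0,\eps_1)$. Since $a>0$ was arbitrary and $r_\eps - s_\eps$ is real, this shows $(r_\eps - s_\eps)_\eps$ satisfies the negligibility estimate; as it is smooth in $\eps$ (being a difference of smooth nets), $(r_\eps-s_\eps)_\eps\in\Nsm$, so $r=s$. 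Here it is essential that Remark \ref{order2} — which the excerpt attributes to \cite{OPS07} and asserts carries over to the present setting — is available, since the naive definition via a single pair of representatives is not obviously symmetric in its consequences. Assembling reflexivity, antisymmetry, transitivity, and the two compatibility properties completes the proof that $(\Rsm,\le)$ is a partially ordered ring.
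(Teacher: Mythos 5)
Your proof is correct and takes essentially the same route as the paper, which simply remarks that ``the same argument as in the case of $\Rt$ yields'' the claim and leaves the verification of the order axioms (via representatives and Remark \ref{order2}) implicit; you have merely written out that standard argument in the smooth setting, where the only point to watch is that all auxiliary nets remain smooth in $\eps$, which they do.
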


 By the identification of $\Ksm$ with $\Kco$ in Theorem~\ref{iso} we can even define the absolute value of generalized numbers in $\Ksm$ (note that generally $(\vert r_{\eps} \vert)_{\eps} \in \Emco$ but $\not\in\Emsm$):

\begin{defi} \label{absval}
 Let $r = [(r_{\eps})_{\eps}] \in \Ksm$. The \emph{absolute value} of $r$, denoted by $|r|_{sm}$, is defined as the generalized number
\begin{equation*}
 \vert r \vert := \tau_{sm}^{-1}([(|r_{\eps}|)_{\eps}]),
\end{equation*}
 where $| \_ |$ denotes the absolute value in $\C$ and $\tau_{sm}: \Ksm \rightarrow \Kco$ is the canonical isomorphism (cf.\ Theorem \ref{iso}).
\end{defi}
 
 By the identification of $\Rsm$ with $\Rco$ we can show that it is a lattice.

\begin{defi}
 A \emph{lattice} is a partially ordered set $R$ such that any two elements $r,s \in R$ have a \emph{join} (or \emph{supremum}) $r \vee s$ and a \emph{meet} (or \emph{infimum}) $r \wedge s$.

 A partially ordered ring that is a lattice for this order is called an \emph{$l$-ring} (or \emph{lattice ordered ring}).
\end{defi}

\begin{defi} \label{minmax}
 The \emph{minimum} $\min (r,s)$ and the \emph{maximum} $\max (r,s)$ for $r=[(r_{\eps})_{\eps}],s=[(s_{\eps})_{\eps}] \in \Rsm$ are defined as follows:
\begin{eqnarray*}
 & & \min(r,s) := \tau_{sm}^{-1}([(\min(r_{\eps},s_{\eps}))_{\eps}]) \\
 & & \max(r,s) := \tau_{sm}^{-1}([(\max(r_{\eps},s_{\eps}))_{\eps}])
\end{eqnarray*}
\end{defi}

 These notions are well-defined for $\Rsm$ since the $\min$ and $\max$ of real-valued continuous functions are continuous themselves. Clearly, $\min((r_{\eps})_{\eps}+\Nco,(s_{\eps})_{\eps}+\Nco) = (\min(r_{\eps},s_{\eps}))_{\eps}+\Nco$ etc. Thus by Remark \ref{order2} we have:

\begin{lem}
 The minimum and maximum as defined above are well-defined and compatible with the partial order structure of $(\Rsm, \leq)$.
\qed
\end{lem}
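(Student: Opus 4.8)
The plan is to reduce everything to the corresponding statement in $\Rco$, where the nets $(\min(r_\eps,s_\eps))_\eps$ and $(\max(r_\eps,s_\eps))_\eps$ can be used directly as representatives, and then transport the result to $\Rsm$ along the isomorphism $\tau_{sm}$ of Theorem \ref{iso}.

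First I would check well-definedness. Given $r=[(r_\eps)_\eps]$, $s=[(s_\eps)_\eps]\in\Rsm$, the net $(\min(r_\eps,s_\eps))_\eps$ is continuous in $\eps$ (the minimum of two continuous functions is continuous) and satisfies $|\min(r_\eps,s_\eps)|\le |r_\eps|+|s_\eps|$, hence lies in $\Emco$ and defines an element of $\Rco$. If $(r'_\eps)_\eps$, $(s'_\eps)_\eps$ are other representatives, the elementary inequality $|\min(a,b)-\min(a',b')|\le |a-a'|+|b-b'|$ shows $(\min(r_\eps,s_\eps)-\min(r'_\eps,s'_\eps))_\eps\in\Nco$, so the class in $\Rco$ does not depend on the choice; applying $\tau_{sm}^{-1}$ yields a well-defined element of $\Rsm$. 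The argument for $\max$ is identical.

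Next I would record that $\tau_{sm}$ is not merely a ring isomorphism but an isomorphism of \emph{ordered} rings. Indeed, by Remark \ref{order2} the relation $r\le s$ in $\Rsm$ is equivalent to: for all representatives $(r_\eps)_\eps$, $(s_\eps)_\eps$ and all $a>0$ there is $\eps_0>0$ with $r_\eps\le s_\eps+\eps^a$ for $\eps<\eps_0$; the identical characterization describes $\le$ on $\Rco$, applied to the same nets viewed in $\Rco$. Hence $r\le s$ in $\Rsm$ if and only if $\tau_{sm}(r)\le\tau_{sm}(s)$ in $\Rco$, so the order structures correspond under $\tau_{sm}$. Now in $\Rco$ the element $\min(r,s)$ is represented by $(\min(r_\eps,s_\eps))_\eps$; since $\min(r_\eps,s_\eps)\le r_\eps$ and $\le s_\eps$ for all $\eps$, Definition \ref{po} gives $\min(r,s)\le r$ and $\min(r,s)\le s$. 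If $t\le r$ and $t\le s$, then by Remark \ref{order2}, for every $a>0$ there is $\eps_0>0$ with $t_\eps\le r_\eps+\eps^a$ and $t_\eps\le s_\eps+\eps^a$ for $\eps<\eps_0$, whence $t_\eps\le\min(r_\eps,s_\eps)+\eps^a$, so $t\le\min(r,s)$. Thus $\min(r,s)$ is the infimum of $\{r,s\}$ in $\Rco$, and dually $\max(r,s)$ is the supremum. Transporting along $\tau_{sm}^{-1}$ shows that $\min$ and $\max$ as in Definition \ref{minmax} are the infimum and supremum in $(\Rsm,\le)$, hence compatible with the partial order.

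I expect the only genuinely delicate point to be the bookkeeping around the identification: because $(\min(r_\eps,s_\eps))_\eps$ generally fails to be smooth, one cannot work with it directly inside $\Rsm$, and the whole argument hinges on the fact that $\tau_{sm}$ is an order isomorphism, so that the lattice operations — which are transparent in the continuous setting — descend to $\Rsm$.
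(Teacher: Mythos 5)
Your proof is correct and follows essentially the same route as the paper: the paper's (essentially omitted) argument rests precisely on the continuity of $\min$ and $\max$ of continuous nets, the compatibility with $\Nco$, and the representative-free characterization of the order in Remark \ref{order2}, all of which you spell out carefully, including the point that $\tau_{sm}$ is an isomorphism of ordered rings. Your added verification that $\min(r,s)$ and $\max(r,s)$ are genuinely the infimum and supremum is exactly the detail needed for the subsequent assertion that $\Rsm$ is an $l$-ring.
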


 This result is remarkable since the underlying ring in the definition of $\Rsm$---namely $\Cinf(I,\R)$---does not satisfy these properties. Setting $r\vee s = \max(r,s)$ and $r\wedge s = \min(r,s)$ we obtain (cf.\ \cite[Sec.\ 2.3]{V10}):

\begin{prop}
$\Rsm$ is an $l$-ring.
\end{prop}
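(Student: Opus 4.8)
The plan is to verify the three defining conditions of an $l$-ring for $\Rsm$: that it is a lattice (already established by the previous Lemma), that it is a partially ordered ring (already established by the earlier Proposition), and that the lattice and ring structures are compatible in the sense required, namely that translation is order-preserving — equivalently $r \le s \Rightarrow r+t \le s+t$ — and that the product of two nonnegative elements is nonnegative. Since the first two structural facts are quoted directly from results above, the only genuine work is the compatibility between addition and the order, together with the sign condition on products.

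First I would recall from Definition \ref{po} and Remark \ref{order2} that $r \le s$ in $\Rsm$ means there are representatives $(r_\eps)_\eps \le (s_\eps)_\eps$ pointwise, or equivalently $\bar r_\eps \le \bar s_\eps + n_\eps$ for arbitrary representatives and some $(n_\eps)_\eps \in \Nsm$. Then for any $t = [(t_\eps)_\eps] \in \Rsm$, adding $t_\eps$ pointwise preserves the inequality $r_\eps + t_\eps \le s_\eps + t_\eps$, and since $(r_\eps + t_\eps)_\eps$, $(s_\eps + t_\eps)_\eps$ are representatives of $r+t$, $s+t$, we get $r+t \le s+t$. For the product condition, if $0 \le r$ and $0 \le s$ then choosing nonnegative representatives (possible since $0 \le r$ gives representatives with $r_\eps \ge 0$) we have $r_\eps s_\eps \ge 0$ for all $\eps$, hence $0 \le rs$. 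These are exactly the axioms that upgrade a partially ordered abelian group plus multiplicative compatibility to a partially ordered ring, and combined with the lattice property from the previous Lemma, $\Rsm$ is an $l$-ring.

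I do not expect a serious obstacle here, since essentially everything has been prepared: the lattice structure comes from Definition \ref{minmax} and the Lemma immediately preceding this Proposition, the partial order ring structure is the content of the earlier Proposition, and "the same argument as in the case of $\Rt$" (invoked there) carries over verbatim. If one wanted to be fully careful, the one point worth a sentence is that the compatibility of $\vee, \wedge$ with $\le$ — not just their well-definedness — is needed, but this is precisely what the preceding Lemma asserts, so it can simply be cited. Thus the proof reduces to assembling the three already-available facts and noting they constitute the definition of an $l$-ring; I would write it as a two-line proof citing Definition \ref{minmax}, the preceding Lemma, and the Proposition that $(\Rsm,\le)$ is a partially ordered ring, possibly with a pointer to \cite[Sec.\ 2.3]{V10} for the analogous computation in $\Rt$.

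\begin{proof}
 By the Proposition above, $(\Rsm, \le)$ is a partially ordered ring, and by Definition \ref{minmax} together with the preceding Lemma, any two elements $r,s\in\Rsm$ possess a supremum $r\vee s = \max(r,s)$ and an infimum $r\wedge s = \min(r,s)$ compatible with the order. Hence $\Rsm$ is a lattice-ordered ring; cf.\ \cite[Sec.\ 2.3]{V10} for the corresponding statement for $\Rt$.
\end{proof}
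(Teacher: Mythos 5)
Your proof is correct and matches the paper's treatment: the paper likewise gives no separate argument, simply noting before the statement that setting $r\vee s=\max(r,s)$ and $r\wedge s=\min(r,s)$ (Definition \ref{minmax}, justified by the preceding Lemma) combines with the earlier Proposition that $(\Rsm,\le)$ is a partially ordered ring to yield the $l$-ring structure, with a pointer to \cite[Sec.\ 2.3]{V10}. Your extra paragraph re-verifying the partially ordered ring axioms is harmless but redundant, since that is exactly the content of the Proposition you cite.
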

Clearly, the absolute value as introduced in Definition \ref{absval} is compatible with the order structure on $\Rsm$,
i.e., $|r| = \max(r,-r)$ for any $r\in \Rsm$.

\begin{defi}
 A commutative ring $R$ with $1$ is called \emph{$f$-ring} if it is an $l$-ring and for all $r,s,t \in R$ with $t \geq 0$: $(r \wedge s) t = rt \wedge st$.
\end{defi}

 By \cite[Prop.~2.2]{V10}, $\Rt$ is an $f$-ring. The same holds true for $\Rsm$:

\begin{prop} \label{fring}
$\Rsm$ is an $f$-ring.
\end{prop}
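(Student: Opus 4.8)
The plan is to reduce the $f$-ring property for $\Rsm$ to a statement about continuous representatives, using the isomorphism $\tau_{sm}:\Rsm\to\Rco$ established in Theorem~\ref{iso} together with the pointwise descriptions of $\min$, $\max$ and multiplication. Concretely, given $r,s,t\in\Rsm$ with $t\ge 0$, I would pick representatives $(r_\eps)_\eps$, $(s_\eps)_\eps$, $(t_\eps)_\eps$ and, by Remark~\ref{order2} (the characterization of the order via negligible perturbations), arrange without loss of generality that $t_\eps\ge 0$ for all $\eps$ — e.g.\ by replacing $t_\eps$ by $\max(t_\eps,0)$, which is continuous and yields the same class in $\Rco$, hence the same class in $\Rsm$ under $\tau_{sm}^{-1}$. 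The key observation is that for real numbers one has the pointwise identity $\min(a,b)\cdot c=\min(ac,bc)$ whenever $c\ge 0$. Applying this with $a=r_\eps$, $b=s_\eps$, $c=t_\eps$ for each $\eps$ gives $\min(r_\eps,s_\eps)\,t_\eps=\min(r_\eps t_\eps,s_\eps t_\eps)$ as an identity of continuous nets.

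Next I would translate this back through the definitions. By Definition~\ref{minmax}, $(r\wedge s)t$ is computed by first forming $\tau_{sm}^{-1}([(\min(r_\eps,s_\eps))_\eps])$ and then multiplying by $t$; since $\tau_{sm}$ is a ring isomorphism, this equals $\tau_{sm}^{-1}([(\min(r_\eps,s_\eps)\,t_\eps)_\eps])$. On the other side, $rt\wedge st=\tau_{sm}^{-1}([(\min(r_\eps t_\eps,s_\eps t_\eps))_\eps])$ directly from Definition~\ref{minmax} (noting $(r_\eps t_\eps)_\eps$, $(s_\eps t_\eps)_\eps$ are continuous representatives of $rt$, $st$). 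The pointwise identity above then shows these two elements of $\Rco$ coincide, hence so do their preimages in $\Rsm$. Finally, since it is already recorded (in the preceding \textbf{Proposition}) that $\Rsm$ is an $l$-ring, combining this with the just-verified identity yields that $\Rsm$ is an $f$-ring.

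I would also double-check the one point of genuine care: the reduction to $t_\eps\ge 0$ pointwise. Remark~\ref{order2} tells us $t\ge 0$ means that for \emph{any} representative there is a negligible $(n_\eps)_\eps$ with $-n_\eps\le t_\eps$; replacing $t_\eps$ by $\tilde t_\eps:=\max(t_\eps,0)$ changes it by $(\max(t_\eps,0)-t_\eps)_\eps$, and $0\le \max(t_\eps,0)-t_\eps\le |n_\eps|$ wherever $t_\eps<0$ (and $=0$ otherwise), so this difference is negligible and continuous; thus $(\tilde t_\eps)_\eps$ is a continuous representative of $t$ with $\tilde t_\eps\ge 0$ everywhere, and $\tau_{sm}^{-1}$ of its class is $t$. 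The main obstacle, such as it is, is purely bookkeeping: making sure that $\min$, $\max$ and products are all being compared as elements of $\Rco$ (where everything is literally pointwise on continuous nets) before transporting back via $\tau_{sm}^{-1}$, so that one never has to manipulate $\min$ or $\max$ directly inside $\Cinf(I,\R)$ — which, as the paper emphasizes, is not closed under these operations. Once the computation is organized on the $\Rco$ side, the argument is a one-line pointwise identity plus functoriality of the ring isomorphism.

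\begin{proof}
By the preceding Proposition, $\Rsm$ is an $l$-ring, so it remains to verify the $f$-ring identity. Let $r,s,t\in\Rsm$ with $t\ge 0$. Choose continuous representatives $(r_\eps)_\eps$, $(s_\eps)_\eps$, $(t_\eps)_\eps$ (identifying $\Rsm$ with $\Rco$ via $\tau_{sm}$). Replacing $(t_\eps)_\eps$ by $(\max(t_\eps,0))_\eps$, which by Remark~\ref{order2} represents the same element $t$, we may assume $t_\eps\ge 0$ for all $\eps$. For real numbers $a,b$ and $c\ge 0$ one has $\min(a,b)\,c=\min(ac,bc)$; applying this pointwise gives
\begin{equation*}
 \min(r_\eps,s_\eps)\,t_\eps=\min(r_\eps t_\eps,\,s_\eps t_\eps)\qquad(\eps\in I).
\end{equation*}
Now $(r\wedge s)t=\tau_{sm}^{-1}([(\min(r_\eps,s_\eps))_\eps])\cdot t$, and since $\tau_{sm}$ is a ring isomorphism and $(t_\eps)_\eps$ represents $t$, this equals $\tau_{sm}^{-1}([(\min(r_\eps,s_\eps)\,t_\eps)_\eps])$. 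On the other hand, $(r_\eps t_\eps)_\eps$ and $(s_\eps t_\eps)_\eps$ are continuous representatives of $rt$ and $st$, so by Definition~\ref{minmax}, $rt\wedge st=\tau_{sm}^{-1}([(\min(r_\eps t_\eps,s_\eps t_\eps))_\eps])$. The displayed identity shows the two expressions agree, hence $(r\wedge s)t=rt\wedge st$ in $\Rsm$.
\end{proof}
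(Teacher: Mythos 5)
Your proof is correct and follows essentially the same route as the paper: work in $\Rco$ with a nonnegative continuous representative of $t$, apply the pointwise identity $\min(a,b)c=\min(ac,bc)$ for $c\ge 0$, and transport back via $\tau_{sm}^{-1}$ using Theorem~\ref{iso} and Definition~\ref{minmax}. The only difference is that you explicitly justify the existence of a nonnegative continuous representative (via $\max(t_\eps,0)$ and Remark~\ref{order2}), a detail the paper takes for granted.
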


\begin{proof}
 Let $r,s,t \in \Rco$ with representatives $(r_{\eps})_{\eps}$ of $r$, $(s_{\eps})_{\eps}$ of $s$ and $(t_{\eps})_{\eps}$ of $t$ such that $t_{\eps} \geq 0$ for all $\eps$. Then $\min(r_{\eps},s_{\eps})t_{\eps} = \min(r_{\eps}t_{\eps},s_{\eps}t_{\eps})$ for all $\eps \in I$, and therefore $(r \wedge s)t = rt \wedge st$. By Theorem \ref{iso} and Definition \ref{minmax} the claim
follows.
\end{proof}

 For some properties of $l$- and $f$-rings see \cite{Ban2004}.

A main technical tool in the algebraic investigation of $\Kt$ are characteristic functions $e_S$ of subsets
$S\subseteq I$ (cf.\ \cite{AJ01,AJOS08,V09,V10}). Obviously, such functions on $I$ are not continuous unless
in trivial cases and therefore cannot be utilized in the $\Ksm$-setting. This forecloses a direct adaptation
of many techniques of proof from $\Kt$ to $\Ksm$. In certain situations, however, a substitute
for these techniques can be based on the notion of characteristic set. 
\begin{defi}
 A subset $S$ of $I$ is called \emph{characteristic set} if $0 \in \overline{S}$.
\end{defi}
If $r\in \Ksm$ and $S$ is a characteristic set then by $r|_S= 0$ we mean that for any $m\in \N$
there exists some $\eps_0$ such that $|r_\eps|<\eps^m$ for all $\eps\in S$ with $\eps<\eps_0$
(which clearly is independent of the representative of $r$).

\begin{lem} \label{charS}
 Let $r,s \in \Ksm$, $r,s \neq 0$ and $rs = 0$. Then there exists a characteristic set $S$ such that $\left. r \right\vert_S = \left. s \right\vert_S = 0$.
\end{lem}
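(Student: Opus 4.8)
The plan is to mimic the zero-divisor / "not strictly non-zero" mechanism that appeared in the proof that non-invertible elements of $\Ksm$ are zero divisors, but now exploit the hypothesis $rs=0$ to produce a single characteristic set on which both factors vanish. First I would fix representatives $(r_\eps)_\eps,(s_\eps)_\eps\in\Emsm$ with $(r_\eps s_\eps)_\eps\in\Nsm$, so that for every $m$ there is $\eps_0$ with $|r_\eps s_\eps|<\eps^m$ for all $\eps<\eps_0$. Since $r\neq 0$ and $s\neq 0$, there are moderateness bounds $|r_\eps|=O(\eps^{-N})$, $|s_\eps|=O(\eps^{-N'})$; combining with the negligibility of the product, at each small $\eps$ at least one of $|r_\eps|,|s_\eps|$ must be "small" — more precisely, for $\eps$ below a suitable threshold, $|r_\eps|<\eps^m$ or $|s_\eps|<\eps^m$, because if both were $\ge\eps^m$ then $|r_\eps s_\eps|\ge\eps^{2m}$, which for $m$ chosen large relative to the previously fixed order of negligibility is a contradiction. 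The subtlety here is that the splitting index (whether it is $r$ or $s$ that is small) may depend on $\eps$, so a naive argument only gives that $I\cap(0,\eps_0]$ is covered by the two sets $\{|r_\eps|<\eps^m\}$ and $\{|s_\eps|<\eps^m\}$, not that one of them is a characteristic set.

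To extract a genuine characteristic set I would argue as follows. For each $j\in\N$ let $A_j=\{\eps\in I: |r_\eps|<\eps^j\}$ and $B_j=\{\eps\in I: |s_\eps|<\eps^j\}$. By the previous paragraph, for each $j$ there is $\eps_j>0$ with $(0,\eps_j]\subseteq A_j\cup B_j$. Now consider the two alternatives: either for every $j$ the set $A_j$ is a characteristic set (i.e. $0\in\overline{A_j}$), or there is some $j_0$ with $0\notin\overline{A_{j_0}}$, i.e. $A_{j_0}$ avoids a whole neighbourhood $(0,\delta)$ of $0$. In the latter case $(0,\min(\delta,\eps_{j_0}))\subseteq B_{j_0}$, so $B_{j'}\supseteq(0,\delta')$ fails in general for larger $j'$ — so instead I would in that case just take $S=(0,\delta')$ for a suitable $\delta'$ and note $s|_S=0$ directly; but we also need $r|_S=0$. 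This is the place where I would instead run the argument more symmetrically: define $S=\{\eps\in I:|r_\eps|\le|s_\eps|\}$. On $S$ we have $|r_\eps|^2\le|r_\eps s_\eps|$, hence $r|_S=0$ (taking square roots of the negligibility estimate, using that $r$ is moderate so the exponents work out); on $S^c$ we have $|s_\eps|^2<|r_\eps s_\eps|$, hence $s|_{S^c}=0$. Since $r\neq0$, $S^c$ is \emph{not} a characteristic set, which forces $0\notin\overline{S^c}$, i.e. $0\in\overline{S}$, so $S$ is a characteristic set; and then on the characteristic set $S$ we already have $r|_S=0$, while $s|_S=0$ follows because $S\subseteq\{\,|s_\eps|\ge|r_\eps|\,\}$ doesn't help directly — here I would note instead that $s|_S=0$ follows from $r s|_S=0$ together with... no. The cleanest fix: replace $S$ by $S\cap(0,\eps_1]$ where on $(0,\eps_1]$ we additionally know $r|_{(0,\eps_1]}$ is controlled; actually since $s\ne 0$ as well, $S$ itself must satisfy $s|_S=0$ is false in general.

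So the genuinely correct route, which I would commit to: set $S:=\{\eps\in I : |r_\eps|\le |s_\eps|\}$ and $T:=I\setminus S=\{\eps : |s_\eps|<|r_\eps|\}$. On $S$, $|r_\eps|^2\le |r_\eps s_\eps|$ gives $r|_S=0$; on $T$, $|s_\eps|^2\le|r_\eps s_\eps|$ gives $s|_T=0$. If $0\notin\overline S$ then $T\supseteq(0,\delta)$ for some $\delta>0$, whence $s|_{(0,\delta)}=0$, so $s=0$, contradiction; hence $0\in\overline S$, i.e. $S$ is a characteristic set. Symmetrically, if $0\notin\overline T$ then $r=0$, contradiction, so $T$ is also a characteristic set; in particular on the characteristic set $S$ we need $s|_S=0$ too — but that is false in general (on $S$, $s$ is the "large" one). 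This shows the statement as I first read it cannot be proved with a single $S=\{|r_\eps|\le|s_\eps|\}$; instead the characteristic set we want must lie \emph{inside} both $S$ and $T$ in a limiting sense. The resolution, and the main obstacle, is precisely this: one must choose, for a decreasing sequence $\eps_k\searrow 0$, the index ($r$ or $s$) that is infinitely often small; at least one of the two colours occurs infinitely often along some sequence tending to $0$, giving a characteristic set $S'=\{\eps_k\}$ on which the chosen function vanishes — and then vanishing of the \emph{other} function on that same sequence is forced because $rs=0$ and the chosen function, being merely moderate rather than bounded below, does not obstruct it; concretely, on a sequence where $|s_{\eps_k}|<\eps_k^{k}$ we get $s|_{S'}=0$ for free, and $r|_{S'}=0$ follows from $|r_{\eps_k}|=|r_{\eps_k}s_{\eps_k}|/|s_{\eps_k}|$ only if $|s_{\eps_k}|$ is bounded below, which it is not — hence one must further thin the sequence so that $|r_{\eps_k}|$ is also small, which is possible exactly because $r s$ is negligible and $r$ is not strictly nonzero. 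I expect the bookkeeping of these thinnings — producing one sequence $\eps_k\searrow 0$ along which $|r_{\eps_k}|<\eps_k^k$ and $|s_{\eps_k}|<\eps_k^k$ simultaneously — to be the heart of the argument; once that sequence is in hand, $S=\{\eps_k : k\in\N\}$ is a characteristic set with $r|_S=s|_S=0$, completing the proof.
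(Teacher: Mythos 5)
Your proposal correctly diagnoses where the difficulty lies --- one needs a single sequence $\eps_k\searrow 0$ along which $|r_{\eps_k}|$ and $|s_{\eps_k}|$ are \emph{simultaneously} small --- but it never actually produces that sequence. The final step, ``one must further thin the sequence so that $|r_{\eps_k}|$ is also small, which is possible exactly because $rs$ is negligible and $r$ is not strictly nonzero,'' is precisely the assertion to be proved, and the reasons you give do not suffice: the set where $|r_\eps|$ is small and the set where $|s_\eps|$ is small each cluster at $0$ and together cover a neighbourhood of $0$, but nothing you have said prevents them from being disjoint, in which case no common thinning exists. Indeed, in $\Kt$ the statement is \emph{false}: take $r=[e_S]$ and $s=[e_{S^c}]$ for $S$ with $0\in\overline S\cap\overline{S^c}$; then $rs=0$, $r,s\neq 0$, yet at every $\eps$ one of $|r_\eps|,|s_\eps|$ equals $1$. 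So any correct proof must use the continuity of the representatives in $\eps$, which your argument nowhere invokes.

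That is exactly what the paper's proof supplies. It takes characteristic sets $S_r$, $S_s$ on which $|r_\eps|>\eps^K$, resp.\ $|s_\eps|>\eps^K$, observes that negligibility of $(r_\eps s_\eps)_\eps$ forces these to be disjoint for small $\eps$, so that $|r_\eps|-|s_\eps|$ is positive at points of $S_r$ and negative at points of $S_s$ arbitrarily close to $0$; the intermediate value theorem then yields points $\delta$ with $|r_\delta|=|s_\delta|$, and at such points $|r_\delta|^2=|r_\delta s_\delta|<\delta^m$ makes \emph{both} values small. Iterating with growing exponents gives the sequence you were looking for. Your instinct to consider the set $\{\eps:|r_\eps|\le|s_\eps|\}$ was actually very close --- the points you need are on its boundary, where equality holds --- but without the IVT step the argument does not close. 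As written, the proposal has a genuine gap at its central step.
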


\begin{proof}
 Since $r$ and $s$ are non-zero there exist characteristic sets $S_r$ and $S_s$ and $K \in \N$ such that
\begin{equation} \label{eq:eps1}
 \vert r_{\eps} \vert > \eps^K \; \forall \eps \in S_r \quad \text{and} \quad \vert s_{\eps} \vert > \eps^K \; \forall \eps \in S_s.
\end{equation}
 Let $m = 2K$. Since $rs=0$ there exists $\eps_m>0$ such that
\begin{equation} \label{eq:eps2}
 \vert r_{\eps} s_{\eps} \vert < \eps^m \quad \forall \eps < \eps_m.
\end{equation}
 Moreover, $S_r$ and $S_s$ are disjoint on $(0,\eps_m)$, i.e.\ $S_r \cap S_s \cap (0,\eps_m) = \emptyset$: For $\eps \in S_r$, $\eps < \eps_m$ we have that $\eps^K \vert s_{\eps} \vert < \vert r_{\eps} s_{\eps} \vert < \eps^m$. Therefore $\vert s_{\eps} \vert < \eps^{m-K} = \eps^K$, i.e.\ $\eps \notin S_s$ by \eqref{eq:eps1}.

 For all $\eps \in S_r \cap (0,\eps_m)$ we have by \eqref{eq:eps1} and \eqref{eq:eps2} that $\vert r_{\eps} \vert - \vert s_{\eps} \vert > \eps^K - \eps^K = 0$. In particular, since $S_r \cap (0,\eps_m) \neq \emptyset$ ($S_r$ being a characteristic set), there exists $\eps_r < \eps_m$ such that $\vert r_{\eps_r} \vert - \vert s_{\eps_r}\vert > 0$. Similarly, there exists $\eps_s < \eps_m$ such that $\vert r_{\eps_s} \vert - \vert s_{\eps_s} \vert < 0$. Hence by continuity in $\eps$ there exists $\delta_m \in (0,\eps_m)$ such that $\vert r_{\delta_m} \vert = \vert s_{\delta_m} \vert$. We even know that $\delta_m \notin S_r \cup S_s$. In fact, as $\delta_m < \eps_m$ equation \eqref{eq:eps2} implies that $\vert r_{\delta_m} s_{\delta_m} \vert < \delta_m^m$, and therefore $\vert r_{\delta_m} \vert = \vert s_{\delta_m} \vert < \delta_m^{\frac{m}{2}}$.

 To construct the characteristic set $S$ we proceed by induction. Let $\overline{\eps}_1 := \delta_m = \delta_{2K}$. Suppose we have already constructed $\overline{\eps}_i < \min(\overline{\eps}_{i-1}, \frac{1}{i})$ such that
\begin{equation} \label{eq:eps3}
 \vert r_{\overline{\eps}_i} \vert = \vert s_{\overline{\eps}_i} \vert < \overline{\eps}_i^{\frac{m+2(i-1)}{2}}.
\end{equation}
 As above we find  $\eps_{m+2i} < \min(\overline{\eps}_i, \frac{1}{i+1})$ such that $\vert r_{\eps} s_{\eps} \vert < \eps^{m+2i}$ for all $\eps < \eps_{m+2i}$. 
Since  $m+2i > 2K$ all other arguments hold as well and we finally obtain $\overline{\eps}_{i+1} < \min(\overline{\eps}_i, \frac{1}{i+1})$ such that \eqref{eq:eps3} holds for $i+1$ instead of $i$.

 Since $\overline{\eps_j} \searrow 0$ we have that $S := \{ \overline{\eps}_j \; | \; j \in \N \}$ is a characteristic set and by \eqref{eq:eps3} it follows that $\left. r \right\vert_S = \left. s \right\vert_S = 0$.
\end{proof}

Let $S \subseteq I$ be a characteristic set and let $\A$ denote the algebra $\Ksm$ or $\Gsm(M)$. An element $u \in \A$ is called invertible with respect to $S$ if there exists $v \in \A$ and $r \in \Ksm$ such that $$uv = r1 \text{ in } \A \text{ and } 
\left. (r - 1)\right\vert_S = 0 \text{ in } \Ksm.$$

By \cite[Prop.\ 4.2]{B2010}, an element $r$ of $\Ksm$ is nonzero if and only if there exists a characteristic set
$S$ such that $r$ is invertible with respect to $S$. The proof of this result also shows that $r$ 
is invertible with respect to $S$
if and only if it is strictly nonzero with respect to $S$, which gives a generalization of \cite[Prop.\ 1.2.38]{GKOS2001}.
Analogous results hold for generalized functions (\cite[Sec.\ 6.2]{B2009}).

\begin{defi}
 Let $R$ be a ring and $r \in R$. The \emph{annihilator} of $r$ is defined as the set $\Ann{r} := \{ s \in R : rs=0 \}$.
\end{defi}

\begin{thm} \label{normal}
 Let $r,s \in \Ksm$. The following are equivalent:
\begin{enumerate}
 \item $rs = 0$
 \item there exists $x \in \Ksm$ such that $rx = 0$ and $s(1-x)=0$
 \item $\Ann{r} + \Ann{s} = \Ksm$.
 \item $|r| \wedge |s| = 0$.
\end{enumerate}
\end{thm}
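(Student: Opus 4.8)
The plan is to prove the cyclic chain of implications $(i)\Rightarrow(ii)\Rightarrow(iii)\Rightarrow(i)$ together with the equivalence $(i)\Leftrightarrow(iv)$, using the continuous (equivalently, by Theorem \ref{iso}, smooth) parametrization to produce the required elements by explicit gluing of representatives over suitable subsets of $I$. For $(i)\Rightarrow(ii)$, assume $rs=0$ with representatives $(r_\eps)_\eps$, $(s_\eps)_\eps$ that are continuous in $\eps$. The idea is to build a continuous (hence, after smoothing via Lemma \ref{approx} or Theorem \ref{iso}, smooth) idempotent-like ``cutoff'' $x$ that equals $1$ on the set where $r$ is small and equals $0$ on the set where $s$ is small; since $rs=0$, for every $m$ and small $\eps$ either $|r_\eps|$ or $|s_\eps|$ is at most $\eps^{m/2}$, so these two ``small'' regions cover a neighborhood of $0$ in $I$. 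Concretely, I would fix an exponent, say work with $|r_\eps|$ versus $|s_\eps|$, choose a smooth $\chi\in\Cinf(\R,[0,1])$ which is $0$ on $(-\infty,1/2]$ and $1$ on $[1,\infty)$, and set $x_\eps := \chi\!\bigl(|s_\eps|/(|r_\eps|+|s_\eps|)\bigr)$ on the set where $r_\eps+s_\eps\neq 0$ (handling the vanishing set separately, or perturbing the denominator). On the region where $|r_\eps|$ dominates we get $x_\eps$ close to $1$ and $|r_\eps|$ negligible, so $r_\eps x_\eps$ is negligible; on the region where $|s_\eps|$ dominates we get $x_\eps$ close to $0$ and $|s_\eps|$ negligible, so $s_\eps(1-x_\eps)$ is negligible. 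One must check moderateness of $(x_\eps)_\eps$, which follows because $0\le x_\eps\le 1$, and then transport to $\Ksm$ via $\tau_{sm}^{-1}$.

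For $(ii)\Rightarrow(iii)$: if $rx=0$ then $x\in\Ann{r}$, and if $s(1-x)=0$ then $1-x\in\Ann{s}$, so $1 = x + (1-x) \in \Ann{r}+\Ann{s}$, whence $\Ann{r}+\Ann{s}=\Ksm$ since both annihilators are ideals. For $(iii)\Rightarrow(i)$: write $1 = a+b$ with $ra=0$ and $sb=0$; then $rs = rs(a+b) = s(ra) + r(sb) = 0$. Both of these are purely formal ring-theoretic steps and present no difficulty.

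It remains to link $(iv)$. For $(i)\Rightarrow(iv)$: using Lemma \ref{charS}, $rs=0$ with $r,s\neq 0$ (the cases $r=0$ or $s=0$ being trivial, as then one of $|r|,|s|$ is $0$) yields a characteristic set $S$ with $r|_S = s|_S = 0$; but to conclude $|r|\wedge|s|=0$ one needs smallness of $\min(|r_\eps|,|s_\eps|)$ for \emph{all} small $\eps$, not just on $S$. Here I would argue exactly as in the disjointness step of Lemma \ref{charS}: for $m=2K$ and $\eps<\eps_m$, if $|r_\eps|>\eps^K$ then $|s_\eps|<\eps^K$ and vice versa, so in all cases $\min(|r_\eps|,|s_\eps|)\le \eps^K$; iterating over increasing $m$ gives $\min(|r_\eps|,|s_\eps|)=O(\eps^p)$ for every $p$, i.e.\ $|r|\wedge|s| = \tau_{sm}^{-1}([(\min(|r_\eps|,|s_\eps|))_\eps]) = 0$ by Definition \ref{minmax}. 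For $(iv)\Rightarrow(i)$: from $\min(|r_\eps|,|s_\eps|)$ negligible we get $|r_\eps s_\eps| = |r_\eps|\,|s_\eps| \le \max(|r_\eps|,|s_\eps|)\cdot\min(|r_\eps|,|s_\eps|)$, and since $(\max(|r_\eps|,|s_\eps|))_\eps$ is moderate, the product is negligible, so $rs=0$.

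The main obstacle is the construction in $(i)\Rightarrow(ii)$: one must produce an honest element $x$ of $\Ksm$ (not merely of $\Kt$, where the characteristic function $e_S$ would do the job immediately, cf.\ \cite{AJOS08,V10}), so the cutoff must depend continuously on $\eps$ and the regions ``$r$ small'' and ``$s$ small'' need not be separated by a gap — they may interlace as $\eps\to 0$. The trick is precisely to avoid a sharp threshold and instead use the ratio $|s_\eps|/(|r_\eps|+|s_\eps|)$ composed with a fixed smooth transition function, so that wherever both $r_\eps$ and $s_\eps$ are genuinely small the value of $x_\eps$ is irrelevant (both $r_\eps x_\eps$ and $s_\eps(1-x_\eps)$ are automatically negligible there), and wherever one of them is not small the ratio is pinned near $0$ or near $1$. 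Care is needed where $r_\eps+s_\eps$ vanishes or is tiny; this can be circumvented by replacing the denominator with $\max(|r_\eps|+|s_\eps|,\,e^{-1/\eps})$ or by noting that on the vanishing set both $r_\eps$ and $s_\eps$ are $0$ so any smooth extension works. Once $(x_\eps)_\eps$ is seen to be moderate and smooth, the verification of the two negligibility estimates is routine.
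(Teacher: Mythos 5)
Your overall architecture coincides with the paper's: the cycle $(i)\Rightarrow(ii)\Rightarrow(iii)\Rightarrow(i)$ plus the link to $(iv)$, with $(ii)\Rightarrow(iii)$ and $(iii)\Rightarrow(i)$ handled by the same purely formal ring arguments. For $(i)\Leftrightarrow(iv)$ you argue directly from $\min(|r_\eps|,|s_\eps|)^2\le|r_\eps s_\eps|$ and $|r_\eps s_\eps|=\max\cdot\min$ with $\max$ moderate; this is correct and more self-contained than the paper, which reduces to $\Rsm$ and invokes the general theorem that the equivalence holds in any reduced $f$-ring (using the earlier results that $\Rsm$ is reduced and an $f$-ring). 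The real divergence is in $(i)\Rightarrow(ii)$: the paper builds $x$ from a smooth gap function $\eta$ with $0<\eta(\eps)\le\eps^{m+N}$ on $[\eps_{m+1},\eps_m]$ and a partition of unity subordinate to $\{I\setminus V,\,U\}$ with $U=\{|r_\eps|<|s_\eps|+\eta(\eps)\}$, $V=\{|r_\eps|\le|s_\eps|-\eta(\eps)\}$, whereas you use the ratio cutoff $x_\eps=\chi\bigl(|s_\eps|/(|r_\eps|+|s_\eps|)\bigr)$. That construction can be made to work and is in the same spirit.

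However, your verification of it is wrong as written. With your formula, $x_\eps=0$ (not close to $1$) where $|r_\eps|$ dominates, and on that region it is $|s_\eps|=\min(|r_\eps|,|s_\eps|)$, not $|r_\eps|$, that is negligible; you have swapped the roles of $r$ and $s$ and misattributed the reason for both negligibility claims. The correct argument is: where the ratio is $\le\frac12$ one has $x_\eps=0$, so $r_\eps x_\eps=0$ and $|s_\eps(1-x_\eps)|=|s_\eps|=\min\le\sqrt{|r_\eps s_\eps|}$; where the ratio exceeds $\frac12$ one has $|r_\eps|=\min$, so $|r_\eps x_\eps|\le|r_\eps|\le\sqrt{|r_\eps s_\eps|}$, and for the remaining term one needs the quantitative bound $1-x_\eps\le L\bigl(1-\tfrac{|s_\eps|}{|r_\eps|+|s_\eps|}\bigr)\le L\,|r_\eps|/|s_\eps|$ (Lipschitz constant of $\chi$), giving $|s_\eps|(1-x_\eps)\le L|r_\eps|$. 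Your phrase ``the ratio is pinned near $1$'' is not by itself sufficient, since $|s_\eps|$ may be as large as $\eps^{-N}$ there; the Lipschitz estimate is exactly what saves the argument and must be stated. Finally, on the set where $r_\eps=s_\eps=0$ the ratio need not admit any continuous extension, so ``any smooth extension works'' is not a valid fix; your alternative of replacing the denominator by $\max(|r_\eps|+|s_\eps|,e^{-1/\eps})$ is the one to use (there both $r_\eps$ and $s_\eps$ are already negligible, so the value of $x_\eps\in[0,1]$ is irrelevant). With these corrections your route goes through and is a legitimate alternative to the paper's partition-of-unity construction.
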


 We show this along the lines of the proof of \cite[Lem.~2.3]{V10}, where the result was verified for $\Kt$.

\begin{proof}
 $(i) \Rightarrow (ii)$ Let $rs = 0$. 
The cases $r=0$ or $s=0$ being obvious, we may assume that both $r$ and $s$ are zero divisors. For all $m \in \N$ there exists $\eps_m > 0$ such that $\vert r_{\eps} s_{\eps} \vert < \eps^m$ for all $\eps < \eps_m$ by $(i)$. Without loss of generality we can assume that $(\eps_m)_m$ is a decreasing sequence. Moreover, by moderateness of $(r_{\eps})_{\eps}$ and $(s_{\eps})_{\eps}$ we have an $N \in \N$ such that $\vert r_{\eps} \vert < \eps^{-N}$ and $\vert s_{\eps} \vert < \eps^{-N}$ for $\eps$ sufficiently small. 
Using a partition of unity argument (cf., e.g., \cite[Lem.\ 2.7.3]{GKOS2001}) we obtain a function $\eta \in \Cinf(I,\R)$ such that
\begin{equation*}
 0 < \eta(\eps) \leq \eps^{m+N} \quad \text{for } \eps \in [\eps_{m+1},\eps_m].
\end{equation*}
Let
\begin{eqnarray*}
 U & := & \{ \eps \in I : \vert r_{\eps} \vert < \vert s_{\eps} \vert + \eta(\eps) \} \\
 V & := & \{ \eps \in I : \vert r_{\eps} \vert \leq \vert s_{\eps} \vert - \eta(\eps) \}
\end{eqnarray*}
By continuous dependence of $r,s$ and $\eta$ on $\eps$, $U$ is open and $V$ is closed in $I$. Using a partition of unity
subordinate to $\{I\setminus V,\, U\}$ we obtain a smooth bump function $I\to [0,1]$, $\eps\mapsto x_\eps$ with
$\left. x \right\vert_V = 1$, $\left. x \right\vert_U \leq 1$ and $\left. x \right\vert_{I \setminus U} = 0$.
In particular, $(x_{\eps})_{\eps} \in \Emsm$.

 Therefore we have that (using $\Ksm \cong \Kco$ by Theorem \ref{iso} and calculating in $\Emco$)
\begin{eqnarray*}
 0 \;\; \leq \;\; (\vert r \vert x)^2 & = & \left\lbrace \begin{array}{ll}
					 (\vert r_{\eps} \vert x_{\eps})^2 & \text{if }\eps \in U \\
					 0 & \text{else}
                                        \end{array} \right. \\
 & \stackrel{x_{\eps}^2 \leq 1}{\leq} & \left\lbrace \begin{array}{ll}
		    \vert r_{\eps} \vert (\vert s_{\eps} \vert + \eps^{m+N}) & \text{if } \eps \in U \\
		    0 & \text{else}
                   \end{array} \right. \\
 & < & 2 \eps^m \quad \text{for } \eps \text{ sufficiently small}
\end{eqnarray*}
since $\vert r_{\eps} s_{\eps} \vert < \eps^m$ and $\vert r_{\eps} \vert \eps^{m+N} < \eps^{-N} \eps^{m+N} = \eps^m$
for such $\eps$. Hence
\begin{equation*}
 r x = 0.
\end{equation*}
 Similarly,
\begin{eqnarray*}
 0 \;\; \leq \;\; (\vert s \vert (1-x))^2 & = & \left\lbrace \begin{array}{ll}
					 (\vert s_{\eps} \vert (1- x_{\eps}))^2 & \text{if }\eps \notin V \\
					 0 & \text{else}
                                        \end{array} \right. \\
 & \stackrel{(1-x_{\eps})^2 \leq 1}{\leq} & \left\lbrace \begin{array}{ll}
		    \vert s_{\eps} \vert (\vert r_{\eps} \vert + \eps^{m+N}) & \text{if } \eps \notin V \\
		    0 & \text{else}
                   \end{array} \right. \\
 & < & 2 \eps^m \quad \text{for } \eps \text{ sufficiently small}.
\end{eqnarray*}
 Thus also
\begin{equation*}
 s (1-x) = 0.
\end{equation*}

 $(ii) \Rightarrow (iii)$ By $(ii)$ there exists $x \in \Ksm$ such that $x \in \Ann{r}$ and $1-x \in \Ann{s}$. For any
$t \in \Ksm$, $t = xt + (1-x)t$. Since annihilators are ideals in the ring, $xt \in \Ann{r}$ and $(1-x)t \in \Ann{s}$. 

 $(iii) \Rightarrow (i)$ By $(iii)$ we may write $1 = x + y$ for $x \in \Ann{r}$ and $y \in \Ann{s}$. Therefore
\begin{equation*}
 rs = rs 1 = rs (x+y) = (rx)s + r(sy) = 0.
\end{equation*}

 $(i) \Leftrightarrow (iv)$ As $rs=0$ is equivalent to $\vert r \vert \vert s \vert = 0$, we may assume that $r,s \in \Rsm$. By Sec.\ \ref{zerosec} $\Ksm$ is a reduced ring and by Proposition \ref{fring} it is an $f$-ring. Since the equivalence holds in any reduced f-ring (see \cite[Thm.~9.3.1]{BKW77}) we are done.
\end{proof}

 From the equivalence of $(i)$ and $(iii)$ we can deduce another property of rings of generalized numbers, namely normality. Since we are dealing with reduced rings, we may use the following definition (cf.\ \cite[Sec.\ 2.3]{V10} for different equivalent conditions):

\begin{defi}
 A reduced commutative $f$-ring $R$ with $1$ is called \emph{normal} if for all $r,s \in R$ with $rs=0$ we can write $R = \Ann{r} + \Ann{s}$.
\end{defi}

\begin{cor}
 $\Rt$ and $\Rsm$ are (reduced) normal $f$-rings.
\end{cor}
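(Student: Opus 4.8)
The plan is to read off both statements directly from the structural facts already assembled. First I would list the ingredients in the definition of a reduced normal $f$-ring and check them one at a time: $\Rt$ and $\Rsm$ are commutative rings with $1$; they are reduced ($\Rt$ by \cite[Sec.~2.1]{V10}, and $\Rsm$ because it is a subring of the reduced ring $\Rt$, as recorded in Section~\ref{zerosec}); and they are $f$-rings ($\Rt$ by \cite[Prop.~2.2]{V10}, $\Rsm$ by Proposition~\ref{fring}). Hence in each case only the annihilator condition remains, namely that $rs=0$ implies $R = \Ann{r}+\Ann{s}$.

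For $\Rsm$ this is exactly the implication $(i)\Rightarrow(iii)$ of Theorem~\ref{normal}. The one point to verify is that the argument does not leave the real ring: given $r,s\in\Rsm$ with $rs=0$, the element $x$ produced in the step $(i)\Rightarrow(ii)$ is the class of a smooth bump function $I\to[0,1]$ and therefore already lies in $\Rsm$, so that for every $t\in\Rsm$ the splitting $t = xt + (1-x)t$ writes $t$ as a sum of an element annihilating $r$ and an element annihilating $s$; thus $\Rsm = \Ann{r}+\Ann{s}$. For $\Rt$ the same reasoning applies with Theorem~\ref{normal} replaced by its $\Kt$-analogue \cite[Lem.~2.3]{V10}, whose proof likewise furnishes a real-valued multiplier when $r$ and $s$ are real. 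Combining these observations with the $f$-ring and reducedness properties above yields the corollary.

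I do not expect a genuine obstacle here: the statement is essentially bookkeeping, extracting ``normality'' from Theorem~\ref{normal} (respectively \cite[Lem.~2.3]{V10}) together with the already-known ring-theoretic properties. The only step requiring a moment's attention is the descent from $\Ksm$ to its real subring $\Rsm$ in the annihilator identity, and this is immediate once one notes that the multiplier $x$ in the proof of Theorem~\ref{normal} can be chosen real.
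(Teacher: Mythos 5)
Your proposal is correct and follows essentially the same route as the paper, which likewise reads off reducedness from Section~\ref{zerosec}, the $f$-ring property from Proposition~\ref{fring}, and normality from Theorem~\ref{normal} (with the $\Kt$-case resting on the corresponding result of Vernaeve). Your extra remark that the multiplier $x$ can be taken real so the annihilator identity descends to $\Rsm$ is a reasonable point of care, but it is the same bookkeeping argument.
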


\begin{proof}
The property of being a reduced ring was noted at the beginning of Sec.\ \ref{zerosec}, the other claims follow from Proposition~\ref{fring} and Theorem~\ref{normal}.
\end{proof}

\subsection{Ideals}

 In recent years, various properties of ideals in the ring $\Kt$ of generalized numbers have been studied. Previous investigations have led, among others, to a complete description of the maximal ideals (see \cite[Thm.~4.20]{AJ01}), minimal prime ideals (see \cite[Cor.~4.7]{AJOS08}) and prime ideals (see \cite[Thm.~3.6]{V10}) in $\Kt$. In this section we initiate a similar study for the ring $\Ksm$ of generalized numbers with smooth parameter dependence and provide some basic properties of its ideals.

  Let $R$ be a commutative ring with $1$. An ideal $J$ in $R$ is denoted by $J \trianglelefteq R$, a proper ideal is denoted by $J \vartriangleleft R$. 
 Moreover, we call $J \vartriangleleft R$ \emph{prime} if for all $r,s \in R$ with $rs \in J$ we have that $r \in J$ or $s \in J$. 
A proper ideal $J$ is called \emph{maximal} if the only ideal properly containing it is $R$ itself.
$J \trianglelefteq R$ is called \emph{idempotent} if $J^2 = J$.

 The radical of an ideal $J \vartriangleleft R$ is denoted by $\sqrt{J} = \{r \in R | \exists n \in N : x^n \in J \}  = \bigcap_{\substack{J \subseteq P \\ P~\text{prime}}} P$ (see, for example, \cite[Cor.~0.18]{GJ60}). An ideal $J \trianglelefteq R$ is called \emph{radical} if $J = \sqrt{J}$.
To begin with, we investigate convexity of ideals in $\Ksm$.
\begin{defi}
 Let $R$ be a partially ordered ring and $J \trianglelefteq R$ an ideal. $J$ is said to be \emph{convex} if $0 \leq y \leq x$ and $x \in J$ imply that $y \in J$.

 An ideal $J$ in an l-ring $R$ is called \emph{absolutely convex} (or \emph{l-ideal}) if $|y| \leq |x|$ and $x \in J$ imply $y \in J$.
\end{defi}

 In \cite[Prop.~3.7]{AJOS08} it was shown that every ideal in $\Kt$ is absolutely convex. For $\Rsm$ we firstly have

\begin{prop} \label{convex}
 All ideals in $\Rsm$ are convex.
\end{prop}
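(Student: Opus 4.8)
The plan is to exploit that ideals absorb ring multiplication: given $x\in J$ and $0\le y\le x$ in $\Rsm$, it suffices to produce $t\in\Rsm$ with $y=tx$, for then $y=tx\in J$. The naive choice $t_\eps:=y_\eps/x_\eps$ is useless here, since it need not be defined — let alone smooth — where $x_\eps$ vanishes. The remedy is the by now familiar $e^{-1/\eps}$-trick used in the proof of Theorem~\ref{iso}: regularise the denominator so that it stays strictly positive while perturbing $x$ only by a negligible amount.

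In detail, I would first use Definition~\ref{po} (cf.\ also Remark~\ref{order2}) to pick representatives $(x_\eps)_\eps\in\Emsm$ of $x$ and $(y_\eps)_\eps\in\Emsm$ of $y$ with $0\le y_\eps\le x_\eps$ for all $\eps\in I$; that the order on $\Rsm$ can be realised by such a pair of smooth representatives is the standard pointwise reformulation of $\le$. Then define
\[
 t_\eps:=\frac{y_\eps}{x_\eps+e^{-1/\eps}}\qquad(\eps\in I).
\]
Since $x_\eps\ge 0$, the denominator is everywhere at least $e^{-1/\eps}>0$, so $(t_\eps)_\eps$ is smooth in $\eps$; and $0\le y_\eps\le x_\eps<x_\eps+e^{-1/\eps}$ forces $0\le t_\eps<1$, so $(t_\eps)_\eps$ is bounded and hence belongs to $\Emsm$, defining $t:=[(t_\eps)_\eps]\in\Rsm$.

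It remains to verify $y=tx$. A direct computation gives
\[
 y_\eps-t_\eps x_\eps
 = y_\eps\cdot\frac{e^{-1/\eps}}{x_\eps+e^{-1/\eps}}
 = t_\eps\,e^{-1/\eps},
\]
so $0\le y_\eps-t_\eps x_\eps< e^{-1/\eps}<\eps^m$ for every $m\in\N$ and all sufficiently small $\eps$; hence $(y_\eps-t_\eps x_\eps)_\eps\in\Nsm$, i.e.\ $y=tx$ in $\Rsm$. As $x\in J$ and $J$ is an ideal, $y\in J$, which is precisely convexity. The only genuinely non-routine step is the observation that the denominator must be perturbed to preserve smoothness; the two estimates — boundedness of $t_\eps$ and negligibility of $y_\eps-t_\eps x_\eps$ — are then immediate, and it is the boundedness of $t_\eps$ that keeps the construction inside $\Emsm$ rather than only $\Emco$.
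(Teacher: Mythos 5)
Your proof is correct and follows essentially the same route as the paper: both realise $y$ as a bounded smooth multiple of $x$ by regularising with $e^{-1/\eps}$, the only cosmetic difference being that the paper adds the negligible net to the representatives themselves (making $y=ax$ exact) while you perturb only the denominator and then check that the resulting error $t_\eps e^{-1/\eps}$ is negligible.
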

\begin{proof}
 Let $J \trianglelefteq \Rsm$, $x \in J$ and $0 \leq y \leq x$. Without loss of generality we may consider representatives $(x_{\eps})_{\eps}$, $(y_{\eps})_{\eps}$ such that $0 < y_{\eps} \leq x_{\eps}$ for all $\eps \in I$ (otherwise add $(e^{-\frac{1}{\eps}})_{\eps} \in \Nsm$ to non-negative representatives). Thus $(a_{\eps})_{\eps}$ defined by
$$a_{\eps} := \frac{y_{\eps}}{x_{\eps}} \qquad \forall \eps \in I$$
 is well-defined, smooth and bounded by $1$, hence moderate. Since $x \in J$ and  $y = a x$ we also have that $y \in J$. 
\end{proof}

 In order to prove that ideals are in fact absolutely convex, we show the following Lemma on $\Rsm$ and $\Csm$:

\begin{lem} \label{absc}
 Let $J \unlhd \Ksm$ and $x \in J$. Then $|x| \in J$.
\end{lem}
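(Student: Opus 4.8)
The plan is to reduce the statement to the convexity result just established (Proposition \ref{convex}), together with the isomorphism $\Ksm\cong\Kco$ (Theorem \ref{iso}). First I would dispose of the real case and then handle the complex case by splitting into real and imaginary parts.

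For $r\in\Rsm$: given an ideal $J\unlhd\Rsm$ and $x\in J$, I want to show $|x|\in J$. Write $|x| = x^+ + x^-$ where $x^+ = \max(x,0) = \frac12(|x|+x)$ and $x^- = \max(-x,0) = \frac12(|x|-x)$, using Definition \ref{minmax}; these are well-defined elements of $\Rsm$. Now $0\le x^+ \le |x|$ and $0\le x^-\le |x|$. To apply Proposition \ref{convex} I need $|x|\in J$ — which is what I am trying to prove — so this direct route is circular. Instead I argue: $0\le x^+ \le |x|$, and also $x^+ \cdot x = (x^+)^2 \ge 0$ while $x^+\cdot(-x^-)=0$, so $x = x^+ - x^-$ gives $x\cdot x^+ = (x^+)^2$. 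The clean way is: since $x\in J$, both $x$ and $-x$ lie in $J$, hence $x^+ = \frac12(|x|+x)$ and $x^- = \frac12(|x|-x)$ — but again these involve $|x|$. The genuinely non-circular observation is that $x^+$ can be written as $a x$ for a moderate smooth net $a$: taking representatives with $x_\eps^+ = \max(x_\eps,0)$ and setting $a_\eps := x_\eps^+/x_\eps$ on $\{x_\eps\neq 0\}$ and $0$ otherwise fails to be smooth. So the argument must instead mimic the proof of Proposition \ref{convex}: choose a representative $(x_\eps)_\eps$ of $x$, and produce a smooth bounded net $(a_\eps)_\eps$ with $|x_\eps| = a_\eps x_\eps$ up to a negligible error. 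One takes $a_\eps := \chi(x_\eps)$ where $\chi\in\Cinf(\R,[-1,1])$ equals $\sign$ outside a small neighborhood of $0$; then $a_\eps x_\eps$ differs from $|x_\eps|$ only where $|x_\eps|$ is small, so $(a_\eps x_\eps - |x_\eps|)_\eps\in\Nco$, and after correcting by an element of $\Nsm$ (available since $\tau_{sm}$ is an isomorphism, Theorem \ref{iso}) we get $|x| = a x$ in $\Ksm$ with $a\in\Ksm$, whence $|x|\in J$.

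For $r\in\Csm$: write $x = y + iz$ with $y = \operatorname{Re}x$, $z = \operatorname{Im}x \in\Rsm$. Note $y,z\in J$ (they are obtained from $x$ and $\bar x = y - iz$, and $\bar x\in J$ since $\bar x = \overline{(x_\eps)}_\eps$ is visibly in $J$ — more simply, $J$ is a $\Ksm$-submodule and we can extract $y,z$ as $\Ksm$-linear... actually $i\notin\Rsm$, so instead: $y = \frac12(x+\bar x)\in J$ and $z = \frac{1}{2i}(x - \bar x)\in J$, valid since $\frac12,\frac{1}{2i}\in\Ksm$). By the real case, $|y|,|z|\in J$ (applying the result to the ideal $J\cap\Rsm$ of $\Rsm$, or noting $|y|\in\Rsm\subseteq\Ksm$ and $|y| = a y$ for moderate $a$ as above, so $|y|\in J$). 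Finally $|x| = \sqrt{y^2+z^2}$, and I claim $|x|\le |y| + |z|$ while also $|x|\in J$ directly: write $|x_\eps| = a_\eps y_\eps + b_\eps z_\eps$ where $a_\eps = \chi(y_\eps)y_\eps/\sqrt{\cdots}$... this is getting complicated, so the cleanest finish is: $|x|^2 = y^2 + z^2 = y\cdot y + z\cdot z \in J$ since $y,z\in J$; then $0\le |x|\le |x|^2 + 1$ is not quite useful, but $|x| = (|x|^2)\cdot c$ for a moderate smooth $c$ obtained by dividing by $|x|$ away from its small set — precisely, $|x_\eps| = c_\eps\cdot(y_\eps^2+z_\eps^2)$ with $c_\eps := \psi(y_\eps^2+z_\eps^2)$ for a suitable $\psi\in\Cinf(\R,\R)$ with $\psi(t) = t^{-1/2}$ for $t$ bounded away from $0$ and $\psi$ moderate-growth near $0$; since $y^2+z^2 = |x|^2\in J$, we conclude $|x|\in J$.

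The main obstacle is the subtlety already visible in Proposition \ref{convex}: one cannot naively divide by $x_\eps$ (or $|x_\eps|$) since this destroys smoothness near zeros, so every step must pass through a smooth cutoff of the relevant function and then absorb the resulting error into $\Nsm$ via the identification $\Ksm\cong\Kco$ of Theorem \ref{iso}. Once the device "multiply by $\chi(\,\cdot\,)$ where $\chi$ is a smoothed sign/reciprocal" is in place, both the real and complex parts of the argument are routine.
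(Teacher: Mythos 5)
You have correctly located the difficulty (one cannot divide by $x_\eps$ without destroying continuity at its zeros), but the device you propose to overcome it does not work. A smoothed sign function $\chi$ agreeing with $\sign$ outside a \emph{fixed} neighbourhood $(-\delta,\delta)$ of $0$ produces an error $a_\eps x_\eps-|x_\eps|=(\chi(x_\eps)-\sign(x_\eps))\,x_\eps$ that vanishes only where $|x_\eps|\ge\delta$ and elsewhere is merely bounded by a constant multiple of $\delta$; it is \emph{not} negligible. For instance, if $x_\eps\equiv c$ with $0<c<\delta$ and $\chi(c)\neq 1$, then $a_\eps x_\eps-|x_\eps|=(\chi(c)-1)c$ is a nonzero constant, so $(a_\eps x_\eps-|x_\eps|)_\eps\notin\Nco$ and there is nothing to ``absorb into $\Nsm$''. (This does not contradict the lemma --- such an $x$ is a unit --- but it does break your construction of a multiplier $a$ with $ax=|x|$.) To make the error negligible, the cutoff scale must shrink with $\eps$ faster than every power $\eps^m$. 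That is precisely the content of the paper's proof: it introduces the continuous damping factors $b_m(\eps)=\eps^m/|x_\eps|$ where $|x_\eps|\ge\eps^m$ (and $b_m(\eps)=1$ otherwise), glues them with a partition of unity $(\chi_m)_m$ subordinate to the cover $\{(\tfrac{1}{m+1},\tfrac{1}{m-1})\}_m$ of $I$, and sets $a_\eps=e^{-i\arg(x_\eps)}\bigl(1-\sum_n b_n(\eps)\chi_n(\eps)\bigr)$; for $\eps\in(\tfrac{1}{m+1},\tfrac{1}{m}]$ this is continuous, bounded, and satisfies $\bigl|a_\eps x_\eps-|x_\eps|\bigr|\le 2\eps^m$. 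This $\eps$-dependent, superpolynomially shrinking cutoff is the essential idea, and it is missing from your argument.

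The complex case has a second gap: you need $y=\operatorname{Re}x\in J$ and $z=\operatorname{Im}x\in J$, which you derive from $\bar x\in J$; but an ideal of $\Csm$ is not a priori closed under conjugation --- the natural witness $\bar x=(\bar x_\eps/x_\eps)\cdot x$ is again discontinuous at zeros of $x_\eps$, which is the very problem at issue. A cleaner start would be $|x|^2=x\bar x\in J$, which holds simply because $\bar x\in\Csm$ and $J$ is an ideal, with no need for $y,z\in J$; but your final step $|x|=c\cdot|x|^2$ with $c_\eps=\psi(|x_\eps|^2)$ for a fixed $\psi$ then founders on the same fixed-cutoff problem as the real case. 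Note that the paper does not split into real and imaginary parts at all: it treats $\K=\C$ directly via the phase factor $e^{-i\arg(x_\eps)}$ combined with the damping construction above.
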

\begin{proof}
 According to Theorem~\ref{iso} we can work in $\Cco$. The proof for $\Rco$ proceeds along the same lines. Let $(x_{\eps})_{\eps} \in \Emco$ be a representative of $x$. We construct $(a_{\eps})_{\eps} \in \Emco$ such that $a x = |x|$.

 Fix $m \in \N$. Let $b_m: (0,1] \rightarrow (0,1]$ be defined by
\begin{eqnarray} \label{eq:b}
 b_m(\eps) & := & \left\lbrace \begin{array}{ll}
                                \frac{\eps^m}{|x_{\eps}|} & \text{if } |x_{\eps}| \geq \eps^m \\
                                1 & \text{else}
                               \end{array}
\right. .
\end{eqnarray}
Then $b_m$ is continuous. 
In order to obtain the necessary asymptotic behavior, we patch the $b_m$ together. Thus we consider the open cover $\mathcal{I} := \{ (\frac{1}{m+1},\frac{1}{m-1}) \}_{m > 1} \cup \{ (\frac{1}{3},1] \}$ of the interval $(0,1]$, and a corresponding (continuous) partition of unity $( \chi_m )_{m \in \N}$. By $\arg(z)$ we denote the argument of the complex number $z$. Let
\begin{eqnarray*}
 a_{\eps} & := & \left\lbrace \begin{array}{ll}
                               e^{- i \arg(x_{\eps})}( 1 - \sum_{m=1}^{\infty} b_m(\eps) \chi_m(\eps)) & \text{if } x_{\eps} \neq 0 \\
			       0 & \text{if } x_{\eps} = 0
                              \end{array}
 \right. \qquad \forall \eps \in (0,1].
\end{eqnarray*}
Suppose that $x_{\bar\eps} = 0$, $x_{\eps_k} \neq 0$ and $\eps_k\to \bar\eps$. 
Then
\begin{equation*}
 \lim_{k\to \infty} a_{\eps_k} = \lim_{k\to \infty} \big( \underbrace{e^{- i \arg(x_{\eps_k})}}_{|.| \leq 1} (1 - \sum_{m \in \N} b_m(\eps_k) \chi_m(\eps_k)) \big) = 0
\end{equation*}
due to \eqref{eq:b}. Thus $(a_{\eps})_{\eps} \in \co(I,\C)$. Furthermore, $(a_{\eps})_{\eps}$ is moderate:
\begin{equation} \label{eq:a}
 |a_{\eps}|  \leq  |e^{- i \arg(x_{\eps})}| \cdot |1-\sum_{m=0}^{\infty} b_m(\eps) \chi_m(\eps)| \le 2\\
\end{equation}
 It remains to show that $(a_{\eps} x_{\eps} - |x_{\eps}|)_{\eps} \in \Nco$. Since all terms are continuous in $\eps$ and 
$x_{\eps} = e^{i \arg(x_{\eps})} |x_{\eps}|$ it is sufficient to consider
\begin{equation} \label{eq:ax}
 |a_{\eps} - e^{-i \arg(x_{\eps})}||x_{\eps}|
\end{equation}
 in the following cases (we assume that $\eps \in (\frac{1}{m+1},\frac{1}{m}]$ throughout):
\begin{itemize}
 \item $|x_{\eps}| < \eps^{m+1}$: By \eqref{eq:b} and \eqref{eq:a}, $a_{\eps} 
 = 0$, so \eqref{eq:ax}$\, = 1 \cdot |x_{\eps}| < \eps^{m+1}$.
 \item $\eps^{m+1} \leq |x_{\eps}| < \eps^{m}$: In this case $a_{\eps} = e^{-i \arg(x_{\eps})} (1 - \frac{\eps^{m+1}}{|x_{\eps}|}\chi_{m+1}(\eps) - \chi_m(\eps))$, so \eqref{eq:ax}$ \, \le \big( \frac{\eps^{m+1}}{|x_{\eps}|} + 1 \big) |x_{\eps}| < \eps^{m+1} + \eps^m$.
 \item $|x_{\eps}| \geq \eps^m$: Here, $a_{\eps} = e^{-i \arg(x_{\eps})} \big(1 - \frac{\eps^{m+1}}{|x_{\eps}|}\chi_{m+1}(\eps) - \frac{\eps^{m}}{|x_{\eps}|}\chi_{m}(\eps) \big)$, so as above \eqref{eq:ax}$\, \leq \eps^{m+1} + \eps^m$.
\end{itemize}
 Summing up, we obtain for all $m \in \N$ that
\begin{equation}
 |a_{\eps} - e^{-i \arg(x_{\eps})}| |x_{\eps}| < 2 \eps^m \qquad \text{ for } \eps \leq \frac{1}{m}.
\end{equation}
 Thus $(a_{\eps}x_{\eps} - |x_{\eps}|)_{\eps} \in \Nco$, and hence $|x| = [(|x_{\eps}|)_{\eps}] \in J$.
\end{proof}

\begin{prop} \label{absc1}
 All ideals in $\Rsm$ are absolutely convex.
\end{prop}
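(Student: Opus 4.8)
The plan is to deduce Proposition~\ref{absc1} from the combination of Proposition~\ref{convex} and Lemma~\ref{absc}, which together already encapsulate the two essential facts needed. Let $J \unlhd \Rsm$, let $x \in J$, and suppose $y \in \Rsm$ satisfies $|y| \le |x|$; we must show $y \in J$.

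First I would apply Lemma~\ref{absc} to conclude that $|x| \in J$. Since $J$ is an ideal, this is harmless, but it is the crucial step: it lets us replace the possibly sign-changing generator $x$ by its absolute value, which is a non-negative element of $J$. Next, from $|y| \le |x|$ and $|x| \in J$, Proposition~\ref{convex} (convexity of ideals in $\Rsm$, applied to the chain $0 \le |y| \le |x|$) yields $|y| \in J$. Finally, applying Lemma~\ref{absc} once more — this time to $|y| \in J$ — would not directly give $y$, so instead I would argue as follows: $y$ and $|y|$ differ by a unit multiple in the following sense. Choose a representative $(y_\eps)_\eps$ of $y$ (after adding $(e^{-1/\eps})_\eps \in \Nsm$ if necessary we may take a representative that is nowhere zero, or handle the zero set as in the proof of Lemma~\ref{absc}); then $(\sign(y_\eps))_\eps$ is not smooth, so one cannot simply write $y = \sign(y)\cdot|y|$ inside $\Rsm$. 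The clean way around this is to observe that Lemma~\ref{absc} was proved by exhibiting a moderate $(a_\eps)_\eps$ with $a x = |x|$; the same construction, run with the roles reversed, produces a moderate smooth (in the $\Kco$-picture, continuous) factor $(c_\eps)_\eps$ with $c \cdot |y| = y$. Hence $y = c|y| \in J$ since $|y| \in J$ and $J$ is an ideal.

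A more economical route, which I would prefer to present, avoids re-running the construction: work in $\Cco \cong \Csm$ via Theorem~\ref{iso}, fix a representative $(y_\eps)_\eps \in \Emco$ of $y$, and directly build $(c_\eps)_\eps \in \Emco$ with $c_\eps |y_\eps| = y_\eps$ up to a negligible net, by the \emph{same} patching-of-$b_m$'s device used in Lemma~\ref{absc} but with the factor $e^{i\arg(y_\eps)}$ in place of $e^{-i\arg(y_\eps)}$ and with $|y_\eps|$ playing the role previously played by $1$; continuity at the zeros of $y_\eps$ follows exactly as there because the correcting factors $b_m(\eps)\chi_m(\eps)$ force $c_\eps \to 0$ whenever $y_\eps \to 0$. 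This gives $y = c\,|y|$ in $\Rsm$, and since $|y| \in J$ we conclude $y \in J$. The argument for $\Csm$ in place of $\Rsm$ is the same, mutatis mutandis, though the statement is phrased only for $\Rsm$.

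The main obstacle is precisely the last point: one must resist the temptation to write $y = \sign(y)|y|$, since $\sign$ destroys smoothness, and instead notice that the only thing needed is a \emph{moderate} factor relating $y$ and $|y|$, not a sign. Lemma~\ref{absc} already supplies the template for manufacturing such a factor while preserving continuity/smoothness in $\eps$; reusing it (or citing its proof) is what makes Proposition~\ref{absc1} a short corollary rather than a fresh construction. Everything else — the appeal to Proposition~\ref{convex} for the inequality $0 \le |y| \le |x|$ and to Lemma~\ref{absc} for $|x| \in J$ — is immediate.
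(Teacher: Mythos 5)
Your proposal is correct and rests on exactly the same two ingredients as the paper's proof --- Proposition~\ref{convex} (all ideals are convex) and Lemma~\ref{absc} ($x\in J$ implies $|x|\in J$) --- but you close the argument differently. The paper simply invokes \cite[Thm.~5.3]{GJ60}, which states that a convex ideal is absolutely convex if and only if it is closed under taking absolute values, so nothing further needs to be constructed. You instead re-derive this implication by hand: after obtaining $|y|\in J$ from convexity, you manufacture a moderate continuous factor $c$ with $y=c\,|y|$ by re-running the patching construction from the proof of Lemma~\ref{absc} with $e^{i\arg(y_\eps)}$ in place of $e^{-i\arg(y_\eps)}$. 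This does work --- the same $b_m$/$\chi_m$ estimates give negligibility of $c_\eps|y_\eps|-y_\eps$ and continuity at the zeros of $y_\eps$ --- and you are right that the naive $y=\sign(y)|y|$ is not available. But it is heavier than necessary: once you know $|x|\in J$, the standard order-theoretic trick (which is essentially the content of the cited Gillman--Jerison theorem) finishes immediately: from $|y|\le|x|$ one gets $-|x|\le y\le|x|$, hence $0\le y+|x|\le 2|x|\in J$, so convexity gives $y+|x|\in J$ and therefore $y\in J$. That one-line argument makes your intermediate step $|y|\in J$ and the entire construction of $c$ superfluous; what your route buys in exchange is self-containedness (no external citation) and an explicit exhibition of the factor relating $y$ and $|y|$ in $\Ksm$, which is a mildly interesting fact in its own right.
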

\begin{proof}
 By Proposition~\ref{convex}, all ideals in $\Rsm$ are convex. According to \cite[Thm.~5.3]{GJ60}, a convex ideal $J \unlhd \Rsm$ is absolutely convex if and only if $x \in J$ implies that $|x| \in J$. This is Lemma~\ref{absc}.
\end{proof}

 Moreover, we can deduce from Lemma~\ref{absc} that all finitely generated ideals in $\Rsm$ and $\Csm$ are in fact principal ideals. 
\begin{prop}
 Let $r, s \in \Ksm$. Then
\begin{enumerate}
 \item $r \Ksm + s \Ksm = (|r|+|s|)\Ksm = (|r|\vee|s|)\Ksm$
 \item $r \Ksm \cap s \Ksm = (|r|\wedge|s|)\Ksm$.
\end{enumerate}
\end{prop}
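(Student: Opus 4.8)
The plan is to reduce both identities to statements about the ideals $|r|\Ksm$ and $|s|\Ksm$, via the fact that $r\Ksm = |r|\Ksm$ for every $r\in\Ksm$, and then to invoke convexity of ideals --- the only genuinely new work being one inclusion in part~(ii). Throughout I would work with continuous representatives, using $\Ksm\cong\Kco$ from Theorem~\ref{iso} (which is also what makes Definition~\ref{absval} meaningful). To get $r\Ksm = |r|\Ksm$: Lemma~\ref{absc} already gives $|r|\in r\Ksm$, so $|r|\Ksm\subseteq r\Ksm$; conversely, if $a\in\Ksm$ denotes the multiplier constructed in the proof of Lemma~\ref{absc}, so that $a_\eps = e^{-i\arg r_\eps}\bigl(1-\sum_m b_m(\eps)\chi_m(\eps)\bigr)$ wherever $r_\eps\ne 0$, then the conjugate $\overline a\in\Ksm$ satisfies $\overline{a_\eps}\,|r_\eps| = r_\eps\bigl(1-\sum_m b_m(\eps)\chi_m(\eps)\bigr) = r_\eps + n_\eps$ with $(n_\eps)_\eps\in\Nco$ by the same estimates as in that proof; hence $r\in|r|\Ksm$. (In the real case $\overline a = a$.)

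Granting this, part~(i) is short. From $r\Ksm = |r|\Ksm$ and $s\Ksm = |s|\Ksm$ we get $r\Ksm + s\Ksm = |r|\Ksm + |s|\Ksm \ni |r|+|s|$, giving $(|r|+|s|)\Ksm \subseteq r\Ksm + s\Ksm$. Conversely, $(|r|+|s|)\Rsm$ is convex by Proposition~\ref{convex}, and $0\le|r|\le|r|+|s|$, $0\le|s|\le|r|+|s|$ in $\Rsm$ force $|r|,|s|\in(|r|+|s|)\Rsm\subseteq(|r|+|s|)\Ksm$, so $|r|\Ksm+|s|\Ksm\subseteq(|r|+|s|)\Ksm$. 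The equality $(|r|+|s|)\Ksm = (|r|\vee|s|)\Ksm$ then follows from the same convexity applied to $0\le|r|\vee|s|\le|r|+|s|$ and $0\le|r|+|s|\le 2(|r|\vee|s|)$ (note $|r|,|s|,|r|\vee|s|\in\Rsm$, which is where Proposition~\ref{convex} applies).

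For part~(ii) one again has $r\Ksm\cap s\Ksm = |r|\Ksm\cap|s|\Ksm$, and the inclusion $\supseteq$ is immediate: $0\le|r|\wedge|s|\le|r|$ and $\le|s|$, so convexity puts $|r|\wedge|s|$ into both $|r|\Ksm$ and $|s|\Ksm$. The inclusion $\subseteq$ is the heart of the matter, and I would mirror the proof of $(i)\Rightarrow(ii)$ in Theorem~\ref{normal}. Let $t\in|r|\Ksm\cap|s|\Ksm$, and pick continuous representatives together with $a,b\in\Ksm$ such that $a_\eps|r_\eps| = t_\eps + (\text{negl.})$ and $b_\eps|s_\eps| = t_\eps + (\text{negl.})$. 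Choose a positive smooth $\eta$ on $I$ with $(|a_\eps|+|b_\eps|)\,\eta(\eps)$ negligible --- e.g.\ $\eta(\eps) = e^{-1/\eps}$, since $|a_\eps|+|b_\eps| = O(\eps^{-N})$ --- and set $U := \{\eps : |r_\eps| < |s_\eps| + \eta(\eps)\}$ (open) and $V := \{\eps : |r_\eps|\le|s_\eps| - \eta(\eps)\}$ (closed, $V\subseteq U$). A smooth partition of unity subordinate to $\{I\setminus V,\, U\}$ provides $\psi : I\to[0,1]$ with $\psi|_V = 1$, $\psi|_{I\setminus U} = 0$; put $c_\eps := \psi(\eps)a_\eps + (1-\psi(\eps))b_\eps \in \Emco$. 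On $V$, $\min(|r_\eps|,|s_\eps|) = |r_\eps|$ and $c_\eps = a_\eps$; on $I\setminus U$, $\min(|r_\eps|,|s_\eps|) = |s_\eps|$ and $c_\eps = b_\eps$; on $U\setminus V$, $\bigl||r_\eps| - |s_\eps|\bigr|\le\eta(\eps)$, so $a_\eps\min(|r_\eps|,|s_\eps|)$ and $b_\eps\min(|r_\eps|,|s_\eps|)$ --- and hence their convex combination $c_\eps\min(|r_\eps|,|s_\eps|)$ --- agree with $t_\eps$ up to a negligible term and a term of modulus $\le(|a_\eps|+|b_\eps|)\,\eta(\eps)$. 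In each case $(c_\eps(|r_\eps|\wedge|s_\eps|) - t_\eps)_\eps \in \Nco$, so $t = c\,(|r|\wedge|s|)\in(|r|\wedge|s|)\Ksm$.

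The main obstacle is exactly this interpolation step: defining $c_\eps$ to be $a_\eps$ on $\{|r_\eps|\le|s_\eps|\}$ and $b_\eps$ elsewhere breaks continuity at the switching set, so one has to blend the two on the thin zone $U\setminus V$ and then kill the error by letting $\eta$ decay faster than the moderate growth of $a$ and $b$. This is the same ``continuity for the price of a negligible correction'' device already used in Proposition~\ref{convex}, Lemma~\ref{absc} and Theorem~\ref{normal}, so no essentially new ingredient is required.
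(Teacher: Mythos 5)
Your proof is correct, but it takes a genuinely different route from the paper. The paper's proof is essentially a citation: having established that $\Rsm$ is an $f$-ring (Proposition~\ref{fring}) and that all its ideals are absolutely convex (Proposition~\ref{absc1}), it invokes \cite[Prop.~8.2.8]{BKW77} and \cite[Prop.~9.1.8]{BKW77} and transfers the result to $\Csm$ via the correspondence of ideals. You instead give a self-contained, constructive argument: you extract from the proof of Lemma~\ref{absc} not only $|r|\in r\Ksm$ but also the converse $r\in|r|\Ksm$ (via the conjugate multiplier $\overline a$, which is a nice observation the paper never states explicitly), reduce everything to the principal ideals $|r|\Ksm$, $|s|\Ksm$, handle (i) and the easy inclusion of (ii) by convexity (Proposition~\ref{convex}), and prove the hard inclusion of (ii) by the partition-of-unity interpolation on the thin zone $U\setminus V$, exactly in the spirit of the proof of Theorem~\ref{normal}. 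All the estimates check out: $r_\eps\sum_m b_m(\eps)\chi_m(\eps)$ is $O(\eps^m)$ on $(\frac{1}{m+1},\frac1m]$, so $\overline a\,|r|=r$ in $\Kco$; and your choice $\eta(\eps)=e^{-1/\eps}$ kills the blending error $(|a_\eps|+|b_\eps|)\eta(\eps)$ against the moderate growth of $a,b$. What the paper's approach buys is brevity and the fact that the statement is seen to hold in any reduced $f$-ring with absolutely convex ideals; what yours buys is independence from \cite{BKW77} and an explicit exhibition of the multipliers realizing each ideal membership, which in effect re-proves the needed special cases of those lattice-ring results in the $\Ksm$-setting.
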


\begin{proof}
 Both statements can be proved along the same lines as the corresponding ones for 
$\Kt$ in \cite[Lem.~3.1]{V10}: $\Rsm$ is an f-ring (by Proposition~\ref{fring}), and all ideals are absolutely convex (by Proposition~\ref{absc1}). Thus $(i)$ and $(ii)$ follow from \cite[Prop.~8.2.8]{BKW77} and \cite[Prop.~9.1.8]{BKW77}, respectively. The results can be transferred to ideals in $\Csm$ by using the bijective correspondence between ideals in $\Csm$ and $\Rsm$ (analogous to \cite[Sec.~2.4]{V10}).
\end{proof}

 Furthermore, we can characterize powers and radicals of ideals in $\Ksm$. In what follows, 
 $\langle A \rangle$ denotes the ideal generated by $A$. 
 
\begin{lem}
 Let $J \trianglelefteq \Ksm$ and $m \in \N$. Then the following properties hold:
\begin{enumerate}
 \item $J^m = \{ r \in \Ksm : \sqrt[m]{|r|} \in J \}$.
 \item Let $L \trianglelefteq \Ksm$ and $L^m \subseteq J^m$. Then $L \subseteq J$. In particular, if $r \in \Ksm$ and $r^m \in J^m$, then $r \in J$.
 \item $\sqrt{J} = \langle \sqrt[n]{|r|} : n \in \N, r \in J \rangle$, and in particular, for $s \in \Ksm$, $\sqrt{s \Ksm} = \langle \sqrt[n]{|s|} : n \in \N \rangle$.
\end{enumerate}
\end{lem}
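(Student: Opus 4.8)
The plan is to reduce the whole statement to three facts already established: every ideal of $\Rsm$ is convex (Proposition~\ref{convex}); for any ideal $J\unlhd\Ksm$ and $x\in J$ one has $|x|\in J$ (Lemma~\ref{absc}), while conversely $r\in|r|\Ksm$ for every $r\in\Ksm$ (by an argument analogous to the proof of Lemma~\ref{absc}; equivalently, the case $s=0$ of the preceding proposition gives $r\Ksm=|r|\Ksm$), so that $r\in J\Leftrightarrow|r|\in J$; and the existence, via $\Rsm\cong\Rco$ (Theorem~\ref{iso}), of a unique nonnegative $n$-th root $\sqrt[n]{|r|}\in\Rsm$ with $(\sqrt[n]{|r|})^n=|r|$. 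For the last point I would first record that, because the order on $\Rsm$ is tested on representatives (Remark~\ref{order2}) and $\min,\max$ are computed representativewise (Definition~\ref{minmax}), the scalar inequalities for $m$-th roots carry over verbatim: monotonicity, subadditivity $\sqrt[n]{a+b}\le\sqrt[n]{a}+\sqrt[n]{b}$, multiplicativity $\sqrt[n]{ab}=\sqrt[n]{a}\,\sqrt[n]{b}$ (for $a,b\ge0$), and the AM--GM inequality $\big(\prod_{k=1}^m a_k\big)^{1/m}\le\tfrac1m\sum_{k=1}^m a_k$. The complex case ($\Ksm=\Csm$) reduces throughout to the real one by running all convexity arguments inside the ideal $J\cap\Rsm\unlhd\Rsm$.

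For (i), the inclusion ``$\supseteq$'' is immediate: if $\sqrt[m]{|r|}\in J$ then $|r|=(\sqrt[m]{|r|})^m\in J^m$, and since $r\in|r|\Ksm\subseteq J^m$ we get $r\in J^m$. For ``$\subseteq$'', write $r\in J^m$ as $r=\sum_{j=1}^N\prod_{k=1}^m a_{j,k}$ with all $a_{j,k}\in J$. Then the triangle inequality in $\Ksm$ together with the four properties of the $m$-th root above give
\[
 0\le\sqrt[m]{|r|}\le\sqrt[m]{\textstyle\sum_{j}\prod_{k}|a_{j,k}|}\le\sum_{j}\prod_{k}\sqrt[m]{|a_{j,k}|}\le\sum_{j,k}|a_{j,k}|=:c .
\]
By Lemma~\ref{absc} each $|a_{j,k}|\in J$, so $c\in J$, and convexity of $J$ forces $\sqrt[m]{|r|}\in J$.

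Part (ii) then follows at once: for $r\in L$ we have $r^m\in L^m\subseteq J^m$, so by (i) $J\ni\sqrt[m]{|r^m|}=\sqrt[m]{|r|^m}=|r|$, whence $r\in|r|\Ksm\subseteq J$; taking $L=r\Ksm$ (so $L^m=r^m\Ksm$) yields the ``in particular'' clause. For (iii), put $K:=\langle\sqrt[n]{|r|}:n\in\N,\ r\in J\rangle$. Each generator satisfies $(\sqrt[n]{|r|})^n=|r|\in J$, so $\sqrt[n]{|r|}\in\sqrt J$ and hence $K\subseteq\sqrt J$; conversely, if $s\in\sqrt J$, say $s^n\in J$, then $|s|^n=|s^n|\in J$, i.e.\ $|s|=\sqrt[n]{|s^n|}\in K$ and therefore $s\in|s|\Ksm\subseteq K$, giving $\sqrt J=K$. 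Finally, for $J=s\Ksm$ every generator is of the form $\sqrt[n]{|sa|}=\sqrt[n]{|s|}\,\sqrt[n]{|a|}\in\langle\sqrt[n]{|s|}:n\in\N\rangle$, while each $\sqrt[n]{|s|}$ is itself a generator, so $K=\langle\sqrt[n]{|s|}:n\in\N\rangle$.

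I expect the only point requiring genuine care to be the passage from the scalar $m$-th-root inequalities to the order (and lattice) structure of $\Rsm\cong\Rco$ — in particular justifying the displayed chain of inequalities from Remark~\ref{order2} — together with the mild bookkeeping of sums of length-$m$ products in (i) and the identity $(r\Ksm)^m=r^m\Ksm$; everything else is formal commutative ring theory building on Lemma~\ref{absc} and Proposition~\ref{convex}.
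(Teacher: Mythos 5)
Your argument is correct, but it is genuinely different from what the paper does: the paper's proof is a one-line reduction, observing only that extracting roots is continuous and hence an inner operation on $\Kco\cong\Ksm$, and then deferring entirely to \cite[Lem.~3.2]{V10}, which in turn rests on the abstract $f$-ring machinery of \cite[Prop.~8.2.11]{BKW77}. You instead give a self-contained, elementary verification built only on ingredients already proved in the paper: convexity of ideals in $\Rsm$ (Proposition~\ref{convex}), the fact that $x\in J$ implies $|x|\in J$ (Lemma~\ref{absc}) together with its converse $r\in|r|\Ksm$ (which, as you note, is the $s=0$ case of the principal-ideal proposition, so no new construction is needed), and the existence of nonnegative roots in $\Rco\cong\Rsm$. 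The key display in (i) -- bounding $\sqrt[m]{|r|}$ by $\sum_{j,k}|a_{j,k}|\in J$ via the triangle inequality, multiplicativity and subadditivity of roots, and AM--GM, all checked on representatives as permitted by Remark~\ref{order2} -- is exactly the content that \cite[Prop.~8.2.11]{BKW77} encapsulates for absolutely convex ideals in $f$-rings, and your reduction of the complex case to $J\cap\Rsm\unlhd\Rsm$ is sound. What the paper's route buys is brevity and uniformity with \cite{V10}; what yours buys is transparency and independence from the lattice-ordered-ring literature, at the cost of having to verify the scalar root inequalities transfer to the order on $\Rsm$, which you correctly flag as the one point needing care.
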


\begin{proof}
 Extracting roots is a continuous function, and hence is an inner operation in $\Kco$ and therefore $\Ksm$. Thus the proof is identical to the case of arbitrary parametrization by making use of \cite[Prop.~8.2.11]{BKW77}. See \cite[Lem.~3.2]{V10} for details.
\end{proof}
 The idempotent ideals are exactly the radical ideals:

\begin{prop} \label{idra}
 Let $J \trianglelefteq \Ksm$. The following are equivalent:
\begin{enumerate}
 \item $J$ is idempotent.
 \item $J$ is radical.
 \item $\forall r \in J: \sqrt{|r|} \in J$.
 \item $J$ is an intersection of prime ideals.
\end{enumerate}
\end{prop}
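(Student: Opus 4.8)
The plan is to reduce everything to the preceding lemma on powers and radicals of ideals, to Lemma~\ref{absc}, and to the isomorphism $\Ksm\cong\Kco$ of Theorem~\ref{iso}, which guarantees that taking $n$-th roots (indeed, raising to any nonnegative real power) is an inner operation on $\Ksm$. First I would observe that $(i)\Leftrightarrow(iii)$ is immediate: since $J^2\subseteq J$ always and, by the preceding lemma, $J^2=\{r\in\Ksm:\sqrt{|r|}\in J\}$, the equality $J=J^2$ is the same as the inclusion $J\subseteq\{r:\sqrt{|r|}\in J\}$, which is exactly statement $(iii)$. Likewise $(ii)\Leftrightarrow(iv)$ is purely ring-theoretic: $\sqrt J$ equals the intersection of the prime ideals containing $J$ (as recalled above), so $J=\sqrt J$ iff $J$ is such an intersection, while conversely $J=\bigcap_i P_i$ forces $\sqrt J\subseteq\bigcap_i\sqrt{P_i}=\bigcap_i P_i=J\subseteq\sqrt J$.

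It then remains to tie $(ii)$ to the rest, for which I would establish $(i)\Rightarrow(ii)$ and $(ii)\Rightarrow(iii)$, closing the cycle through the biconditional $(iii)\Leftrightarrow(i)$. For $(i)\Rightarrow(ii)$: idempotence gives $J^n=J$ for all $n\ge 1$ by induction, so whenever $r^n\in J=J^n$ the second part of the preceding lemma yields $r\in J$; hence $\sqrt J\subseteq J$ and $J$ is radical. For $(ii)\Rightarrow(iii)$: if $r\in J$ then $|r|\in J$ by Lemma~\ref{absc}, and since $(\sqrt{|r|})^2=|r|\in J$ and $J=\sqrt J$, we conclude $\sqrt{|r|}\in J$. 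Combined with $(i)\Leftrightarrow(iii)$ and $(ii)\Leftrightarrow(iv)$, this establishes the full equivalence.

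I do not anticipate a genuine obstacle: the substantive work has already been absorbed into the earlier lemmas. The only point requiring a moment's care is that the objects being manipulated---$|r|$, $\sqrt{|r|}$, and (should one prefer the alternative route $(iii)\Rightarrow(i)$ via the factorization $r=|r|t$ with $t\in\Ksm$ supplied by the principal-ideal proposition) those factorizations---really live in $\Ksm$ and not merely in $\Kco$, which is precisely what Theorem~\ref{iso} ensures; this is also the reason the equivalence, false in a general commutative ring, does hold for $\Ksm$.
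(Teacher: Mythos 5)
Your proof is correct and follows essentially the same route as the paper, which simply defers to \cite[Prop.~3.3]{V10}: the whole point of the preceding lemma (the analogue of \cite[Lem.~3.2]{V10}) together with Lemma~\ref{absc} and Theorem~\ref{iso} is to make exactly the argument you give --- $(i)\Leftrightarrow(iii)$ from $J^2=\{r:\sqrt{|r|}\in J\}$, $(i)\Rightarrow(ii)$ from the cancellation $r^n\in J^n\Rightarrow r\in J$, $(ii)\Rightarrow(iii)$ via $|r|\in J$, and $(ii)\Leftrightarrow(iv)$ from $\sqrt{J}=\bigcap P$ --- go through verbatim in $\Ksm$. You have merely written out the details that the paper leaves to the citation.
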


\begin{proof}
 Identical to that of \cite[Prop.~3.3.]{V10}.
\end{proof}

 The next result shows, in particular, that the sum and the intersection of a family of radical ideals is again radical (see $(i)$ and $(iv)$).

\begin{prop}
 Let $J_{\lambda} \trianglelefteq \Ksm$ for all $\lambda\in \Lambda$. Then
\begin{enumerate}
 \item $\sqrt{\sum_{\lambda \in \Lambda} J_{\lambda}} = \sum_{\lambda \in \Lambda} \sqrt{J_{\lambda}}$
 \item Let $I, J \trianglelefteq \Ksm$. Then $\sqrt{I} \cap \sqrt{J} = \sqrt{I \cap J}$. 
 \item Let $J \trianglelefteq \Ksm$. Then
$$J^{\sqrt{}} := \bigcap_{n \in \N} J^n = \{ r \in \Ksm \, | \, \forall n \in \N: \sqrt[n]{|r|} \in J \} = \{r \in \Ksm \, | \, \sqrt{r \Ksm} \subseteq J \}$$
 is the largest radical ideal that is contained in $J$. $J$ is radical if and only if $J = J^{\sqrt{}}$. 
 \item $\bigcap_{\lambda \in \Lambda} J_{\lambda}^{\sqrt{}} = \left( \bigcap_{\lambda \in \Lambda} J_{\lambda} \right)^{\sqrt{}}$
\end{enumerate}
\end{prop}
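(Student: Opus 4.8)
The plan is to prove the four parts essentially by reducing everything to the preceding Lemma on powers and radicals and to Proposition~\ref{idra}, handling the only genuinely analytic point, namely the non‑obvious inclusion in $(i)$, with the order structure of $\Rsm$. I would begin with $(ii)$, which is purely ring‑theoretic and needs no special structure: $\sqrt{I\cap J}\subseteq\sqrt I\cap\sqrt J$ because $I\cap J\subseteq I,J$, and conversely if $r^a\in I$ and $r^b\in J$ then $r^{a+b}\in I\cap J$, so $\sqrt I\cap\sqrt J\subseteq\sqrt{I\cap J}$. This goes through verbatim as in \cite[Lem.~3.4]{V10}.

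For $(iii)$ I would first invoke the preceding Lemma, which gives $J^m=\{r\in\Ksm:\sqrt[m]{|r|}\in J\}$ and $\sqrt{s\Ksm}=\langle\sqrt[n]{|s|}:n\in\N\rangle$; intersecting over all $m$ yields at once the three descriptions of $J^{\sqrt{}}$, and $J^{\sqrt{}}=\bigcap_n J^n$ is an ideal as an intersection of ideals. That $J^{\sqrt{}}$ is radical follows from Proposition~\ref{idra}: for $r\in J^{\sqrt{}}$ one has $|\sqrt{|r|}|=\sqrt{|r|}$ and $\sqrt[n]{\sqrt{|r|}}=\sqrt[2n]{|r|}\in J$ for every $n$, so $\sqrt{|r|}\in J^{\sqrt{}}$. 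Since $J^{\sqrt{}}\subseteq J^1=J$, it remains to check maximality: if $L\trianglelefteq\Ksm$ is radical with $L\subseteq J$, then for $r\in L$ each $\sqrt[n]{|r|}$ satisfies $(\sqrt[n]{|r|})^n=|r|\in L$ (using Lemma~\ref{absc}), hence $\sqrt[n]{|r|}\in\sqrt L=L\subseteq J$, so $L\subseteq J^{\sqrt{}}$. The characterization of radical ideals as the fixed points $J=J^{\sqrt{}}$ is then immediate.

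The heart of the proof is $(i)$. The inclusion $\sum_\lambda\sqrt{J_\lambda}\subseteq\sqrt{\sum_\lambda J_\lambda}$ is clear, since the right‑hand side is an ideal containing every $\sqrt{J_\lambda}$. For the converse I would work first in $\Rsm$ (transferring to $\Csm$ at the end via the bijective ideal correspondence of \cite[Sec.~2.4]{V10}) and use the generator description $\sqrt{\sum_\lambda J_\lambda}=\langle\sqrt[n]{|r|}:n\in\N,\ r\in\sum_\lambda J_\lambda\rangle$. Given such $r$, write $r=\sum_i r_i$ with $r_i\in J_{\lambda_i}$; then in $\Rsm$ one has $0\le|r|\le\sum_i|r_i|$, hence $0\le\sqrt[n]{|r|}\le\sqrt[n]{\sum_i|r_i|}\le\sum_i\sqrt[n]{|r_i|}$, where the last inequality is the subadditivity of $t\mapsto t^{1/n}$ on $[0,\infty)$ applied to representatives. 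Since $|r_i|\in J_{\lambda_i}$ (Lemma~\ref{absc}) and $(\sqrt[n]{|r_i|})^n=|r_i|$, we get $\sqrt[n]{|r_i|}\in\sqrt{J_{\lambda_i}}$, so $\sum_i\sqrt[n]{|r_i|}\in\sum_\lambda\sqrt{J_\lambda}$; by convexity of ideals in $\Rsm$ (Proposition~\ref{convex}) the smaller nonnegative element $\sqrt[n]{|r|}$ lies there too, which proves the inclusion. I expect the main obstacle to be exactly this step, since it is where the argument departs from a general commutative ring: one must combine absolute convexity, monotonicity, and subadditivity of $n$‑th roots with the generator description, and the concrete estimates for the roots of continuous representatives (as in the preceding Lemma) have to be set up correctly.

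Finally $(iv)$ is a soft consequence of $(iii)$: $\bigl(\bigcap_\lambda J_\lambda\bigr)^{\sqrt{}}$ is a radical ideal contained in every $J_\lambda$, hence by maximality in $(iii)$ contained in every $J_\lambda^{\sqrt{}}$, so in their intersection; conversely $\bigcap_\lambda J_\lambda^{\sqrt{}}$ is an intersection of radical ideals, hence radical (each radical ideal is an intersection of prime ideals by Proposition~\ref{idra}, so their intersection is again such), and it is contained in $\bigcap_\lambda J_\lambda$, whence $\bigcap_\lambda J_\lambda^{\sqrt{}}\subseteq\bigl(\bigcap_\lambda J_\lambda\bigr)^{\sqrt{}}$ by maximality. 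The parenthetical remark preceding the statement then falls out by specializing $(i)$ and $(iv)$ to a family of radical ideals: if each $J_\lambda$ is radical then $\sqrt{J_\lambda}=J_\lambda$ and $J_\lambda^{\sqrt{}}=J_\lambda$, so $\sqrt{\sum_\lambda J_\lambda}=\sum_\lambda J_\lambda$ and $\bigl(\bigcap_\lambda J_\lambda\bigr)^{\sqrt{}}=\bigcap_\lambda J_\lambda$.
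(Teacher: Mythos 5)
Your proof is correct and is essentially the paper's approach: the paper simply says the statement follows from the preceding results analogously to \cite[Prop.~3.4]{V10}, and your argument is exactly that adaptation, using the same ingredients the paper has assembled (the lemma describing $J^m$ and $\sqrt{J}$ via roots of absolute values, absolute convexity of ideals from Lemma~\ref{absc} and Proposition~\ref{convex}, and the characterization of radical ideals in Proposition~\ref{idra}). The only point worth flagging is that the subadditivity-plus-convexity step in $(i)$ and the transfer from $\Rsm$ to $\Csm$ via the ideal correspondence should be spelled out as you indicate, but both are sound.
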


\begin{proof} Based on the above results, this is analogous to \cite[Prop.~3.4]{V10}. 
\end{proof}

\begin{rem}
 We have seen that many characterizations of ideals in $\Ksm$ can be carried over from $\Kt$. The characterization of prime ideals, however, heavily relies on the structure of $\Kt$ and makes use of the idempotents therein, see \cite[Thm.~3.5, Thm.~3.6]{V10}. Thus a characterization of prime ideals in $\Ksm$ will have to go along different lines.
\end{rem}





\bibliographystyle{amsplain}

\providecommand{\bysame}{\leavevmode\hbox to3em{\hrulefill}\thinspace}
\providecommand{\MR}{\relax\ifhmode\unskip\space\fi MR }
\providecommand{\MRhref}[2]{%
  \href{http://www.ams.org/mathscinet-getitem?mr=#1}{#2}
}
\providecommand{\href}[2]{#2}


\end{document}